\newtheorem{theorem}{Theorem}[section]
\newtheorem{proposition}[theorem]{Proposition}
\newtheorem{lemma}[theorem]{Lemma}
\newtheorem{corollary}[theorem]{Corollary}
\newtheorem{question}[theorem]{Question}
\theoremstyle{definition}
\newtheorem{example}[theorem]{Example}
\newcommand{\wbar}[1]{\overline{#1}}
\newcommand{\what}[1]{\widehat{#1}}
\newcommand{\til}[1]{\tilde{#1}}
\newcommand{\spn}{\operatorname{span}}
\newcommand{\supp}{\operatorname{supp}}
\newcommand{\re}{\operatorname{Re}}
\newcommand{\im}{\operatorname{Im}}
\newcommand{\fA}{\mathcal{A}}
\newcommand{\fB}{\mathcal{B}}
\newcommand{\fC}{\mathcal{C}}
\newcommand{\fD}{\mathcal{D}}
\newcommand{\fE}{\mathcal{E}}
\newcommand{\fF}{\mathcal{F}}
\newcommand{\fH}{\mathcal{H}}
\newcommand{\fM}{\mathcal{M}}
\newcommand{\fU}{\mathcal{U}}
\newcommand{\fa}{\mathfrak{a}}
\newcommand{\fe}{\mathfrak{e}}
\newcommand{\ff}{\mathfrak{f}}
\newcommand{\fg}{\mathfrak{g}}
\newcommand{\fh}{\mathfrak{h}}
\newcommand{\fk}{\mathfrak{k}}
\newcommand{\fm}{\mathfrak{m}}
\newcommand{\fn}{\mathfrak{n}}
\newcommand{\fr}{\mathfrak{r}}
\newcommand{\fs}{\mathfrak{s}}
\newcommand{\fsu}{\mathfrak{su}}
\newcommand{\su}{\mathrm{SU}}
\newcommand{\ad}{\operatorname{ad}}
\newcommand{\diag}{\operatorname{diag}}
\newcommand{\Cee}{\mathbb{C}}
\newcommand{\Hee}{\mathbb{H}}
\newcommand{\Ree}{\mathbb{R}}
\newcommand{\Tee}{\mathbb{T}}
\newcommand{\Zee}{\mathbb{Z}}
\newcommand{\En}{\mathbb{N}}
\newcommand{\Ess}{\mathbb{S}}
\newcommand{\alp}{\alpha}
\newcommand{\del}{\delta}
\newcommand{\Del}{\Delta}
\newcommand{\eps}{\varepsilon}
\newcommand{\gam}{\gamma}
\newcommand{\Gam}{\Gamma}
\newcommand{\lam}{\lambda}
\newcommand{\Lam}{\Lambda}
\newcommand{\sig}{\sigma}
\newcommand{\vphi}{\varphi}
\newcommand{\tr}{\mathrm{Tr}}
\newcommand{\norm}[1]{\left\Vert#1\right\Vert}
\begin{document}

\title[Weak amenability of $A(G)$]
{Weak amenability of Fourier algebras and local synthesis of the anti-diagonal}

\author{Hun Hee Lee, Jean Ludwig, Ebrahim Samei and Nico Spronk}

\begin{abstract}
We show that for a connected Lie group $G$, its Fourier algebra $A(G)$ is weakly amenable
only if $G$ is abelian.
Our main new idea is to show that  weak amenability of $A(G)$ implies that
the anti-diagonal, $\check{\Del}_G=\{(g,g^{-1}):g\in G\}$, is a set of local synthesis for
$A(G\times G)$.  We then show that this cannot happen if $G$ is non-abelian.
We conclude for a locally compact group $G$, that $A(G)$ can be weakly amenable only if
it contains no closed connected non-abelian Lie subgroups. In particular, for a Lie group $G$,
$A(G)$ is weakly amenable if and only if its connected component of the identity $G_e$ is abelian.
\end{abstract}

\maketitle

\footnote{{\it Date}: \today.

2000 {\it Mathematics Subject Classification.} Primary 43A30;
Secondary 43A45, 43A80, 22E15, 22D35, 46H25, 46J40.
{\it Key words and phrases.} Fourier algebra, weak amenability, local synthesis.

The first named author was supported by the Basic Science Research Program through the National Research Foundation of Korea (NRF), grant  NRF-2015R1A2A2A01006882.
The second named author was supported by Institut \'{E}lie Cartan de Lorraine.
The third named author was supported by NSERC Grant 366066-2014, and the
Professeur Invit\'{e} program at Institut \'{E}lie Cartan de Lorraine.
The fourth named was supported by NSERC Grant 312515-2010, and the Korean Brain Pool Program.}


\subsection{Background}  Questions on the nature of bounded derivations on (commutative)
Banach algebras $\fA$ have been around for a long time, in particular vanishing
of bounded Hochschild cohomologies $H^1(\fA,\fM)$ for certain Banach $\fA$-modules $\fM$.
See, for example, \cite{singerw,kamowitz}.  Johnson systematized many
of these questions in \cite{johnsonM}.  In particular, he showed that for a locally compact group $G$,
its group algebra is {\it amenable} (i.e.\ $H^1(L^1(G),\fM^*)=\{0\}$ for each dual module $\fM^*$)
if and only if $G$ is an amenable group.  He also started the problem of determining when
$H^1(L^1(G),L^1(G)^*)=\{0\}$.

For a commutative Banach algebra $\fA$, Bade, Curtis and Dales (\cite{badecd})
introduced the concept of {\it weak amenability}, which is defined as having $H^1(\fA,\fM)=\{0\}$ for all
symmetric Banach modules.  They observed that this is equivalent to having
$H^1(\fA,\fA^*)=\{0\}$.  There is an interesting universal module also exhibited by Runde
(\cite{runde}).  The above observation of \cite{badecd}, leads
us to refer to any Banach algebra $\fB$ as {\it weakly amenable} if $H^1(\fB,\fB^*)=\{0\}$.
Weak amenability was established for all $L^1(G)$ by Johnson (\cite{johnson0}).

The Fourier algebras, $A(G)$, as defined by Eymard (\cite{eymard}), are dual objects
to the group algebras $L^1(G)$ in a sense which generalizes Pontryagin duality.
It was long expected that the amenability properties enjoyed by group algebras
would also extend to Fourier algebras.  Hence it was a surprise when Johnson (\cite{johnson})
showed that $A(G)$ fails to be weakly amenable for any compact simple Lie group,
in particular for $G=\mathrm{SO}(3)$.  This motivated Ruan (\cite{ruan})
to consider the operator space structure $A(G)$ inherits by virtue of being the predual of a von Neuman algebra.  He proved that $A(G)$ is operator amenable if and only if $G$ is amenable.
Operator weak amenability for general $A(G)$ was determined by Spronk (\cite{spronk})
and, independently, by Samei (\cite{samei}).   The question of amenability of $A(G)$ was settled
by Forrest and Runde (\cite{forrestr}): it happens exactly when $G$ is virtually abelian.
They also showed that if the connected component $G_e$ is abelian, then
$A(G)$ is weakly amenable.  The following is suggested.

\begin{question}\label{ques:forrest}
If $A(G)$ is weakly amenable, then must $G_e$ be abelian?
\end{question}

Much progress has been made in answering this question.  Building on work of Plymen
(\cite{plymen}) -- which was written to answer a question in \cite{johnson} -- Forrest, Samei and Spronk
(\cite{forrestss1}) showed that $A(G)$ is not weakly amenable whenever $G$ contains
a non-abelian connected compact subgroup.  Exciting recent progress was made by
Choi and Ghandehari.  In \cite{choig} they show for the affine motion group, and hence
any simply connected semisimple Lie group, and also for the reduced Heisenberg group,
that the Fourier algebra is not weakly amenable.  In \cite{choig1} they used completely different
techniques to show the same for Heisenberg groups.  Our main theorem generalizes all of these
results.

Let us briefly review the history of ideas around spectral synthesis.  All concepts below will be defined
in Section \ref{ssec:WALSAD}.  The study of sets of spectral
synthesis, or, for us, simply ``synthesis", for $A(G)$, especially for abelian $G$, has a long history.
See, for example, the historical notes in \cite[\S 42]{hewittrII}.  Herz (\cite{herz}) appears to have been the
first author to consider local synthesis for general $G$.  There, he proved this property is enjoyed
by closed subgroups.  Herz's work has inspired,  in part, the injection theorem of
Lohou\'{e} (\cite{lohoue}) and has motived aspects of the work of Ludwig and Turowska (\cite{ludwigt0}).
Thanks to the existence of a bounded approximate identity consisting of compactly supported functions
(\cite{leptin}), sets of local synthesis are sets of synthesis when $G$ is amenable.
Weak synthesis has its origins in work of Varopoulos (\cite{varopoulos}) on spheres in $\Ree^n$
($n\geq 3$), was used in Kirsch and M\"{u}ller (\cite{kirsch}),  and was formalized by
Warner (\cite{warner}).  The first explicit mention of what we call smooth synthesis is by
M\"{u}ller (\cite{muller}), which was applied to certain manifolds in $\Ree^n$.  The first
use of this concept for non-abelian $G$ is due to Ludwig and Turowska (\cite{ludwigt}).
Following their work, Park and Samei (\cite{parks}) showed that for a connected Lie group $G$,
the anti diagonal $\check{\Del}_G$ is a set of smooth synthesis, and also of weak
synthesis for $A(G\times G)$.

For compact $G$, building on work of Gr{\o}nb{\ae}k (\cite{groenbaek}), the article \cite{johnson} used
the failure of a weak form of spectral synthesis of the diagonal $\Del_G=\{(g,g):g\in G\}$
for the projective tensor product algebra $A(G)\hat{\otimes}A(G)$, to obtain the failure of weak amenability.
The local synthesis of $\Del_G$ for general locally compact $G$
was used by Samei (\cite{samei1}) to study a property
which implies weak amenability of $A(G)$.
Returning to compact groups, the ideas of \cite{johnson} were formalized
and capitalized upon in \cite{forrestss2}.  These were used in  \cite{forrestss1}
to show that for compact $G$, $A(G)$ is weakly amenable exactly when
the anti-diagonal $\check{\Del}_G=\{(g,g^{-1}):g\in G\}$ is a set of synthesis
for $A(G\times G)$. For groups containing open subgroups
products of abelian groups and compact groups, i.e.\ $G\supseteq H\cong A\times K$,
this last result was extended to local synthesis.
We recall that $A(G)\hat{\otimes}A(G)\not=A(G\times G)$,
generally (\cite{losert}).   Hence we do not expect obvious connections
between sets of local synthesis for these two algebras.

\subsection{Structure.}  
The starting point for the present investigation lies in the aforementioned results
of \cite{parks}.  Let $G$ be a connected Lie group.
We use the fact that $\check{\Del}_G$ is simultaneously of weak and smooth synthesis for
$A(G\times G)$, along with the characterization of weak amenability
of \cite{runde}, to show that for a connected Lie group, weak amenability
of $A(G)$ implies that $\check{\Del}_G$ is a set of local synthesis for $A(G\times G)$.
The techniques rely intrinsically on the Lie theory, especially having finite dimension for $G$.
We see no way, at present, to extend them to arbitrary connected locally compact groups.

In Section \ref{sec:main}, we show for any connected Lie $G$ that that weak amenability of $A(G)$
implies local synthesis of $\check{\Del}_G$ for $A(G\times G)$ -- a property
we shall hereafter call ``local synthesis for $G\times G$''.
Also in that section we discuss our two main functorial properties satisfied by local synthesis
of $\check{\Del}_G$ for $G\times G$ for locally comapct $G$: the restriction to a closed connected Lie subgroup
and an injection theorem with quotients by discrete normal subgroups.
In section \ref{sec:fivegroups} we give a criterion for testing local synthesis of the anti-diagonal, and
we show for five (classes of) low-dimensional Lie groups that this criterion is satisfied.
In Section \ref{sec:final} we tie the investigation together by noting that
any non-abelian connected Lie group contains one of the five aforementioned groups or its simply
connected covering group, and use the functorial properties to draw our conclusion for connected $G$.
We can thus answer Question \ref{ques:forrest} affirmatively for all Lie groups. 

\subsection{Basic notation.}  Let $G$ be a locally compact group.
The following spaces will be used in this note:  the space $\fC_c(G)$ of compactly
supported continuous functions; and the $L^p$-spaces with respect the
left Haar measure, $L^p(G)$, $p=1,2,\infty$.

We follow Eymard (\cite{eymard}) for all definitions and concepts around the
Fourier algebra $A(G)$.  We recall that $A(G)$ consists of all matrix coefficients
$u(g)=\langle\lam(g)\xi|\eta\rangle$ where $\lam:G\to \fU(L^2(G))$ is the left
regular representation, and $\xi,\eta\in L^2(G)$.  Furthermore, the norm is given by
$\norm{u}_A=\inf\{\norm{\xi}_2\norm{\eta}_2:u=\langle\lam(\cdot)\xi|\eta\rangle\text{ as above}\}$.
The bounded linear dual
is given by the group von Neuman algebra $VN(G)=\lam(G)''\subset\fB(L^2(G))$.
If $u=\langle\lam(\cdot)\xi|\eta\rangle\in A(G)$ and $T\in VN(G)$ we write
$T(u)=\langle T\xi|\eta\rangle$.
We shall denote the operator norm of an element $S$ of $VN(G)$ by $\norm{S}_{VN}$.
We recall that the maps $u\mapsto\check{u}$ ($\check{u}(g)
=u(g^{-1})$) and maps of left and right translation are all isometric automorphisms
of $A(G)$.  We always let $A(G)\hat{\otimes}A(G)$
denote the projective tensor product.  We shall have no need
for completely bounded maps, and will make no use of the operator projective tensor product,
except, of course, implicitly, when we discuss $A(G\times G)$.

We shall define more specialized notions in situ as their necessities arise.

\section{Weak amenability and local synthesis for the anti-diagonal}\label{sec:main}

\subsection{Weak amenability implies local synthesis for the anti-diagonal}\label{ssec:WALSAD}
Let $G$ be a locally compact group.  We let $A_c(G)$ denote the subalgebra of elements
$u$ of $A(G)$ for which $\supp u=\wbar{\{g\in G:u(g)\not=0\}}$ is compact.
It is well known that the {\it Tauberian condition} holds:  $A_c(G)$ is dense in $A(G)$; and
that $A(G)$ is {\it regular} on $G$:  given compact $K$ and a neighbourhood $U$ of $K$,
there is $u$ in $A_c(G)$ for which $u|_K=1$ and $\supp u\subset U$.
See \cite[(3.38) and (3.2)]{eymard}.

Given a closed subset $E$ of $G$ we let
\begin{align*}
I_G(E)&=\{u\in\ A(G):u|_E=0\} \\
I_G^0(E)&=\{u\in\ A_c(G):\supp u\cap E=\varnothing\}\text{, and}\\
J_G(E)&=\wbar{I_G(E)\cap A_c(G)}.
\end{align*}
It is evident that $\wbar{I_G^0(E)}\subseteq J_G(E)\subseteq I_G(E)$.  We say
that $E$ is a set of

$\bullet$ {\it spectral synthesis} for $G$ if $\wbar{I_G^0(E)}= I_G(E)$; and

$\bullet$ {\it local synthesis} for $G$ if $\wbar{I_G^0(E)}= J_G(E)$.

\noindent
For a linear subspace $S\subset A(G)$  we let $S^{(d)}=\spn\{u^d:u\in S\}$.  We say that $E$ is a set of

$\bullet$ {\it weak synthesis} for $G$ if there is $d$ in $\En$ for which
$I_G(E)^{(d)}\subseteq\wbar{I_G^0(E)}$; and

$\bullet$ {\it local weak synthesis} for $G$ if there is $d$ in $\En$ for which
$J_G(E)^{(d)}\subseteq\wbar{I_G^0(E)}$.

We now let $G$ be a connected Lie group  and
let $\fD(G)$ denote the space of compactly supported smooth functions on $G$.
If $K$ is a compact subset of $G$ we let $\fD_K(G)=\{u\in\fD(G):\supp u\subset K\}$,
which is a Fr\'{e}chet space.  For example, we fix a basis $\beta=(X_1,\dots,X_d)$ for the Lie
algebra $\fg$ of $G$ and for $u,v$ in $\fD_K(G)$ set
\begin{gather}
\partial_{X}u(g)=\left.\frac{d}{dt}u(g\exp(tX))\right|_{t=0},\text{ for }X\text{ in }\fg \label{eq:der} \\
\rho_{\beta,n}^K(u)=\sum_{1\leq i_1,\dots,i_n\leq d}
\norm{ \partial_{X_{i_1}}\dots\partial_{X_{i_n}}u }_\infty \notag \\
\text{and }\rho_\beta^K(u,v)=\sum_{n=0}^\infty\frac{\rho_{\beta,n}^K(u-v)}{2^n+\rho_{\beta,n}^K(u-v)}.\notag
\end{gather}
Then given a compact neighbourhood $K$ of the identity,
$\fD(G)=\bigcup_{n=1}^\infty \fD_{K^n}(G)$
is an inductive limit of Fr\'{e}chet spaces, a so called {\it LF-space}.  See, for example, \cite{treves}.
It follows from \cite[(3.26)]{eymard} or
\cite[Lemma 3.3 and Remark 4.2]{ludwigt}, that $\fD(G)\subset A_c(G)$,
and each inclusion $\fD_K(G)\hookrightarrow A(G)$ is continuous.  Furthermore, since
$\fD(G)$ is dense in $L^2(G)$, $\fD(G)\supseteq \fD(G)\ast\fD(G)$ and
$\{\check{u}:u\in\fD(G)\}=\fD(G)$, $\fD(G)$
is dense in $A(G)$.

For a closed subset $E$ of $G$ we let
\[
J^\fD_G(E)=\wbar{\fD(G)\cap I_G(E)}
\]
where closure is in $A(G)$.  Since $\fD(G)$ is dense in $A(G)$, and $\fD(G)J^\fD_G(E)\subseteq
J^\fD_G(E)$, we have that $J^\fD_G(E)$ is an ideal in $A(G)$.  Since $A(G)$ is regular and Tauberian
we have inclusions $I^0_G(E)\subseteq J^\fD_G(E)\subseteq J_G(E)\subseteq I_G(E)$.
We say that $E$ is a set of

$\bullet$ {\it smooth synthesis} for $G$ if $J^\fD_G(E)= I_G(E)$; and

$\bullet$ {\it local smooth synthesis} for $G$ if $J_G^\fD(E)= J_G(E)$.

\noindent The projective tensor product $\fE\hat{\otimes}\fF$ of two locally convex spaces $\fE$ and $\fF$
is the completion of the algebraic tensor product $\fE\otimes\fF$ in the final topology
with respect to the embedding $\fE\times\fF\hookrightarrow\fE\otimes\fF$.  The standard
proof of the following,  for $G=\Ree^d$,
uses techniques of Fourier analysis on Schwarz class functions.  Hence we need
to take a little care to see that it holds in our setting, though the proof is standard.

\begin{lemma}\label{lem:tptf}
Let $G$ be a connected Lie group and $K$ and $M$ be compact subsets of
$G$.  Then $\fD_K(G)\hat{\otimes}\fD_M(G)\cong\fD_{K\times M}(G\times G)$ linearly
and homeomorphically.
\end{lemma}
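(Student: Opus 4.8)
The plan is to verify that the canonical bilinear map is a topological isomorphism, handling the two inclusions of topologies separately and reducing the substantive content to the classical Euclidean statement. First I would set up the natural map: for $u\in\fD_K(G)$ and $v\in\fD_M(G)$ let $u\otimes v$ denote the function $(g,h)\mapsto u(g)v(h)$ on $G\times G$, which lies in $\fD_{K\times M}(G\times G)$. This defines a bilinear map $\fD_K(G)\times\fD_M(G)\to\fD_{K\times M}(G\times G)$ and hence a linear map $\Phi$ on the algebraic tensor product. To see that $\Phi$ is continuous for the projective topology, I would use the product basis $\beta'=((X_1,0),\dots,(X_d,0),(0,X_1),\dots,(0,X_d))$ for the Lie algebra $\fg\oplus\fg$ of $G\times G$. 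Since the left-invariant fields in the two slots commute, and $\partial_{(X,0)}$ (respectively $\partial_{(0,X)}$) differentiates $u\otimes v$ only in the first (respectively second) variable, every monomial of length $n$ in $\beta'$ applied to $u\otimes v$ factors as a product $(\partial_{X_{i_1}}\cdots u)\otimes(\partial_{X_{j_1}}\cdots v)$. Taking sup norms and summing over monomials yields an estimate $\rho^{K\times M}_{\beta',n}(u\otimes v)\le C_n\,P_n(u)Q_n(v)$ with $P_n,Q_n$ continuous seminorms on $\fD_K(G)$ and $\fD_M(G)$. This is exactly the criterion for $\Phi$ to extend to a continuous map $\hat{\otimes}\colon\hat\Phi\colon\fD_K(G)\hat{\otimes}\fD_M(G)\to\fD_{K\times M}(G\times G)$.

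The remaining and harder task is to show that $\hat\Phi$ is a bijection with continuous inverse, and the plan is to reduce this to the case $G=\Ree^d$, where the statement is classical and rests on the nuclearity of $\fD_K(\Ree^d)$ (see \cite{treves}). I would cover $K$ by finitely many relatively compact coordinate charts $\kappa_i\colon U_i\to\Ree^d$ and choose $\phi_i\in\fD(U_i)$ with $\sum_i\phi_i=1$ on a neighbourhood of $K$, and similarly choose charts and $\psi_j\in\fD(V_j)$ for $M$. The maps $u\mapsto(\phi_i u)_i$ and $(\eta_i)_i\mapsto\sum_i\eta_i$, read off in the charts, exhibit $\fD_K(G)$ as a complemented subspace of a finite direct sum $\bigoplus_i\fD_{L_i}(\Ree^d)$, with $L_i=\kappa_i(\supp\phi_i)$; likewise for $\fD_M(G)$ (with compacts $N_j$) and for $\fD_{K\times M}(G\times G)$, the latter using the product partition $\{\phi_i\otimes\psi_j\}$ subordinate to the product charts $U_i\times V_j$. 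Since the projective tensor product commutes with finite direct sums and with the formation of complemented subspaces, and since $\fD_{L_i}(\Ree^d)\hat{\otimes}\fD_{N_j}(\Ree^d)\cong\fD_{L_i\times N_j}(\Ree^{2d})$ by the Euclidean case, one assembles a copy of $\bigoplus_{i,j}\fD_{L_i\times N_j}(\Ree^{2d})$ at both ends. A direct computation shows that under these identifications $\hat\Phi$ corresponds to the direct sum of the Euclidean isomorphisms---indeed $(\phi_i u)\otimes(\psi_j v)=(\phi_i\otimes\psi_j)(u\otimes v)$---so $\hat\Phi$ is itself a topological isomorphism.

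The main obstacle, and the point at which care is required in passing from $\Ree^d$ to a Lie group, is the verification that the chart pull-backs are topological isomorphisms $\fD_L(U_i)\cong\fD_{\kappa_i(L)}(\Ree^d)$ of Fr\'{e}chet spaces. This amounts to checking that the intrinsic seminorms $\rho^K_{\beta,n}$, built from the left-invariant derivatives $\partial_X$, are equivalent on each $\fD_L$ to the seminorms built from coordinate derivatives: the $\partial_X$ are smooth vector fields whose expression in a chart has smooth coefficients, so the chain rule expresses either family of seminorms through the other up to multiplication by functions bounded, with all their derivatives, on the relevant compact support. Once this equivalence and the naturality of the partition-of-unity decomposition are in hand, the bookkeeping of the previous paragraph is routine and the lemma follows. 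Conceptually the whole argument is underwritten by the nuclearity of $\fD_K(G)$, which is what forces the projective and injective tensor topologies to coincide and makes $\fD_{K\times M}(G\times G)$ the completed tensor product.
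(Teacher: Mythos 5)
Your proof is correct and follows essentially the same route as the paper's: localize $K$ and $M$ into finitely many coordinate charts with subordinate smooth partitions of unity, invoke the Euclidean case $\fD_{L}(\Ree^d)\hat{\otimes}\fD_{N}(\Ree^d)\cong\fD_{L\times N}(\Ree^{2d})$ from Treves, and reassemble using nuclearity and the functorial behaviour of the projective tensor product. Your additional care---verifying continuity of the canonical map via the left-invariant seminorms, and checking that chart pull-backs are Fr\'{e}chet isomorphisms---only makes explicit steps the paper treats as routine, so the substance is the same.
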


\begin{proof}
Let us first assume that there is a neighbourhood $U$ of $K$ for which there
is a diffeomorphism $\vphi:U\to U'\subset\Ree^d$.  Then the map
$u\mapsto u\circ\vphi:\fD_K(G)\to\fD_{\vphi(K)}(\Ree^d)$
is a linear homeomorphism.  The same fact holds for $M$,
with a neighbourhood $V$ of $M$ and a diffeomorphism $\psi:V\to V'\subset\Ree^n$.
Thus we get linear homeomorphisms
\[
\fD_K(G)\hat{\otimes}\fD_M(G)\cong\fD_{\vphi(K)}(\Ree^d)\hat{\otimes}\fD_{\psi(M)}(\Ree^d)
\cong \fD_{\vphi(K)\times\psi(M)}(\Ree^{2d})\cong\fD_{K\times M}(G\times G)
\]
where the middle identification is provided by \cite[Theorem 51.6]{treves}, and the last
one by the map $w\mapsto w\circ(\vphi^{-1}\times\psi^{-1})$.

Generally, there are finite open covers $\{U_i\}_{i=1}^m$ of $K$ and
$\{V_j\}_{j=1}^n$ of $M$, such that each member is diffeomorphic to an open subset of $\Ree^n$.
Let $\{u_i\}_{i=1}^m,\{v_j\}_{j=1}^n$ in $\fD(G)$ be smooth partitions of unity, subordinate
to the respective covers.  Then we obtain, for example, a linear homeomorphism
$u\mapsto \sum_{i=1}^n uu_i:\fD_K(G)\to\sum_{i=1}^n\fD_{\wbar{K\cap U_i}}(G)$, whose
inverse is mere inclusion.  Then we obtain linear homeomorphisms
\[
\left(\bigoplus_{i=1}^m\fD_{\wbar{K\cap U_i}}(G)\right)\hat{\otimes}
\left(\bigoplus_{j=1}^n\fD_{\wbar{M\cap V_j}}(G)\right)\cong
\bigoplus_{i=1}^m\bigoplus_{j=1}^n\fD_{\wbar{K\cap U_i}\times\wbar{M\cap V_j}}(G\times G)
\]
as above.  Hence we obtain linear homeomorphisms
\begin{align*}
\fD_K(G)\hat{\otimes}\fD_M(G)&\cong\sum_{i=1}^m\sum_{j=1}^n
\fD_{\wbar{K\cap U_i}}(G)\hat{\otimes}\fD_{\wbar{M\cap V_j}}(G) \\
&\cong\sum_{i=1}^m\sum_{j=1}^n\fD_{\wbar{K\cap U_i}\times\wbar{M\cap V_j}}(G\times G)
\cong \fD_{K\times M}(G\times G)
\end{align*}
by using both injectivity and projectivity of tensor product of these nuclear spaces.
\end{proof}

We let $\check{\Del}_G=\{(g,g^{-1}):g\in G\}\subset G\times G$.

\begin{lemma}\label{lem:algtensapprox}
Let $G$ be a connected Lie group.  Then
\[
(A_c(G)\otimes A_c(G))\cap J_{G\times G}(\check{\Del}_G)
\]
is dense in $J_{G\times G}(\check{\Del}_G)$.
\end{lemma}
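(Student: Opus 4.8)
The plan is to reduce the statement to a local, coordinate-based approximation and then to dispose of the vanishing condition by Hadamard's lemma. First I would record two simplifications. Since every element of $A_c(G)\otimes A_c(G)$ has compact support, a sum of elementary tensors that vanishes on $\check{\Del}_G$ already lies in $I_{G\times G}(\check{\Del}_G)\cap A_c(G\times G)\subseteq J_{G\times G}(\check{\Del}_G)$, whence $(A_c(G)\otimes A_c(G))\cap J_{G\times G}(\check{\Del}_G)=(A_c(G)\otimes A_c(G))\cap I_{G\times G}(\check{\Del}_G)$. Next, by the smooth synthesis of $\check{\Del}_G$ established by Park and Samei (\cite{parks}), i.e.\ $J^\fD_{G\times G}(\check{\Del}_G)=I_{G\times G}(\check{\Del}_G)$, together with the inclusions $J^\fD_{G\times G}(\check{\Del}_G)\subseteq J_{G\times G}(\check{\Del}_G)\subseteq I_{G\times G}(\check{\Del}_G)$, we get $J_{G\times G}(\check{\Del}_G)=\wbar{\fD(G\times G)\cap I_{G\times G}(\check{\Del}_G)}$. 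Hence it suffices to approximate, in the $A(G\times G)$-norm, an arbitrary $w\in\fD(G\times G)$ with $w|_{\check{\Del}_G}=0$ by finite sums of elementary tensors in $\fD(G)\otimes\fD(G)$ that vanish on $\check{\Del}_G$; since each inclusion $\fD_K(G\times G)\hookrightarrow A(G\times G)$ is continuous, I may carry out all estimates in the LF-topology of $\fD(G\times G)$.

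With $w$ as above I would cover the compact set $\supp w$ by finitely many product charts $U_i\times V_i$, with $U_i,V_i$ coordinate neighbourhoods in $G$, and take a smooth partition of unity $\{\chi_i\}$ on $G\times G$ subordinate to this cover, so that $w=\sum_i\chi_i w$ with each $\chi_i w\in\fD(G\times G)$ supported in $U_i\times V_i$ and vanishing on $\check{\Del}_G$. A finite sum of elementary tensors vanishing on $\check{\Del}_G$ is again one, so it is enough to treat a single piece $w_i:=\chi_i w$. If $(U_i\times V_i)\cap\check{\Del}_G=\varnothing$, then $w_i$ vanishes on $\check{\Del}_G$ trivially, and any approximation of $w_i$ by elements of $\fD(G)\otimes\fD(G)$ furnished by Lemma \ref{lem:tptf} has support in a fixed compact subset of $U_i\times V_i$, hence again misses $\check{\Del}_G$; these approximants already lie in $(\fD(G)\otimes\fD(G))\cap I_{G\times G}(\check{\Del}_G)$.

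The essential case is a chart $U\times V$ meeting the anti-diagonal, at a point $(g_0,g_0^{-1})$ with $g_0\in U$ and $g_0^{-1}\in V$; shrinking $U$ I may assume $U^{-1}\subseteq V$. Writing $x:U\to\Ree^d$ and $y:V\to\Ree^d$ for the coordinate maps and $\iota$ for the inversion $g\mapsto g^{-1}$ expressed in these coordinates, the product chart $x\times y$ identifies $\check{\Del}_G\cap(U\times V)$ with the graph $\Gam=\{(s,t):t=\iota(s)\}$, inversion being a diffeomorphism. In these coordinates $w_i$ becomes $W\in\fD(\Ree^{2d})$ vanishing on $\Gam$, and Hadamard's lemma in the transverse direction yields smooth functions with $W(s,t)=\sum_{k=1}^d(t_k-\iota_k(s))W_k(s,t)$; multiplying by a cutoff equal to $1$ on $\supp W$ I may take each $W_k\in\fD(\Ree^{2d})$. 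Approximating each $W_k$ in the LF-topology by finite sums $\phi_k^{(n)}=\sum_l a_l\otimes b_l\in\fD(\Ree^d)\otimes\fD(\Ree^d)$ via Lemma \ref{lem:tptf}, I set $W^{(n)}:=\sum_{k=1}^d(t_k-\iota_k(s))\phi_k^{(n)}(s,t)$. Since $\iota_k$ is smooth on $x(U)$ (because $U^{-1}\subseteq V$), each summand expands as $a_l\otimes(t_k b_l)-(\iota_k a_l)\otimes b_l$ with $a_l,\,t_k b_l,\,\iota_k a_l,\,b_l\in\fD(\Ree^d)$, so $W^{(n)}\in\fD(\Ree^d)\otimes\fD(\Ree^d)$; it vanishes on $\Gam$ because every summand carries a factor $t_k-\iota_k(s)$, and $W^{(n)}\to W$ in the LF-topology. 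Pulling back through $x\times y$ turns $W^{(n)}$ into an element of $(\fD(G)\otimes\fD(G))\cap I_{G\times G}(\check{\Del}_G)$ converging to $w_i$ in $A(G\times G)$-norm, which finishes the piece and hence the lemma.

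The main obstacle, and the reason a naive tensor approximation cannot preserve the vanishing condition, is that the defining map $(g,h)\mapsto gh$ of the anti-diagonal entangles the two variables and is not expressible through elementary tensors. The device that overcomes this is the passage to local graph coordinates: there $\check{\Del}_G$ is cut out by the transverse functions $t_k-\iota_k(s)$, each a difference of a function of $t$ alone and a function of $s$ alone, hence a sum of two elementary tensors. Hadamard's lemma then concentrates the entire vanishing requirement into these tensor-expressible factors, leaving smooth cofactors that may be freely approximated by elements of $\fD(G)\otimes\fD(G)$.
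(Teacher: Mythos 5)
Your proof is correct, but it takes a genuinely different route from the paper's. Both arguments start identically: the Park--Samei theorem reduces the problem to approximating $u\in\fD(G\times G)\cap I_{G\times G}(\check{\Del}_G)$, and both rely on Lemma \ref{lem:tptf}. But where you localize, the paper stays global. It picks a \emph{symmetric} compact $M$ with $u\in\fD_{M\times M}(G\times G)\cong\fD_M(G)\hat{\otimes}\fD_M(G)$, takes an arbitrary algebraic tensor approximant $v\in\fD_M(G)\otimes\fD_M(G)$ (which need not vanish on $\check{\Del}_G$), and then repairs the vanishing in one stroke: with $\Lam w(g)=w(g,g^{-1})$, a continuous map $\fD_{M\times M}(G\times G)\to\fD_M(G)$, and $v_M\in A_c(G)$ equal to $1$ on $M$, the corrected element $w=v-\Lam(v)\otimes v_M$ lies in $A_c(G)\otimes A_c(G)$, vanishes on $\check{\Del}_G$ (symmetry $M=M^{-1}$ enters here), and is close to $u$ because $\Lam(u)=0$ and $\Lam$ is continuous. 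In other words, the paper exploits the fact that $\check{\Del}_G$ is \emph{globally} the graph of inversion, so a single elementary-tensor correction suffices --- no charts, no Hadamard lemma. Your local argument (partition of unity into product charts, graph coordinates, Hadamard factorization $W=\sum_k(t_k-\iota_k(s))W_k$, tensor approximation of the cofactors) reaches the same conclusion, is somewhat stronger in that it produces approximants in $\fD(G)\otimes\fD(G)$, and would adapt to submanifolds that are only locally graphs; the paper's trick, by contrast, is tied to the global graph structure but is much shorter. The price of your route is a layer of technical care that you partly gloss: for $\iota_k a_l$ to be defined and smooth, the left tensor factors $a_l$ must be supported inside $x(U)$, which requires taking the cutoff in Hadamard's lemma to be a product cutoff $\psi_1\otimes\psi_2$ with $\supp\psi_1\subseteq x(U)$ and $\supp\psi_2\subseteq y(V)$ and applying Lemma \ref{lem:tptf} with the corresponding product support; likewise, the chart properties you need ($U^{-1}\subseteq V$ at anti-diagonal points, $U_i\times V_i$ disjoint from $\check{\Del}_G$ elsewhere) should be arranged when the cover is first chosen. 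These are routine repairs, not gaps.
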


\begin{proof}
For simplicity, we let $A=A(G\times G)$. Let $u\in J_{G\times G}(\check{\Del}_G)$.  It was shown in \cite[Theorem 7]{parks}
that $\check{\Del}_G$ is a set of local smooth synthesis, so we may assume that $u\in\fD(G\times G)$.
Hence we can find a compact subset $M$ of $G$ for which $u\in\fD_{M\times M}(G\times G)$.
We can further assume that $M$ is symmetric:  $M^{-1}=M$.  Lemma \ref{lem:tptf} provides
that $u\in\fD_M(G)\hat{\otimes}\fD_M(G)$.

Fix $\eps>0$.
Let $\rho$ be any invariant metric on $\fD_{M\times M}(G\times G)$ which comes form the Fr\'{e}chet
structure.  There is a $\del_1>0$ for which
\[
\rho(w,0)< \del_1 \quad\Rightarrow\quad \norm{w}_A<\eps
\]
for $w\in \fD_{M\times M}(G\times G)\subset A(G\times G)$.  Furthermore, it is straightforward to check that the map
\[
\Lam:\fD_{M\times M}(G\times G)\to\fD_M(G)\text{ given by }\Lam w(g)=w(g,g^{-1})
\]
is continuous since $\check{\Del}_G$, qua submanifold of $G\times G$, is diffeomorphic to $G$.
If we fix $v_M$ in $A_c(G)$ for which $v_M|_M=1$, there is $\del_2>0$ for which
\[
\rho(w,0)< \del_2\quad\Rightarrow\quad \norm{\Lam(w)}_{A(G)}<\frac{\eps}{\norm{v_M}_{A(G)}}.
\]
Thus, if $\del=\min\{\del_1,\del_2\}$, then given $u$ as in the paragraph above,
there is a $v$ in the algebraic tensor product $\fD_M(G)\otimes\fD_M(G)$ for which
$\rho(u,v)<\del$, and hence our choice of $\del$ entails that
\[
\norm{u-v}_A<\eps\text{ and }
\norm{\Lam(u-v)}_{A(G)}<\frac{\eps}{\norm{v_M}_{A(G)}}.
\]
Now let $w=v-\Lam(v)\otimes v_M$ which is an element of $A_c(G)\otimes A_c(G)$.
Since $M=M^{-1}$ it is easy to see that $w|_{\check{\Del}_G}=0$.  Moreover, notice
that $\Lam(u)=0$, as $u\in J^\fD_{G\times G}(\check{\Del}_G)$, so
\[
\norm{u-w}_A\leq \norm{u-v}_A+\norm{\Lam(u-v)\otimes v_M}_{A}
< 2\eps
\]
since $\norm{\Lam(u-v)\otimes v_M}_{A}\leq\norm{\Lam(u-v)}_{A(G)}\norm{v_M}_{A(G)}$.
\end{proof}

We let $A(G)^\sharp$ denote the unitization of $A(G)$ and 
$m^\sharp:A(G)^\sharp\hat{\otimes}A(G)^\sharp\to A(G)^\sharp$ and
$m:A(G)\hat{\otimes}A(G)\to A(G)$ be the continuous linearization of the respective
multiplication maps.  Since $A(G)$ is Tauberian and regular, 
$\wbar{A(G)^2}=A(G)$.  Hence
It follows from \cite[Theorem 3.2]{groenbaek}  that $A(G)$ is weakly amenable if and only if
\begin{equation}\label{eq:Kideal}
\wbar{(\ker m)^2}=\wbar{A(G)\hat{\otimes}A(G)\cdot \ker m^\sharp}.
\end{equation}
Above, and in what follows, we use for a subspace $S$ of an algebra $A$, the notation
$S^d=\spn\{u_1\dots u_d:u_1,\dots,u_d\in S\}$.

\begin{theorem}\label{theo:wassad}
Let $G$ be a connected Lie group.  If $A(G)$ is weakly amenable, then
$\check{\Del}_G$ is a set of local synthesis for $A(G\times G)$.
\end{theorem}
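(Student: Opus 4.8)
The plan is to establish the nontrivial inclusion $J_{G\times G}(\check{\Del}_G)\subseteq\wbar{I^0_{G\times G}(\check{\Del}_G)}$, the reverse being automatic, and to do so by transporting the idempotency condition \eqref{eq:Kideal} from the projective tensor algebra $A(G)\hat{\otimes}A(G)$ to $A(G\times G)$. By Lemma \ref{lem:algtensapprox} and the closedness of $\wbar{I^0_{G\times G}(\check{\Del}_G)}$, it suffices to show that every $w\in(A_c(G)\otimes A_c(G))\cap J_{G\times G}(\check{\Del}_G)$ lies in $\wbar{I^0_{G\times G}(\check{\Del}_G)}$. Fix such a $w=\sum_i u_i\otimes v_i$; regarded as a function on $G\times G$ it vanishes on $\check{\Del}_G$, which says exactly that $\sum_i u_i\check{v}_i=0$ in $A(G)$.

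First I would pass to the diagonal picture via the flip. Let $\Phi$ be the automorphism of $A(G)\hat{\otimes}A(G)$ determined by $\Phi(u\otimes v)=u\otimes\check{v}$; since $v\mapsto\check{v}$ is an isometric automorphism of $A(G)$, $\Phi$ is an isometric algebra automorphism, and $m\circ\Phi$ is restriction to the anti-diagonal. Thus $\til{w}:=\Phi(w)=\sum_i u_i\otimes\check{v}_i$ satisfies $m(\til{w})=\sum_i u_i\check{v}_i=0$, i.e.\ $\til{w}\in\ker m\cap(A_c(G)\otimes A_c(G))$. Choosing, by regularity of $A(G)$, an $f\in A_c(G)$ with $f=1$ on $\bigcup_i\supp\check{v}_i$, one checks that $\til{w}=\sum_i(u_i\otimes f)(1\otimes\check{v}_i-\check{v}_i\otimes 1)$, which exhibits $\til{w}\in A(G)\hat{\otimes}A(G)\cdot\ker m^\sharp$. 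Now weak amenability enters: by \eqref{eq:Kideal}, whose right-hand side is $\ker m$ by the standard computation available since $A(G)$ is regular and Tauberian, we obtain the idempotency $\wbar{(\ker m)^2}=\ker m$, and a routine induction using joint continuity of the product upgrades this to $\wbar{(\ker m)^n}=\ker m$ for every $n\in\En$. In particular $\til{w}\in\wbar{(\ker m)^n}$ for all $n$.

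Next I would bring in the synthesis input of \cite{parks}: $\check{\Del}_G$ is a set of weak synthesis for $A(G\times G)$, so there is a $d\in\En$ with $I_{G\times G}(\check{\Del}_G)^{(d)}\subseteq\wbar{I^0_{G\times G}(\check{\Del}_G)}$, and by polarization $I_{G\times G}(\check{\Del}_G)^{(d)}=I_{G\times G}(\check{\Del}_G)^d$. For this $d$ pick $\til{w}_k\in(\ker m)^d$ with $\til{w}_k\to\til{w}$. Applying $\Phi$ yields $w_k:=\Phi(\til{w}_k)\to w$ with $w_k\in(\Phi(\ker m))^d$, a $d$-fold product of elements of $A(G)\hat{\otimes}A(G)$ vanishing on $\check{\Del}_G$. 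Finally let $\Gamma\colon A(G)\hat{\otimes}A(G)\to A(G\times G)$ be the canonical contractive algebra homomorphism, which is multiplicative and carries elements vanishing on $\check{\Del}_G$ into $I_{G\times G}(\check{\Del}_G)$. Then $\Gamma(w_k)\in I_{G\times G}(\check{\Del}_G)^d\subseteq\wbar{I^0_{G\times G}(\check{\Del}_G)}$, while $\Gamma(w_k)\to\Gamma(w)=w$. Hence $w\in\wbar{I^0_{G\times G}(\check{\Del}_G)}$, which completes the reduction and the theorem.

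The step I expect to be the real obstacle is the passage between the two tensor products. Weak amenability is a statement about $A(G)\hat{\otimes}A(G)$, whereas synthesis of $\check{\Del}_G$ lives in $A(G\times G)$, and these algebras differ (\cite{losert}), so there is no formal identification to lean on. Three ingredients must cooperate across this gap: the algebraic-tensor density of Lemma \ref{lem:algtensapprox}, which lets me begin from an element on which the two structures literally agree; the idempotency $\wbar{(\ker m)^n}=\ker m$, which manufactures the required $d$-fold factorization, that is, $d$-th order vanishing on the anti-diagonal; and the \emph{finiteness} of the weak-synthesis degree $d$ of \cite{parks}, which is precisely what converts that order of vanishing in $A(G\times G)$ into the support-separation defining $I^0_{G\times G}(\check{\Del}_G)$. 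Securing such a finite $d$ is where the Lie structure, in particular finite-dimensionality of $G$, is indispensable, and it is the reason the argument resists extension to general connected locally compact $G$.
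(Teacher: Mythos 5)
Your overall architecture (the density Lemma \ref{lem:algtensapprox}, the Gr{\o}nb{\ae}k criterion \eqref{eq:Kideal}, transport via the flip and the map $\Gamma$, and the weak-synthesis input from \cite{parks}) parallels the paper's, but there is a genuine gap at the step where you assert that the right-hand side of \eqref{eq:Kideal} equals $\ker m$ ``by the standard computation available since $A(G)$ is regular and Tauberian,'' and hence that $\wbar{(\ker m)^2}=\ker m$. The standard computation proving $\ker m\subseteq\wbar{A(G)\hat{\otimes}A(G)\cdot\ker m^\sharp}$ uses a bounded approximate identity: one writes $w=\lim_\alpha(e_\alpha\otimes e_\alpha)w$ and observes each term lies in $A(G)\hat{\otimes}A(G)\cdot\ker m^\sharp$. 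By Leptin's theorem \cite{leptin}, $A(G)$ has a bounded approximate identity only when $G$ is amenable, whereas Theorem \ref{theo:wassad} must cover non-amenable connected Lie groups such as $\mathrm{SL}(2,\Ree)$. Regularity and the Tauberian condition provide local units only for compactly supported elements, with no norm control: if $w\in\ker m$ and $w_n\in A_c(G)\otimes A_c(G)$ with $w_n\to w$, the natural correction $w_n-m(w_n)\otimes f_n\in A(G)\hat{\otimes}A(G)\cdot\ker m^\sharp$ (with $f_n$ a local unit for the second legs of $w_n$) carries an extra error $\norm{m(w_n)}\,\norm{f_n}$, and $\norm{f_n}$ is uncontrolled because $f_n$ can only be chosen after $w_n$. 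Without this idempotency your induction never starts: weak amenability places only your special elements $\til{w}\in A(G)\hat{\otimes}A(G)\cdot\ker m^\sharp$ into $\wbar{(\ker m)^2}$, and to climb to $\wbar{(\ker m)^d}$ you would need the factors appearing in the approximating products---arbitrary elements of $\ker m$---to lie in $\wbar{(\ker m)^2}$, which is exactly the unproved claim. As written, your argument is complete only for amenable $G$.

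The paper evades precisely this obstruction by performing the iteration in $A(G\times G)$ rather than in $A(G)\hat{\otimes}A(G)$: after a single application of \eqref{eq:Kideal} (in its flipped form for $\check{m}$), it transports immediately into $A(G\times G)$ and proves $J=\wbar{J^2}$ for the ideal $J=J_{G\times G}(\check{\Del}_G)$. That idempotency is available for the \emph{whole} ideal $J$ because Lemma \ref{lem:algtensapprox} gives density of $[A_c(G)\otimes A_c(G)]\cap J$ in all of $J$ (this is where the Lie hypothesis and the local smooth synthesis of \cite{parks} do their work), and because every element in sight is compactly supported, so regularity of $A(G\times G)$ supplies local units with no norm problem. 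Once $J=\wbar{J^2}$ holds for the full ideal, the induction $J=\wbar{J^m}$, the polarization $J^{2^m}=J^{(2^m)}$, and the local weak synthesis bound of \cite{parks} finish the proof. If you replace your idempotency claim for $\ker m$ by this idempotency of $J$, your proof becomes the paper's.
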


\begin{proof}
We let $\check{m}:A(G)\hat{\otimes}A(G)\to A(G)$ be given on elementary tensors
by $\check{m}(u\otimes v)=u\check{v}$, and likewise define $\check{m}^\sharp$.
Since $v\mapsto\check{v}$ is an isometry on $A(G)$, if $A(G)$ is weakly amenable
then (\ref{eq:Kideal}) implies that
\begin{equation}\label{eq:cKideal}
\wbar{(\ker\check{m})^2}=\wbar{A(G)\hat{\otimes}A(G)\cdot\ker \check{m}^\sharp}.
\end{equation}
For simplicity, we write $J=J_{G\times G}(\check{\Del}_G)$, below.
Let $u\in J$.  We wish to approximate
$u$ by elements from $I^0_{G\times G}(\check{\Del}_G)$.
We let $\iota:A(G)\hat{\otimes}A(G)\to A(G\times G)$ be the
linear contraction which embeds $A(G)\otimes A(G)$ into $A(G\times G)$.
We have that
\[
[A_c(G)\otimes A_c(G)]\cap J
\subset\iota(A_c(G)\otimes A_c(G)\cdot\ker \check{m}^\sharp)
\subseteq\wbar{\iota(\ker \check{m})^2}\cap A_c(G\times G)\subseteq\wbar{J^2}
\]
where the first inclusion follows from regularity of $A(G)$, the second inclusion is 
provided by (\ref{eq:cKideal}), and the third inclusion follows from regularity of $A(G\times G)$.
By Lemma \ref{lem:algtensapprox},  $[A_c(G)\otimes A_c(G)]\cap J$ is dense in $J$.
We thus conclude that $J=\wbar{J^2}$, and induction shows that $J=\wbar{J^m}$
for any $m$ in $\En$.

The identity $4uv=(u+v)^2-(u-v)^2$ shows that
$J^2=J^{(2)}$, where the latter notation was used in the definition of weak synthesis, above.
By induction we see that $J^{2^m}=J^{(2^m)}$ for any $m$ in $\En$.  We conclude that
$J=\wbar{J^{(2^m)}}$ for any $m$ in $\En$.

It is shown in \cite[Theorem 7]{parks}, that $\check{\Del}_G$ is a set of local weak synthesis, i.e.\
if $n\geq\dim(G)/2$, then $J^{(n)}\subset I^0_{G\times G}(\check{\Del}_G)$.  Hence we see
that for some $m$, $J=\wbar{J^{(2^m)}}=\wbar{I^0_{G\times G}(\check{\Del}_G)}$, and
local synthesis is established.
\end{proof}

\subsection{Functorial properties for local synthesis of the anti-diagonal}
Our ultimate goal is to show that a non-abelian connected Lie group
does not allow local synthesis of the anti-diagonal. The following
will allow us to reduce our calculations to certain computable cases.

We shall make use of the well-known result that for any locally compact
group $G$, and closed subgroup $H$, the restriction map
$R_H:A(G)\to A(H)$ is a quotient map.  See \cite{herz},  \cite[(4.21)]{mcmullen},
\cite[(3.23)]{arsac} or \cite{delaported}.  We even have $R_H(A_c(G))=A_c(H)$.

\begin{theorem}\label{theo:restriction}
Let $G$ be a locally compact group and $H$ is a closed connected Lie subgroup.
If $\check{\Del}_G$ is of local synthesis for $G\times G$, then $\check{\Del}_H$
is of local synthesis for $H\times H$.
\end{theorem}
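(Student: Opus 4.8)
The plan is to transport local synthesis from $G\times G$ down to $H\times H$ through the restriction map $R=R_{H\times H}\colon A(G\times G)\to A(H\times H)$, which is a quotient map with $R(A_c(G\times G))=A_c(H\times H)$ since $H\times H$ is a closed subgroup of $G\times G$. The organizing observation is that $\check{\Del}_H=\check{\Del}_G\cap(H\times H)$, because $(g,g^{-1})\in H\times H$ forces $g\in H$; consequently $R$ carries the relevant ideals for $\check{\Del}_G$ into those for $\check{\Del}_H$. Reading off supports one checks the two easy inclusions $R(I^0_{G\times G}(\check{\Del}_G))\subseteq I^0_{H\times H}(\check{\Del}_H)$ and $R\bigl(I_{G\times G}(\check{\Del}_G)\cap A_c(G\times G)\bigr)\subseteq I_{H\times H}(\check{\Del}_H)\cap A_c(H\times H)$. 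Granting these, the theorem finishes quickly: given $v\in I_{H\times H}(\check{\Del}_H)\cap A_c(H\times H)$, I would produce $w\in I_{G\times G}(\check{\Del}_G)\cap A_c(G\times G)$ with $Rw=v$; since then $w\in J_{G\times G}(\check{\Del}_G)=\wbar{I^0_{G\times G}(\check{\Del}_G)}$ by hypothesis, choosing $w_k\in I^0_{G\times G}(\check{\Del}_G)$ with $w_k\to w$ and applying the contraction $R$ gives $Rw_k\to v$ with $Rw_k\in I^0_{H\times H}(\check{\Del}_H)$, so $v\in\wbar{I^0_{H\times H}(\check{\Del}_H)}$. As such $v$ are dense in $J_{H\times H}(\check{\Del}_H)$, this yields $J_{H\times H}(\check{\Del}_H)\subseteq\wbar{I^0_{H\times H}(\check{\Del}_H)}$, hence (the reverse being automatic) local synthesis for $H\times H$.

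The crux is therefore the lifting statement: every $v\in I_{H\times H}(\check{\Del}_H)\cap A_c(H\times H)$ lifts to some $w$ in $I_{G\times G}(\check{\Del}_G)\cap A_c(G\times G)$. I would first use $R(A_c(G\times G))=A_c(H\times H)$ to choose any lift $w_0\in A_c(G\times G)$ with $Rw_0=v$; the difficulty is that $w_0$ need only vanish on $\check{\Del}_H\subseteq\check{\Del}_G$, not on all of $\check{\Del}_G$. To correct this I must measure the values of $w_0$ along the anti-diagonal, i.e. make sense of $\phi(g):=w_0(g,g^{-1})$ as an element of $A_c(G)$. This requires the boundedness of the anti-diagonal restriction $\check{D}\colon A(G\times G)\to A(G)$, $\check{D}u(g)=u(g,g^{-1})$. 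Although $\check{\Del}_G$ is not a subgroup, I obtain this by factoring $\check{D}=D\circ\Phi^*$, where $\Phi(g,h)=(g,h^{-1})$ induces the isometric automorphism $\Phi^*u=u\circ\Phi$ of $A(G\times G)$ (this uses that inversion is isometric on $A(G)$), and $D\colon A(G\times G)\to A(G)$, $Du(g)=u(g,g)$, is the genuine restriction to the diagonal subgroup $\Del_G\cong G$, hence a quotient map by the restriction theorem.

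With $\phi=\check{D}w_0\in A_c(G)$ in hand, the key point is that $\phi$ vanishes on $H$: for $h\in H$ we have $\phi(h)=w_0(h,h^{-1})=v(h,h^{-1})=0$, as $(h,h^{-1})\in\check{\Del}_H$. Using regularity of $A(G)$ I would pick $\eta\in A_c(G)$ equal to $1$ on the compact set $\supp\phi\cup(\supp\phi)^{-1}$ and set
\[
w=w_0-p,\qquad p(g,h)=\phi(g)\eta(h),
\]
where $p\in A_c(G\times G)$ is the image of $\phi\otimes\eta$ under the natural embedding $A(G)\otimes A(G)\hookrightarrow A(G\times G)$. Then along the anti-diagonal $w(g,g^{-1})=\phi(g)-\phi(g)\eta(g^{-1})=0$ (the bracket vanishes on $\supp\phi$ by the choice of $\eta$, and $\phi$ vanishes off it), so $w\in I_{G\times G}(\check{\Del}_G)\cap A_c(G\times G)$. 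Crucially, since $\phi$ vanishes on $H$, the correction term satisfies $Rp=0$, whence $Rw=Rw_0=v$, giving the required lift.

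The step I expect to be the main obstacle is precisely the boundedness of $\check{D}$ (equivalently the isometry of $\Phi^*$): because $\check{\Del}_G$ is not a subgroup one cannot invoke the restriction theorem for it directly, and it is the factorization through the diagonal, together with the one-variable inversion symmetry of $A(G\times G)$, that legitimizes restricting an arbitrary element of $A(G\times G)$ to the anti-diagonal. Everything else is bookkeeping with supports and the two easy ideal inclusions under $R$. I note that the argument uses only that $H$ is a closed subgroup; connectedness and the Lie structure of $H$ enter merely to ensure that local synthesis of $\check{\Del}_H$ is the property to be fed into the subsequent reduction.
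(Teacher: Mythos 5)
Your overall skeleton (the two easy inclusions under $R_{H\times H}$, the lifting statement, and the final density argument) matches the paper's proof, but the lifting step contains a genuine and fatal gap: the claim that $\Phi(g,h)=(g,h^{-1})$ induces a bounded --- let alone isometric --- map $\Phi^*u=u\circ\Phi$ on $A(G\times G)$ is false unless $G$ is virtually abelian. The inversion invariance of Fourier algebras applies to inversion of the group at hand, which for $G\times G$ is $(g,h)\mapsto(g^{-1},h^{-1})$; your $\Phi$ inverts only one coordinate and is neither an automorphism nor an anti-automorphism of $G\times G$. Under the identification of $A(G\times G)$ with the \emph{operator} projective tensor product of $A(G)$ with itself (not the Banach projective tensor product $A(G)\hat{\otimes}A(G)$, which is strictly smaller in general, as the paper notes citing \cite{losert}), $\Phi^*$ is $\id\otimes(\,\cdot\,)^\vee$, and its boundedness forces the check map to be \emph{completely} bounded on $A(G)$, which by Forrest--Runde \cite{forrestr} happens only when $G$ has an abelian subgroup of finite index. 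Concretely, take $G=\su(2)$ and $\pi$ irreducible of dimension $d$: the function $u(g,h)=\sum_{i,j}\pi_{ji}(g)\wbar{\pi_{ji}(h)}$ is a positive-definite coefficient of $\pi\times\bar{\pi}$ with $\norm{u}_{A(G\times G)}=u(e,e)=d$, while $u(g,g^{-1})=\mathrm{tr}(\pi(g)^2)=\chi_\pi(g^2)$ is an alternating sum of $d$ distinct characters and has $\norm{\chi_\pi((\cdot)^2)}_{A(G)}=d^2$. So the anti-diagonal restriction $\check{D}$ is unbounded (this is the partial-transpose phenomenon on trace classes), and by the closed graph theorem there even exists $u$ in $A(\su(2)\times\su(2))$ --- automatically compactly supported --- whose anti-diagonal restriction lies outside $A(\su(2))$. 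Thus your $\phi=\check{D}w_0$ need not exist as an element of $A_c(G)$, the correction term $p=\phi\otimes\eta$ is not available, and the failure occurs precisely for the non-abelian groups the theorem is aimed at.

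The paper's proof is designed exactly to avoid restricting a general element of $A(G\times G)$ to the anti-diagonal. It first invokes Lemma \ref{lem:algtensapprox} --- which rests on the Park--Samei result \cite{parks} that $\check{\Del}_H$ is of local \emph{smooth} synthesis, and which is where the hypotheses that $H$ be connected and Lie are genuinely used, contrary to your closing remark that only closedness matters --- to approximate $v\in J_{H\times H}(\check{\Del}_H)$ by finite sums $\sum_{i=1}^n u_i\otimes v_i$ in $A_c(H)\otimes A_c(H)$ vanishing on $\check{\Del}_H$. Such elements are then lifted term by term, with the explicit correction $u'=\sum_{i=1}^n(u_i'\otimes v_i'-w\otimes\check{u}_i'v_i')$, so that the check operation is only ever applied to single-variable functions in $A(G)$, where it is isometric; the relation $\sum_i\check{u}_iv_i=0$ on $H$ plays the role of your ``$\phi|_H=0$'' to ensure the correction dies under $R_{H\times H}$. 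In short, your plan can be repaired, but the repair is exactly the paper's reduction to algebraic tensors, and it cannot dispense with the Lie hypothesis as your argument purports to do.
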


\begin{proof}
We first claim that
\[
\wbar{R_{H\times H}(J_{G\times G}(\check{\Del}_G))}=J_{H\times H}(\check{\Del}_H).
\]
It is clear that $R_{H\times H}(J_{G\times G}(\check{\Del}_G))\subseteq J_{H\times H}(\check{\Del}_H)$.
To see the converse inclusion, let $u\in J_{H\times H}(\check{\Del}_H)$.  Since
$H$ is a connected Lie group, Lemma
\ref{lem:algtensapprox} allows us to assume that $u\in A_c(H)\otimes A_c(H)$, hence
$u=\sum_{i=1}^n u_i\otimes v_i$.  The restriction theorem assures that there are
elements $u_i',v_i'$ in $A_c(G)$ for which $R_Hu_i'=u_i$, $R_Hv_i'=v_i$ for each $i$.
Hence if $w$ in $A_c(G)$ satisfies that $w|_S=1$ where $S=\bigcup_{i=1}^n\supp(u_i)$, then
\[
u'=\sum_{i=1}^n(u_i'\otimes v_i'-w\otimes\check{u}_i'v_i')\in I_{G\times G}(\check{\Del}_G)\cap A_c(G\times G)
\]
and, since $\sum_{i=1}^n\check{u}_i(h)v_i(h)=u(h^{-1},h)=0$ for $h$ in $H$, we have
\[
R_{H\times H}(u')=\sum_{i=1}^n(u_i\otimes v_i-R_H(w)\otimes\check{u}_iv_i)=u.
\]
It then follows that $u\in R_{H\times H}(J_{G\times G}(\check{\Del}_G))$.

It is evident that
\[
R_{H\times H}(I^0_{G\times G}(\check{\Del}_G))\subseteq
I^0_{H\times H}(\check{\Del}_H),\quad\text{so}\quad
R_{H\times H}\left(\wbar{I^0_{G\times G}(\check{\Del}_G)}\right)\subseteq
\wbar{I^0_{H\times H}(\check{\Del}_H)}.
\]
Our assumption that $J_{G\times G}(\check{\Del}_G)=\wbar{I^0_{G\times G}(\check{\Del}_G)}$,
coupled with the result of the prior paragraph, shows that
$\wbar{I^0_{H\times H}(\check{\Del}_H)}=J_{H\times H}(\check{\Del}_H)$.
\end{proof}

The proof of the next functorial property is more general, though its proof is a little more involved.
We shall use the following fact about local synthesis.
This is related to well-known facts about spectral synthesis
in regular function algebras; see, for example, expositions in \cite{reitersB,kaniuthB}.
If $G$ is a locally compact group and $E$ is a closed subset of $G$, we have
for $u$ in $A_c(G)$ that
\[
u\in \wbar{I^0_G(E)}\quad\Leftrightarrow\quad
\begin{matrix}u\text{ is ``locally in }\wbar{I^0_G(E)}\text{", i.e.\ for every }g\text{ in }E\text{ there is a
neigh-\phantom{m}} \\
\text{bourhood }
U_g\text{ of }g\text{, and a }u_g\text{ in }\wbar{I^0_G(E)}
\text{, for which }u|_{U_g}=u_g|_{U_g}.
\end{matrix}
\]
Indeed, we need to only prove sufficiency.  In this case, take such a collection
$U_1=U_{g_1},\dots,U_n=U_{g_n}$ of open sets covering $\supp(u)\cap E$, and
associated functions $u_1,\dots,u_n$.  Let $U_{n+1}$
be a neighbourhood of $\supp(u)\setminus\bigcup_{k=1}^nU_k$ with
$U_{n+1}\cap E=\varnothing$.  Any partition of unity $v_1,\dots,v_{n+1}$
of $\supp(u)$ subordinate to $U_1,\dots,U_{n+1}$, satisfies $uv_k=u_kv_k$, for each $k=1,\dots,n$,
so $u=\sum_{k=1}^nu_kv_k+uv_{n+1}\in \wbar{I^0_G(E)}$.

The following holds for any regular  Banach function algebra.  To avoid introducing new
notation, we state it only for a Fourier algebra.

\begin{proposition}\label{prop:locsynthdisc}
Let $G$ be a locally compact group and $E$ be a closed subset  which
admits a partition, $E=\bigsqcup_{i\in I}E_i$, for which each $E_i$ is closed in $G$
and relatively open in $E$.  Then $E$ is of local synthesis for $G$ if and only if
each $E_i$ is of local synthesis.
\end{proposition}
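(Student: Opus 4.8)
The plan is to prove both implications by reducing, in each case, to showing that a compactly supported function lies in the relevant $\wbar{I_G^0(\cdot)}$, and then verifying this membership pointwise via the localization principle recorded just before the statement. In both directions one inclusion is automatic: since $I_G^0(E)\subseteq J_G(E)$ and $J_G(E)$ is closed, local synthesis of $E$ is equivalent to the single inclusion $I_G(E)\cap A_c(G)\subseteq\wbar{I_G^0(E)}$ (and similarly for each $E_i$), because $I_G(E)\cap A_c(G)$ is dense in $J_G(E)$ while $\wbar{I_G^0(E)}$ is closed. The hypotheses on the partition enter through two topological remarks: if $g\in E_i$, then relative openness of $E_i$ in $E$ gives an open $W\ni g$ with $W\cap E=W\cap E_i$ (so near $g$ the set $E$ reduces to the single piece $E_i$); and, since each $E_i$ is relatively open in $E$ with $E$ closed in $G$, the set $E\setminus E_i$ is closed in $G$ and disjoint from the closed set $E_i$.

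For the implication that synthesis of every $E_i$ forces synthesis of $E$, I would take $u\in I_G(E)\cap A_c(G)$ and fix $g\in E$, say $g\in E_i$. Then $u\in I_G(E_i)\cap A_c(G)\subseteq J_G(E_i)=\wbar{I_G^0(E_i)}$ by hypothesis, so the localization principle applied to $E_i$ yields a neighbourhood $U_g\subseteq W$ of $g$ and some $u_g\in\wbar{I_G^0(E_i)}$ with $u|_{U_g}=u_g|_{U_g}$. Using regularity of $A(G)$, choose $\phi\in A_c(G)$ with $\supp\phi\subseteq W$ and $\phi\equiv1$ on a neighbourhood $V_g$ of $g$. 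The key step is the containment $\phi\,I_G^0(E_i)\subseteq I_G^0(E)$: for $v\in I_G^0(E_i)$ one has $\supp(\phi v)\subseteq W$, whence $\supp(\phi v)\cap E=\supp(\phi v)\cap(W\cap E)=\supp(\phi v)\cap E_i\subseteq\supp v\cap E_i=\varnothing$. By continuity of multiplication by $\phi$ this gives $\phi u_g\in\wbar{I_G^0(E)}$, and since $\phi u_g=u$ on $V_g$, the function $u$ is locally in $\wbar{I_G^0(E)}$ at $g$. As $g\in E$ was arbitrary and $u\in A_c(G)$, the localization principle yields $u\in\wbar{I_G^0(E)}$.

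For the converse, fixing an index $i_0$, I would take $u\in I_G(E_{i_0})\cap A_c(G)$ and $g\in E_{i_0}$, now using the second topological remark: choose an open $W\ni g$ with $W\cap(E\setminus E_{i_0})=\varnothing$, so that $W\cap E=W\cap E_{i_0}$, together with $\phi\in A_c(G)$ satisfying $\supp\phi\subseteq W$ and $\phi\equiv1$ near $g$. Here $\phi u$ vanishes on all of $E$: off $W$ by its support, and on $W\cap E=W\cap E_{i_0}$ because $u|_{E_{i_0}}=0$. Thus $\phi u\in I_G(E)\cap A_c(G)\subseteq\wbar{I_G^0(E)}\subseteq\wbar{I_G^0(E_{i_0})}$, the last inclusion holding because $E_{i_0}\subseteq E$ forces $I_G^0(E)\subseteq I_G^0(E_{i_0})$. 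Since $\phi u=u$ near $g$, the function $u$ is locally in $\wbar{I_G^0(E_{i_0})}$ at each $g\in E_{i_0}$, and the localization principle again finishes.

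I expect the main obstacle to be the bookkeeping around the support containment $\phi\,I_G^0(E_i)\subseteq I_G^0(E)$ in the first direction, where one must exploit that confining $\supp\phi$ to $W$ collapses the distinction between $E$ and $E_i$; everything else is a careful but routine use of regularity and of the localization principle, the latter being the genuine engine that converts the pointwise agreements into membership in the closed ideals.
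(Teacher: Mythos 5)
Your proposal is correct and takes essentially the same route as the paper's proof: in both directions one multiplies by a cutoff function whose support confines attention to a region where $E$ coincides with the single piece $E_i$, and then invokes the localization principle for membership in $\wbar{I^0_G(\cdot)}$. The only cosmetic differences are that you require $\supp\phi\subseteq W$ where the paper's cutoff $v$ instead satisfies $\supp(v)\cap E=\supp(v)\cap E_i$, and that you use continuity of multiplication in place of the paper's explicit approximating sequence.
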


\begin{proof}
($\Rightarrow$) Let $F$ be a subset of $E$ which is closed in $G$ and relatively open
in $E$;  in particular we may consider $F=E_i$ for a fixed $i$.
Let $u\in J_G(F)\cap A_c(G)$.
For every $g$ in $F$,  our assumptions on $F$  allows
us to choose a compact neighbourhood $U$ of $g$
for which $U\cap E=U\cap F$.
We find $v$ in $A_c(G)$ for which
$\supp(v)\cap E=\supp(v)\cap F$ and $v|_U=1$.
We have that $u=uv$ on $U$ and
\[
uv\in J_G(E)=
\wbar{I^0_G(E)}\subseteq\wbar{I^0_G(F)}.
\]
Thus $u$ is locally in $\wbar{I^0_G(F)}$, whence in $J_G(F)$.

($\Leftarrow$)  Let $u\in J_G(E)\cap A_c(G)$.
Since each $E_i$ is of local synthesis, we have
\[
u\in J_G(E)\subseteq J_G(E_i)=\wbar{I^0_G(E_i)}
\]
i.e.\ $u=\lim_{n\to\infty}u_n$, where each $u_n\in I^0_G(E_i)$.
Fix $g$ in $E_i$ and, again, our assumption of relative openness of $E_i$ in $E$ allows us to
take a compact neighbourhood $U$ of $g$ for which
$U\cap E=U\cap E_i$.  If $v$ is any element of $A_c(G)$ for which
$\supp(v)\cap E=\supp(v)\cap E_i$ and $v|_U=1$, then
$u=uv$ on $U$, and $uv=\lim_{n\to\infty}u_nv$, where each $u_nv\in I^0_G(E)$.
Hence $u$ is locally in $\wbar{I^0_G(E)}$, thus $u\in\wbar{I^0_G(E)}$.
\end{proof}

\begin{theorem}\label{theo:discnorm}
Let $G$ be a locally compact group and $\Gam$ a discrete normal subgroup.
Then $\check{\Del}_G$ is of local synthesis for $G\times G$ if and only if
$\check{\Del}_{G/\Gam}$ is of local synthesis for $G/\Gam\times G/\Gam$.
\end{theorem}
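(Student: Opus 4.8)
The plan is to transport local synthesis across the covering map $q\times q$ in two stages: a decomposition stage, handled by Proposition \ref{prop:locsynthdisc}, and an injection stage across the quotient. Writing $p=q\times q:G\times G\to(G/\Gam)\times(G/\Gam)$ for the product of the quotient maps, I would first note that a point $(g,h)$ lies in $p^{-1}(\check{\Del}_{G/\Gam})$ exactly when $gh\in\Gam$, so
\[
p^{-1}(\check{\Del}_{G/\Gam})=\bigsqcup_{\gam\in\Gam}\check{\Del}_G\cdot(e,\gam),
\]
a disjoint union of right translates of the anti-diagonal, one for each $\gam\in\Gam$. Because $\Gam$ is discrete, each piece $\check{\Del}_G\cdot(e,\gam)=\{(g,g^{-1}\gam):g\in G\}$ is closed in $G\times G$ (it is the preimage of the closed point $\{\gam\}$ under $(g,h)\mapsto gh$) and relatively open in the union (if $(g_k,g_k^{-1}\gam_k)$ converges, then $\gam_k\to\gam$, forcing $\gam_k=\gam$ eventually). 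Since right translation is an isometric automorphism of $A(G\times G)$ carrying the ideals $I^0$, $J$ and $I$ to their translates, each $\check{\Del}_G\cdot(e,\gam)$ is of local synthesis if and only if $\check{\Del}_G$ is; applying Proposition \ref{prop:locsynthdisc} to this partition shows that $p^{-1}(\check{\Del}_{G/\Gam})$ is of local synthesis for $G\times G$ if and only if $\check{\Del}_G$ is.

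It then remains to prove the injection statement: for the discrete normal subgroup $N=\Gam\times\Gam$ of $H=G\times G$ with quotient $p$, a closed set $F\subseteq H/N$ (here $F=\check{\Del}_{G/\Gam}$) is of local synthesis for $H/N$ if and only if $p^{-1}(F)$ is of local synthesis for $H$. The tools I would use are: the isometric inflation $\Phi:A(H/N)\to A(H)$, $\Phi\bar u=\bar u\circ p$, onto the $N$-invariant functions (\cite{eymard}); the predual averaging map $P:A(H)\to A(H/N)$, a contraction with $P\Phi=\id$ which on compactly supported functions is the periodization $Pw(yN)=\sum_{n\in N}w(yn)$ (\cite{eymard,kaniuthB}); the fact that, $N$ being discrete, $p$ is a covering map, so every point has a relatively compact open neighbourhood $W$ with $p|_W$ a homeomorphism onto $p(W)$ and with the $N$-translates of $W$ pairwise disjoint; and the local characterization of $\wbar{I^0}$ recorded before Proposition \ref{prop:locsynthdisc}. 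On such a single sheet, $\Phi$ and $P$ are mutually inverse transports: for $w\in A_c(H)$ with $\supp w\subseteq W$ one has $Pw\circ p=w$ on $W$ and $\supp Pw\subseteq p(W)$, while for $\bar u\in A_c(H/N)$ with $\supp\bar u\subseteq p(W)$ and a cutoff $\theta\in A_c(H)$ equal to $1$ on $W$ with $\supp\theta\subseteq W$ one has $P(\theta\,\Phi\bar u)=\bar u$.

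With these in hand both directions are the same local argument; I describe ``$F$ of local synthesis $\Rightarrow$ $p^{-1}(F)$ of local synthesis''. By the local characterization it suffices to show that any $v\in I_H(p^{-1}(F))\cap A_c(H)$ is locally in $\wbar{I^0_H(p^{-1}(F))}$ at each $x\in p^{-1}(F)$. Choosing a sheet $W\ni x$ and a cutoff $\theta$ as above, I would push $\theta v$ down to $\bar v=P(\theta v)\in A_c(H/N)$; since $v$ vanishes on $p^{-1}(F)$ and $\supp\bar v\subseteq p(W)$, the function $\bar v$ lies in $I_{H/N}(F)\cap A_c(H/N)\subseteq J_{H/N}(F)$. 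The hypothesis gives $\bar v\in\wbar{I^0_{H/N}(F)}$, and after multiplying the approximants by a downstairs cutoff supported in $p(W)$ I may assume they lie in $I^0_{H/N}(F)$ with support in $p(W)$. Transporting them back up by $w\mapsto\theta'\Phi(w)$ for a cutoff $\theta'$ supported in $W$ and equal to $1$ near $x$ produces elements of $I^0_H(p^{-1}(F))$ converging to $\theta'\Phi(\bar v)$, which agrees with $v$ on a neighbourhood of $x$ because $\Phi(\bar v)=\Phi P(\theta v)$ coincides with $\theta v$ on the single sheet $W$. Thus $v$ is locally in $\wbar{I^0_H(p^{-1}(F))}$, and the reverse implication is identical with the roles of $\Phi$ and $P$ exchanged. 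Combining the two stages yields the theorem.

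The step I expect to be the main obstacle is this second stage, and within it the well-definedness and boundedness of the averaging map $P$ into $A(H/N)$ -- equivalently, that the periodization of a compactly supported element of $A(H)$ again lies in the Fourier algebra -- together with the bookkeeping needed to confine every support to a single sheet so that $\Phi$ and $P$ genuinely invert one another there. The first stage, by contrast, is purely formal once Proposition \ref{prop:locsynthdisc} and the translation invariance of the ideals are invoked.
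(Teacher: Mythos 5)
Your first stage is correct and is, in essence, the paper's own argument: the paper decomposes $q^{-1}(\check{\Del}_{G/\Gam})$ into the fibres $(\gam,\gam')\cdot\check{\Del}_G$ indexed by $(\Gam\times\Gam)/\Del_\Gam$ (the same family as your translates $\check{\Del}_G\cdot(e,\gam)$, $\gam\in\Gam$), proves their relative openness by the same discreteness argument, and then combines Proposition \ref{prop:locsynthdisc} with translation invariance exactly as you do. The difference is that the paper never proves your second stage at all: it invokes the injection theorem of Lohou\'{e} \cite{lohoue} for precisely that statement. Your attempt to prove it yourself contains a genuine gap, and you have in fact correctly guessed where it lies.

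Writing, as you do, $H=G\times G$ and $N=\Gam\times\Gam$, both of your transport tools are false as stated. First, the inflation $\Phi\bar{u}=\bar{u}\circ p$ does not take values in $A(H)$ when $N$ is infinite: $\bar{u}\circ p$ is constant on the cosets $yN$, which are infinite closed discrete sets, so a nonzero $\bar{u}\circ p$ cannot vanish at infinity. It lands only in the Fourier--Stieltjes algebra $B(H)$; this defect is repairable, since you only ever use $\theta\,\Phi\bar{u}$ with $\theta\in A_c(H)$, and $A(H)$ is an ideal in $B(H)$. The fatal problem is the ``predual averaging map'': no bounded $P:A(H)\to A(H/N)$ restricting to periodization on $A_c(H)$ exists. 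Note that $P\Phi=\id$ is already incoherent, since the periodization of the $N$-periodic function $\Phi\bar{u}$ diverges; and periodization itself is unbounded in the $A$-norms. For $H=\Ree$, $N=\Zee$, take $\phi_N\in\fD(\Ree)$ with $\phi_N=1$ on $[0,N]$ and $\supp\phi_N\subseteq[-1,N+1]$, and set $w_N(x)=e^{\pi ix/N}\phi_N(x)$. Then $\norm{w_N}_{A(\Ree)}=\norm{\phi_N}_{A(\Ree)}=O(\log N)$, while for $x\in[0,1)$ one has $|Pw_N(\dot{x})|\geq\bigl|\sum_{n=0}^{N-1}e^{\pi in/N}\bigr|-2\geq\frac{2N}{\pi}-2$, so $\norm{Pw_N}_{A(\Tee)}\geq\norm{Pw_N}_\infty\geq\frac{2N}{\pi}-2$; the ratio blows up. Neither \cite{eymard} nor \cite{kaniuthB} contains such a map (averaging results of this type require $N$ compact). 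What your argument genuinely needs is the sheet-localized statement: an element of $A(H)$ supported in a single sheet $W$ pushes down to an element of $A(H/N)$ --- qualitatively, for your direction where the approximation is done downstairs and lifted by $\theta'\Phi(\cdot)$, and with a $W$-dependent norm bound for the reverse direction, where the approximating sequence lives upstairs and must be pushed down. That local statement is true, but it is exactly the nontrivial core of Lohou\'{e}'s injection theorem; it is not a formal consequence of standard functorial properties of the Fourier algebra. As written, your second stage assumes what it must prove, so to complete the argument you must either cite \cite{lohoue}, as the paper does, or supply an honest proof of this local transport lemma.
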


\begin{proof}
The injection theorem of  \cite{lohoue} tells us that
$\check{\Del}_{G/\Gam}$ is of local synthesis for $G/\Gam\times G/\Gam$
if and only if $q^{-1}(\check{\Del}_{G/\Gam})$, where $q:G\times G\to G/\Gam\times G/\Gam$
is the quotient map, is a set of local synthesis for $G\times G$.
Hence we wish to verify that
\begin{equation}\label{eq:injection}
\check{\Del}_G\text{ is of local synthesis}\quad\Leftrightarrow\quad
q^{-1}(\check{\Del}_{G/\Gam})\text{ is of local synthesis}
\end{equation}
each for $G\times G$.  Hence we wish to examine the structure of the latter set.

We first observe that since $g\Gam=\Gam g$ and $(\Gam g)^{-1}=g^{-1}\Gam$ in $G/\Gam$, we have
\[
q^{-1}(\check{\Del}_{G/\Gam})=\{(\gam g,g^{-1}{\gam'}^{-1}):g\in G,\gam,\gam'\in\Gam\}.
\]
Hence if we define an action of $\Gam\times\Gam$ on $G\times G$ by $(\gam,\gam')\cdot (g,g')
=(\gam g,g'{\gam'}^{-1})$, then $q^{-1}(\check{\Del}_{G/\Gam})=(\Gam\times\Gam)\cdot \check{\Del}_G$,
the orbit of the set $\check{\Del}_G$ under this action.  It is easy to check that
\[
(\gam,\gam')\cdot \check{\Del}_G= \check{\Del}_G
\quad\Leftrightarrow\quad \gam'=\gam
\]
i.e.\ if $(\gam g,g^{-1}{\gam'}^{-1})=(h,h^{-1})$ then $\gam g=(g^{-1}{\gam'}^{-1})^{-1}$;
and hence
\[
(\gam,\gam')\cdot \check{\Del}_G= (\lam,\lam')\cdot\check{\Del}_G
\quad\Leftrightarrow\quad \lam^{-1}\gam={\lam'}^{-1}\gam'
\quad\Leftrightarrow\quad \lam'\lam^{-1}=\gam'\gam^{-1}.
\]
Thus $q^{-1}(\check{\Del}_{G/\Gam})=\bigsqcup_{(\gam,\gam')
\Del_\Gam\in(\Gam\times\Gam)/\Del_\Gam}(\gam,\gam')\cdot \check{\Del}_G$,
where $\Del_\Gam=\{(\gam,\gam):\gam\in\Gam\}$.  Let us see that
\begin{equation}\label{eq:open}
\text{the individual fibres }(\lam,\lam')\cdot \check{\Del}_G\text{ are relatively
open in }q^{-1}(\check{\Del}_{G/\Gam}).
\end{equation}
Suppose that $(\lam h,h^{-1}{\lam'}^{-1})$
is approached by a net of elements $(\gam_n g_n,g_n^{-1}{\gam'_n}^{-1})$.
Then the net of elements $(\lam^{-1}\gam_n g_n h^{-1},hg_n^{-1}{\gam'_n}^{-1}\lam')$
would approach $(e,e)$, which implies that $\lam^{-1}\gam_n{\gam'_n}^{-1}\lam'$ approaches
$e$, and hence is ultimately $e$, by discreteness; thus $\gam_n{\gam'_n}^{-1}$ is ultimately
$\lam{\lam'}^{-1}$, i.e.\ the net is ultimately in the fibre $(\lam,\lam')\cdot\check{\Del}_G$.

($\Rightarrow$)  If $\check{\Del}_G$ is of local synthesis for $G\times G$, then so too
is each fibre $(\gam,\gam')\cdot \check{\Del}_G=(\gam,e)\check{\Del}_G(e,{\gam'}^{-1})$.
We then appeal to Proposition \ref{prop:locsynthdisc} and (\ref{eq:open}) to obtain
necessity in (\ref{eq:injection}).

($\Leftarrow$) The anti-diagonal $\check{\Del}_G$ is an open fibre of
$q^{-1}(\check{\Del}_{G/\Gam})$, thanks to (\ref{eq:open}).
Then Proposition \ref{prop:locsynthdisc} delivers sufficiency in (\ref{eq:injection}).
\end{proof}

In practice we will use this last result with a central discrete subgroup.
Centrality seems to offer no meaningful simplification to the proof, however.

\section{Failure of local synthesis for the anti-diagonal}\label{sec:fivegroups}

\subsection{The basic strategy.}  We wish to show that for certain given
$2$ or $3$-dimensional non-abelian connected Lie groups $G$, that
its anti-diagonal fails to be a set of local synthesis for  $G\times G$ .
We shall exploit the density of the space of test functions $\fD(G)$ in
$A(G)$, as outlined in Section \ref{ssec:WALSAD}.  This implies that
any $T$ in $VN(G)\cong A(G)^*$ may be understood as a distribution,
i.e.\ $T\in\fD(G)^*$.

The following informs all of the choices made through the rest of this section.

\begin{lemma}\label{lem:basic}
Let $G$ be a connected Lie group with Lie algebra $\fg$ and $X\in\fg$ with $X\not=0$,
hence $(X,0)$ is an element of the Lie algebra $\fg\times\fg$ of $G\times G$.
Let $0\not=v\in L^1(G)$. If there exists $S=S_{X,v}$ in $VN(G\times G)$ for which
\[
S(u)= \int_G \partial_{(X,0)}u(g,g^{-1})v(g)\,dg
\]
whenever $u\in\fD(G\times G)$,  then $\check{\Del}_G$ is
not a set of local synthesis for $G\times G$.
\end{lemma}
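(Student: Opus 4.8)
The plan is to use the hypothesized $S$ as an obstruction to the equality that defines local synthesis. Recall that $\check{\Del}_G$ is a set of local synthesis for $G\times G$ exactly when $\wbar{I^0_{G\times G}(\check{\Del}_G)}=J_{G\times G}(\check{\Del}_G)$, and that in general $\wbar{I^0_{G\times G}(\check{\Del}_G)}\subseteq J_{G\times G}(\check{\Del}_G)$. Since $S$ is a bounded functional on $A(G\times G)$, it suffices to check that $S$ annihilates $\wbar{I^0_{G\times G}(\check{\Del}_G)}$ while $S$ does not annihilate $J_{G\times G}(\check{\Del}_G)$; the two closed ideals are then distinct, so local synthesis fails.

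First I would verify that $S$ kills $\wbar{I^0_{G\times G}(\check{\Del}_G)}$. By continuity it is enough to treat $u\in I^0_{G\times G}(\check{\Del}_G)$, and here I would reduce to the smooth case. Since $\supp u$ is compact and disjoint from the closed set $\check{\Del}_G$, choose a cut-off $\phi\in\fD(G\times G)$ with $\phi\equiv 1$ on $\supp u$ and $\supp\phi\cap\check{\Del}_G=\varnothing$, and approximate $u$ in the $A(G\times G)$-norm by functions $u_n\in\fD(G\times G)$. Then $\phi u_n\to\phi u=u$, each $\phi u_n$ lies in $\fD(G\times G)$ and vanishes on a neighbourhood of $\check{\Del}_G$, so $\partial_{(X,0)}(\phi u_n)(g,g^{-1})=0$ for all $g$ and the defining formula gives $S(\phi u_n)=0$. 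Passing to the limit yields $S(u)=0$.

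The heart of the argument is to show that $S$ does not annihilate $J_{G\times G}(\check{\Del}_G)$, for which I would construct a single $u\in\fD(G\times G)$ that vanishes on $\check{\Del}_G$ (so that $u\in I_{G\times G}(\check{\Del}_G)\cap A_c(G\times G)\subseteq J_{G\times G}(\check{\Del}_G)$) with $S(u)\neq 0$. The main obstacle is that, since $(X,0)$ is transverse to $\check{\Del}_G$, I must produce a compactly supported smooth function that vanishes on the anti-diagonal and yet carries a prescribed transverse derivative $\partial_{(X,0)}u(g,g^{-1})$ along it; the clean way to do this is to let the group structure supply a defining function. Fix $\ell\in\fg^*$ with $\ell(X)=1$ and $\chi\in\fD(G)$ with $\chi(e)=0$ and $(d\chi)_e=\ell$, and set $u(g,h)=\psi(g)\chi(hg)$ for a $\psi\in\fD(G)$ to be chosen. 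This $u$ is smooth and compactly supported, and $u(g,g^{-1})=\psi(g)\chi(e)=0$. Differentiating in the direction $(X,0)$, the term in which the derivative lands on $\psi$ carries the factor $\chi(e)=0$, while on the remaining term one has $h\,(g\exp(tX))=\exp(tX)$ at $h=g^{-1}$; hence $\partial_{(X,0)}u(g,g^{-1})=\psi(g)\,(d\chi)_e(X)=\ell(X)\psi(g)=\psi(g)$.

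It remains to choose $\psi$. Because $v\neq 0$ in $L^1(G)$, the functional $\psi\mapsto\int_G\psi v$ on $\fD(G)$ is nonzero, so there is $\psi\in\fD(G)$ with $\int_G\psi v\neq 0$. For this $u$ we then compute $S(u)=\int_G\partial_{(X,0)}u(g,g^{-1})v(g)\,dg=\int_G\psi(g)v(g)\,dg\neq 0$. Thus $u\in J_{G\times G}(\check{\Del}_G)$ but $u\notin\ker S\supseteq\wbar{I^0_{G\times G}(\check{\Del}_G)}$, so $\wbar{I^0_{G\times G}(\check{\Del}_G)}\neq J_{G\times G}(\check{\Del}_G)$ and $\check{\Del}_G$ is not a set of local synthesis for $G\times G$.
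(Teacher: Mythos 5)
Your proof is correct, and it follows the same overall skeleton as the paper's: show that $S$, being bounded, annihilates $\wbar{I^0_{G\times G}(\check{\Del}_G)}$ (via a smooth cutoff supported away from $\check{\Del}_G$ together with approximation by test functions — your version of this half is essentially identical to the paper's) but does not annihilate $J_{G\times G}(\check{\Del}_G)$. Where you genuinely diverge is in the witness for the second half. The paper takes $w=xy\otimes z-x\otimes\check{y}z$ with $x,y,z\in\fD(G)$, computes via the Leibniz rule that $S(w)=\int_G x(g)\,\partial_X y(g)\,z(g^{-1})v(g)\,dg$, and then asserts without detail that $x,y,z$ can be chosen to make this nonzero — a claim that still requires a small local argument (e.g.\ that $\partial_X y$ can be made identically $1$ on a neighbourhood of a suitable point, using that the right-invariant flow of $X\neq 0$ is nonsingular). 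Your witness $u(g,h)=\psi(g)\chi(hg)$, with $\chi(e)=0$ and $(d\chi)_e(X)=1$, instead uses the group multiplication as a defining function for $\check{\Del}_G$, so that the transverse derivative comes out exactly as $\partial_{(X,0)}u(g,g^{-1})=\psi(g)$; the nonvanishing of $S(u)$ then reduces at once to choosing $\psi\in\fD(G)$ with $\int_G\psi v\neq 0$, which is immediate from $v\neq 0$ in $L^1(G)$. What the paper's tensor-form witness buys is coherence with the rest of the argument (elements of the same shape $xy\otimes z-x\otimes\check{y}z$ arise from $\ker\check{m}$ in Theorem 1.4 and in the restriction theorem); what yours buys is a completely self-contained and degenerate-case-free verification, with no unproved choice left to the reader.
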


\begin{proof}
We will verify that $S\in I^0_{G\times G}(\check{\Del}_G)^\perp\setminus
J_{G\times G}(\check{\Del}_G)^\perp$, where $K^\perp$ denotes the annihilator of the
subspace $K$.  This will show that $\wbar{I^0_{G\times G}(\check{\Del}_G)}\subsetneq
J_{G\times G}(\check{\Del}_G)$.

Let us see that $S\in I^0_{G\times G}(\check{\Del}_G)^\perp$.  Let
$u\in I^0_{G\times G}(\check{\Del}_G)$, and $\eps>0$. By the regularity of the algebra
$\fD(G\times G)$, there is $w$ in $\fD(G\times G)$ such that $w|_{\supp(u)}=1$
and $\supp(w)\cap\check{\Del}_G=\varnothing$.
Since $I^0_{G\times G}(\check{\Del}_G)\subseteq J^\fD_{G\times G}(\check{\Del}_G)$,
 there is $u'$ in $\fD(G\times G)\cap
I_{G\times G}(\check{\Del}_G)$ such that $\norm{u-u'}_A<\eps/\norm{w}_A$.  But then, since
$u=wu$, we have that $\norm{u-wu'}_A< \eps$. As $wu'\in  I^0_{G\times G}(\check{\Del}_G)$,
it is obvious that $\partial_{(X,0)}(wu')|_{\check{\Del}_G}=0$, so
$S(wu')=0$.  Hence $|S(u)|=|S(u-wu')|\leq \norm{S}_{VN}\eps$.  As $\eps>0$ may
be chosen arbitrarily, $S(u)=0$.

Let us now see that $S\not\in J_{G\times G}(\check{\Del}_G)^\perp$.
We consider general $x,y,z$ in $\fD(G)$, and let $w=xy\otimes z-x\otimes\check{y}z$, which
is an element of $\fD(G\times G)\cap J_{G\times G}(\check{\Del}_G)$.  But then
\[
S(w)=\int_G [\partial_X(xy)(g)-\partial_Xx(g)y(g)]z(g^{-1})v(g)\,dg
=-\int_G x(g)\partial_Xy(g)z(g^{-1})v(g)\;dg.
\]
We may choose $x,y,z$ for which $S(w)\not=0$.
\end{proof}

We shall require disintegration of the left regular representation of $G$ into
irreducible components.  For this purpose and to introduce notation,
we summarize the Plancherel theorem of \cite{tatsuuma}.
Our presentation is influenced by \cite[(7.50)]{folland}.
We purposely restrict the description, to fit our needs.
We let $\what{G}$ denote the space of (equivalence classes of) irreducible representations
of $G$, and accept the standard abuse of notation where we conflate an equivalence class with
one of its representatives.  If $u\in\fC_c(G)$ we let its Fourier transform be given at $\pi$ in $\what{G}$
by
\[
\hat{u}(\pi)=\int_G u(g)\pi(g)\;dg\in\fB(\fH_\pi)\text{, i.e.\ }
\langle \hat{u}(\pi)\xi|\eta\rangle=\int_G u(g)\langle \pi(g)\xi|\eta\rangle\,dg
\]
where $\xi,\eta\in\fH_\pi$, and $\fH_\pi$ is the space on which $\pi$ acts.

\begin{proposition}\label{prop:plancherel}
Suppose $G$ is a connected Lie group for which the kernel of the modular function,
$K=\ker\Del$, is type I, and $G$ acts on $\what{K}$ regularly in the sense
that there is a Borel cross-section for the space of orbits of $G$ on $\what{K}$.
Then $\what{G}$ contains
\begin{itemize}
\item a dense Borel subset $S(\what{G})$
of elements, each of which is the induced representation from some closed
subgroup of $\ker\Del$;

\item a Borel parametrization $b\mapsto \pi_b:B\to S(\what{G})$ and a Borel measure
$\mu$ on $B$; and

\item for each $b$ in $B$, a positive operator $\del_b$ on $\fH_{\pi_b}$, which
satisfies
\[
\pi_b(g)\del_b\pi_b(g^{-1})=\frac{1}{\Del(g)}\del_b
\]
\end{itemize}
for which the choice of the triple
$\left((\del_b)_{b\in B},(\pi_b)_{b\in B},\mu\right)$ is unique,
up to measure equivalence of $((\pi_b)_{b\in B},\mu)$, and
such that the Plancherel transform on $\fC_c(G)$, given  by
\[
u\mapsto(\what{u}(\pi_b)\del_b^{1/2})_{b\in B}
\]
extends to a unitary identifying
\[
L^2(G)\cong\int^\oplus_B\fH_{\pi_b}\otimes^2\wbar{\fH}_{\pi_b}\,d\mu(b).
\]
Each Hilbertian tensor product $\fH_{\pi_b}\otimes^2\wbar{\fH}_{\pi_b}$ is identified with the
space of Hilbert-Schmidt operators on $\fH_{\pi_b}$, above.
\end{proposition}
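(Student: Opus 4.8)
The plan is to treat the statement as a specialization of the general non-unimodular Plancherel theorem, verified in three stages: a structural reduction, a Mackey--orbit parametrization, and the construction of the semi-invariant operators together with the Plancherel identity. First I would record the structural input. The modular function $\Del\colon G\to\Ree_{>0}$ is a continuous homomorphism, so $K=\ker\Del$ is a closed, normal, unimodular subgroup, while $G/K\cong\Del(G)$ is a connected subgroup of $\Ree_{>0}$ and hence is either trivial or isomorphic to $\Ree$; in either case it is abelian. Together with the hypotheses that $K$ is type I and that $G$ acts regularly on $\what{K}$, this places us squarely within Mackey's normal subgroup analysis, and it is this analysis that will generate all the data of the statement.

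Next I would build the parametrization. One decomposes $\what{K}$ into $G$-orbits and uses the assumed Borel cross-section to obtain a standard Borel parameter space $B$ and a Borel field $b\mapsto\pi_b$ of irreducibles. On the generic (free) orbits the associated irreducible of $G$ is induced up from $K$; applying induction in stages, together with a monomial realization of the type I irreducibles of $K$ supplied by the orbit method, exhibits each $\pi_b$ as induced from a closed subgroup of $K$. This produces the dense Borel set $S(\what{G})$ and the parametrization $b\mapsto\pi_b\colon B\to S(\what{G})$. The measure $\mu$ is then the Plancherel measure arising from the central decomposition of the group von Neumann algebra $VN(G)$, which is type I precisely because $G$ is.

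Finally I would handle the non-unimodular correction and uniqueness. The failure of unimodularity forces the positive, generally unbounded, formal-degree (Duflo--Moore) operators $\del_b$ on $\fH_{\pi_b}$, and the covariance identity $\pi_b(g)\del_b\pi_b(g^{-1})=\Del(g)^{-1}\del_b$ is exactly what makes the twisted transform intertwine the left regular representation with $\int^\oplus_B\pi_b\otimes 1\,d\mu(b)$. I would then verify the Parseval identity for $u$ in $\fC_c(G)$, so that $u\mapsto(\what{u}(\pi_b)\del_b^{1/2})_b$ is isometric, and extend it by density to a unitary onto $\int^\oplus_B\fH_{\pi_b}\otimes^2\wbar{\fH}_{\pi_b}\,d\mu(b)$, each fibre identified with the Hilbert--Schmidt operators; uniqueness of the triple up to measure equivalence is inherited from uniqueness of the central decomposition of the type I algebra $VN(G)$. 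The hard part is the heart of Tatsuuma's theorem: producing the operators $\del_b$ with the precise normalization that renders the transform an isometry, and carrying the measurable-field bookkeeping through the direct integral. As this is classical, I would in the end invoke \cite{tatsuuma} and \cite{folland} for these points rather than reproduce them.
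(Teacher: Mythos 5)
Your proposal is consistent with the paper, which in fact gives no proof at all: the proposition is stated explicitly as a summary of Tatsuuma's non-unimodular Plancherel theorem \cite{tatsuuma}, presented in the style of \cite[(7.50)]{folland}, precisely the two references you defer to for the substantive points. Your Mackey-analysis outline is a reasonable sketch of what lies inside those citations, so the approaches coincide.
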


Hence the left regular
representation $\lam$ of $G$ admits disintegration up to unitary equivalence, and
quasi-equivalence respectively, as
\begin{equation}\label{eq:dislrr}
\lam\cong\int^\oplus_B \pi_b\otimes I_{\wbar{\fH}_{\pi_b}}\,d\mu(b)\quad\text{and}\quad
\lam\simeq\int^\oplus_B \pi_b \,d\mu(b).
\end{equation}
In (\ref{eq:dislrr}) we need to concern ourselves with only the equivalence class of $\mu$
in the relation of mutual absolute continuity of measures on $B$.
Furthermore, the left regular representation of $G\times G$
may now be represented by the quasi-equivalence
\begin{equation}\label{eq:dislrrp}
\lam\times\lam\simeq\int^\oplus_{B\times B} \pi_b\times \pi_{b'}\,d(\mu\times\mu)(b,b')
\end{equation}
where we use Kroenecker products of representations.  Hence if $v\in L^1(G)$,
then the operator on $A(G\times G)$ given by
$E_v(u)=\int_G u(g,g^{-1})v(g)\,dg$ may be represented by the operator field
\begin{equation}\label{eq:ev}
E_v(b,b')=\int_G v(g)\pi_b(g)\otimes\pi_{b'}(g^{-1})\,dg\quad\text{ for }b,b'\text{ in }B.
\end{equation}

If $G$ is unimodular, we set each $\del_b=I_{\fH_{\pi_b}}$.  When $G$ is not unimodular,
and $\pi_b$ is induced from a character $\chi$ of abelian subgroup $H$ of $\ker\Del$, then
$\fH_{\pi_b}$ may be identified with a completion of $\fF_{\pi_b}=\{f\in \fC(G):f(gh)=\wbar{\chi(h)}f(g)\}$,
and $\del_b$ with multiplication on the latter space by $\Del(\cdot)^{-1}$; compare with the description
in \cite[(7.49)]{folland}.

We also have for any $g$ in $G$, the Fourier inversion formula of \cite[Corollary 2]{tatsuuma}:
for $w=\langle{\lam(\cdot)}u|v\rangle$, where $u,v\in\fC_c(G)$, we have
\begin{equation}\label{eq:fourierinv}
w(g)=\int_B\tr(\pi_b(g^{-1})\hat{w}(\pi_b)\del_b)\,d\mu(b)
=\int_B\tr(\pi_b(g)\what{\check{w}}(\pi_b)\del_b)\,d\mu(b).
\end{equation}
By density of $\spn\lam(G)$ in $VN(G)$, and of each $\spn\pi_b(G)$ in $\fB(\fH_{\pi_b})$ (Schur's lemma),
we also have for any $T$ in $VN(G)$ the duality
formula
\begin{equation}\label{eq:fourierdual}
T(w)=\int_B\tr(T(b)\what{\check{w}}(\pi_b)\del_b)\,d\mu(b)
\end{equation}
where $T\simeq(T(b))_{b\in B}\in L^\infty(B,\mu;\fB(\fH_{\pi_b}))$.

Let $G$ be a connected Lie group and $\pi\in\what{G}$.
It is well known that for any $X$ in the Lie algebra $\fg$ of $G$,
and any $\pi$ in $\what{G}$, there is a dense subspace
$\fH_\pi^X$ of vectors $\xi$ for which
\begin{equation}\label{eq:dpi}
d\pi(X)\xi=\lim_{h\to 0}\frac{1}{h}[\pi(\exp(hX))-I]\xi
\end{equation}
exists.  For example $\fH_\pi^X\supseteq\fH_\pi^\fD=\spn\{\hat{u}(\pi)\xi:u\in\fD(G),\xi\in\fH_\pi\}$.
Thus $d\pi(X)$ is an (unbounded) operator on $\fH_\pi$.  Given the parameterization
$b\mapsto \pi_b$ above, it can be checked
that, as the limit of a measurable field of operators, $(d\pi_b(X))_{b\in B}$
is also measurable.

\begin{lemma}\label{lem:derfac}
Suppose that $G$ is a connected Lie group with Lie algebra $\fg$, and $X$ in $\fg$ is
such that, for each $b$ in $B$, there is a subspace
$\fF_b$ of $\fH^X_{\pi_b}$ which is dense in $\fH_{\pi_b}$, and for which
$\del_b^{-1}\fF_b\subseteq\fH_{\pi_b}^X$.   If
$T$  in $VN(G)$ satisfies that $(S(b))_{b\in B}=(T(b)d\pi_b(X))_{b\in B}$
is a bounded field of operators,  then for $u$ in $\fD(G)$ we have that
\[
S(u)=T(\partial_Xu).
\]
\end{lemma}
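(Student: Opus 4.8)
The plan is to express $S(u) = T(\partial_X u)$ via the duality formula \eqref{eq:fourierdual}, reducing the identity to a computation on the Fourier transform side that relates differentiation in $\fD(G)$ to the operator $d\pi_b(X)$ acting fibrewise. First I would write $u = \langle \lam(\cdot)\xi | \eta\rangle$ with $\xi, \eta \in \fC_c(G)$ (or work directly with $u \in \fD(G)$, which lies in $A_c(G)$), so that \eqref{eq:fourierdual} applies to both $S(u)$ and $T(\partial_X u)$. The key computation is to determine how the Plancherel/Fourier transform intertwines the translation derivative $\partial_X$ with $d\pi_b(X)$. From \eqref{eq:der}, $\partial_X u(g) = \frac{d}{dt}u(g\exp(tX))|_{t=0}$, which is right differentiation; applying the Fourier transform $\what{\,\cdot\,}(\pi_b) = \int_G (\cdot)(g)\pi_b(g)\,dg$ and using $\pi_b(g\exp(tX)) = \pi_b(g)\pi_b(\exp(tX))$ together with \eqref{eq:dpi}, I expect to obtain $\what{\partial_X u}(\pi_b) = \what{u}(\pi_b)\,d\pi_b(X)$, up to the bookkeeping forced by the $\check{\,\cdot\,}$ appearing in \eqref{eq:fourierdual}.

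The main subtlety, and where the hypotheses of the lemma do their work, is that \eqref{eq:fourierdual} involves $\what{\check{w}}(\pi_b)\del_b$ rather than $\what{w}(\pi_b)$ alone, so the non-unimodular weight $\del_b$ must be carried through the differentiation-by-parts step. This is precisely why the lemma demands the dense subspace $\fF_b \subseteq \fH^X_{\pi_b}$ with $\del_b^{-1}\fF_b \subseteq \fH^X_{\pi_b}$: one needs $d\pi_b(X)$ to be defined not only on the natural domain $\fH^X_{\pi_b}$ but also after composing with $\del_b^{-1}$, so that the formal manipulation $\tr(T(b)d\pi_b(X)\,\what{\check{u}}(\pi_b)\del_b) = \tr(S(b)\,\what{\check{u}}(\pi_b)\del_b)$ is legitimate on a dense set of vectors and the two measurable operator fields agree $\mu$-almost everywhere. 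I would verify that $\what{\check{u}}(\pi_b)\del_b$ maps into $\fF_b$ (or can be approximated by vectors there) so that $d\pi_b(X)$ may be transferred from the $u$-side onto $T(b)$, producing the field $T(b)d\pi_b(X) = S(b)$ assumed bounded in the hypothesis.

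Assembling these, I would compute
\[
T(\partial_X u) = \int_B \tr\bigl(T(b)\,\what{\check{\partial_X u}}(\pi_b)\,\del_b\bigr)\,d\mu(b)
= \int_B \tr\bigl(T(b)\,d\pi_b(X)\,\what{\check u}(\pi_b)\,\del_b\bigr)\,d\mu(b),
\]
where the second equality is the intertwining identity applied to $\check{\partial_X u}$ after accounting for how the check operation and the weight $\del_b$ interact with differentiation. Recognizing the integrand as $\tr(S(b)\,\what{\check u}(\pi_b)\,\del_b)$ and invoking the boundedness of $(S(b))_{b \in B}$ to re-apply \eqref{eq:fourierdual} in the reverse direction then yields $T(\partial_X u) = S(u)$, as desired.

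The hard part will be the justification of the integration-by-parts at the level of the disintegrated fields: one must ensure that all the unbounded operators $d\pi_b(X)$, the weights $\del_b$, and their composites are simultaneously densely defined and that the trace manipulations are valid fibrewise for $\mu$-almost every $b$, rather than merely formally. The density and invariance hypotheses on $\fF_b$ are tailored exactly to license this, so the essential work lies in confirming that the Fourier transforms $\what{\check u}(\pi_b)$ land in the appropriate domains and that the resulting field identity holds almost everywhere with respect to $\mu$.
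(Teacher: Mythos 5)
Your proposal is correct and takes essentially the same route as the paper's proof: both reduce the claim via the duality formula \eqref{eq:fourierdual} applied to $S$ and to $T$, and then establish the fibrewise intertwining identity $\what{(\partial_X u)^\vee}(\pi_b)\del_b = d\pi_b(X)\,\what{\check{u}}(\pi_b)\,\del_b$ on the dense set of vectors $\del_b^{-1}\fF_b$ supplied by the hypothesis, which is exactly your final display. The computation you defer as "the hard part" is handled in the paper by writing $\partial_X u(g^{-1})$ as a difference quotient and making the left-invariant substitution $g\mapsto\exp(hX)g$ (with weak limits and dominated convergence); this also resolves your hedged intermediate guess $\what{\partial_X u}(\pi_b)=\what{u}(\pi_b)\,d\pi_b(X)$ in the way your last display already anticipates --- it is precisely the check operation that converts the right derivative into a left translation, placing $d\pi_b(X)$ on the \emph{left} of $\what{\check{u}}(\pi_b)\del_b$, adjacent to $T(b)$, so that the bounded field $S(b)=T(b)d\pi_b(X)$ can be recognized.
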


\begin{proof}  Recall that as in (\ref{eq:der}), the symbol $\partial_X$ denotes a derivative
on the right.
First we fix $\pi$.  If $\xi\in\del_b^{-1}\fF_b$ then we have for $u$ in $\fD(G)$ that
\begin{align*}
&\what{(\partial_X u)^\vee}(\pi_b)\del_b\xi
=\int_G \partial_X u(g^{-1})\pi_b(g)\del_b\xi\, dg \\
&\quad=\lim_{h\to\infty}\frac{1}{h}\left[\int_G u(g^{-1}\exp(hX))\pi_b(g)\del_b\xi\,dg
-\int_G u(g^{-1})\pi_b(g)\del_b\xi\,dg\right] \\
&\quad=\lim_{h\to\infty}\frac{1}{h}\left[\int_G u(g^{-1})\pi_b(\exp(hX)g)\del_b\xi\,dg
-\int_G u(g^{-1})\pi_b(g)\del_b\xi\,dg\right] \\
&\quad=\left(\lim_{h\to\infty}\frac{1}{h}[\pi_b(\exp(hX)-I]\right)\int_Gu(g^{-1})\pi_b(g)\del_b\, dg\xi  \\
&\quad= d\pi_b(X)\what{\check{u}}(\pi_b)\del_b\xi
\end{align*}
where in the second through fourth lines, the limit is understood in the weak sense, and we may use
dominated convergence theorem on associated scalar integrals.
Since $\del_b^{-1}\fF_b$ is dense in $\fH_{\pi_b}$,  the computation
above, coupled with (\ref{eq:fourierdual}), tells us that
\begin{align*}
S(u)&=\int_B \tr(T(b)d\pi_b(X)\what{\check{u}}(\pi_b)\del_b)\,d\mu(b) \\
&=\int_B \tr(T(b)\what{(\partial_Xu)^\vee}(\pi_b)\del_b)\,d\mu(b)=T(\partial_Xu).
\end{align*}
\end{proof}

We now will embark on using Lemma \ref{lem:derfac} on the group $G\times G$
and operator fields with fibres $d(\pi_b\times\pi_{b'})(X,0)=d\pi_b(X)\otimes I_{\fH_{b'}}$ ($X\in \fg$)
to verify the conditions of Lemma \ref{lem:basic}.

As a first illustration, let us apply these methods to
the special unitary group $\su(2)$.  We note that $\check{\Del}_{\su(2)}$ is a set
of non-synthesis is known (see \cite{forrestss1}).
We recall the well known fact that  $\su(2)$ admits
as its Lie algebra $\fsu(2)$, which may be identified with $2\times 2$-complex
matrices which are skew-Hermitian and trace zero.  The representation
theory of $\su(2)$ is well-known:  $\what{\su}(2)=\{\pi_n:n=0,1,2,\dots\}$,
each $\pi_n$ acts on a space of dimension $n+1$, and if $g\cong\diag(z,\bar{z})$
($\cong$ is similarity by conjugation in $\su(2)$) then $\pi_n(g)\cong\diag(z^n,z^{n-2},\dots,
z^{-n})$.

\begin{proposition}\label{prop:sutwo}
Let $X$ be any non-zero element of $\fsu(2)$.  Then there is an $S=S_{X,1}$ in $VN(\su(2))$
for which
\[
S(u)=\int_G \partial_{(X,0)}u(g,g^{-1})\,dg
\]
for $u$ in $\fD(\su(2)\times\su(2))$.
\end{proposition}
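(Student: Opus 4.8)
The plan is to produce $S$ by applying Lemma \ref{lem:derfac} on the product group $G\times G$, with $G=\su(2)$, the Lie algebra element $(X,0)$, and the operator field attached to $E_1$ in (\ref{eq:ev}). Since $G$ is compact we have $\what{G}=\{\pi_n:n\geq 0\}$, Plancherel measure $\mu(\{n\})=n+1=\dim\fH_{\pi_n}$, and (unimodularity) $\del_n=I_{\fH_{\pi_n}}$; by (\ref{eq:dislrrp}) the index set is $\{(n,m)\}$ and $d(\pi_n\times\pi_m)(X,0)=d\pi_n(X)\otimes I_{\fH_{\pi_m}}$. Following the remark preceding the statement, I would \emph{define} $S$ to be the element of $VN(G\times G)$ given by the field
\[
S(n,m)=E_1(n,m)\bigl(d\pi_n(X)\otimes I_{\fH_{\pi_m}}\bigr),\qquad
E_1(n,m)=\int_G\pi_n(g)\otimes\pi_m(g^{-1})\,dg ,
\]
so that the whole statement reduces to (i) showing this field is uniformly bounded, whence it genuinely lies in $VN(G\times G)$, and (ii) reading off the identity $S(u)=E_1(\partial_{(X,0)}u)=\int_G\partial_{(X,0)}u(g,g^{-1})\,dg$ from Lemma \ref{lem:derfac}.

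First I would compute $E_1(n,m)$ explicitly. Expanding $\langle E_1(n,m)(\xi\otimes\eta)|\xi'\otimes\eta'\rangle$ and using $\langle\pi_m(g^{-1})\eta|\eta'\rangle=\overline{\langle\pi_m(g)\eta'|\eta\rangle}$, the Schur orthogonality relations for the compact group $G$ force the integral to vanish unless $n=m$, and for $n=m$ to equal $\frac{1}{n+1}\langle\xi|\eta'\rangle\langle\eta|\xi'\rangle=\frac{1}{n+1}\langle\eta\otimes\xi|\xi'\otimes\eta'\rangle$. Hence $E_1(n,m)=0$ for $n\neq m$, while $E_1(n,n)=\frac{1}{n+1}F_n$, where $F_n$ is the flip $\xi\otimes\eta\mapsto\eta\otimes\xi$ on $\fH_{\pi_n}\otimes\fH_{\pi_n}$. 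As a check, $\tr E_1(n,n)=\int_G|\chi_{\pi_n}|^2\,dg=1=\frac{1}{n+1}\tr F_n$, and $\sup_{n,m}\norm{E_1(n,m)}_{VN}=1$, recovering $E_1\in VN(G\times G)$.

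Now comes the decisive estimate. Since $F_n$ is unitary and the field is diagonal,
\[
\norm{S(n,n)}_{VN}=\frac{1}{n+1}\,\norm{d\pi_n(X)\otimes I_{\fH_{\pi_n}}}_{VN}=\frac{1}{n+1}\,\norm{d\pi_n(X)}_{VN},\qquad S(n,m)=0\ (n\neq m).
\]
To evaluate the remaining operator norm for arbitrary nonzero $X$, I would use that every nonzero $X\in\fsu(2)$ is $\operatorname{Ad}(\su(2))$-conjugate to a nonzero multiple $tH$ of the Cartan generator $H$; conjugation invariance of the operator norm then gives $\norm{d\pi_n(X)}_{VN}=|t|\,\norm{d\pi_n(H)}_{VN}$. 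From $\pi_n(\exp(sH))\cong\diag(e^{ins/2},\dots,e^{-ins/2})$ one reads that $d\pi_n(H)$ is diagonal with eigenvalues $\tfrac{i}{2}(n-2k)$, $0\leq k\leq n$, so $\norm{d\pi_n(H)}_{VN}=n/2$ and $\norm{S(n,n)}_{VN}=\frac{|t|}{2}\cdot\frac{n}{n+1}\leq\frac{|t|}{2}$. The field is thus uniformly bounded. This cancellation --- the factor $\frac{1}{n+1}$ from Schur orthogonality exactly absorbing the linear growth of $\norm{d\pi_n(X)}_{VN}$ --- is the crux of the computation, and is the step I expect to be the delicate one to set up correctly; it is precisely the mechanism by which the ``evaluate the derivative on the anti-diagonal'' functional stays bounded even though differentiation is unbounded on $A(G)$.

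Finally I would verify the hypotheses of Lemma \ref{lem:derfac} for $T=E_1$ on $G\times G$. Because $G$ is compact each $\fH_{\pi_n}\otimes\fH_{\pi_m}$ is finite dimensional, so $\fH^{(X,0)}_{\pi_n\times\pi_m}$ is the whole space and the condition on the dense subspace (together with $\del^{-1}\fF\subseteq\fH^{(X,0)}$, trivial since $\del_n=I$) holds automatically. The lemma then yields $S(u)=E_1(\partial_{(X,0)}u)$ for $u\in\fD(G\times G)$, and since $E_1$ represents $w\mapsto\int_G w(g,g^{-1})\,dg$ by (\ref{eq:ev}), this is exactly $S(u)=\int_G\partial_{(X,0)}u(g,g^{-1})\,dg$, as required. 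Feeding this $S$ into Lemma \ref{lem:basic} then records the failure of local synthesis of $\check{\Del}_G$ for $\su(2)$, consistent with the known failure of synthesis in this case.
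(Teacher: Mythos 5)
Your proof is correct and takes essentially the same route as the paper's: compute $E_1(n,n')$ by Schur orthogonality (getting $\delta_{n,n'}\frac{1}{n+1}T_n$, where the paper's permutation matrix $T_n$ is exactly your flip $F_n$), observe that the factor $\frac{1}{n+1}$ absorbs the linear growth $\norm{d\pi_n(X)}_{VN}=O(n)$ of the derived representation, and then appeal to Lemma \ref{lem:derfac}. Your additional details --- identifying $T_n$ as the flip, the trace consistency check, and the explicit verification of the hypotheses of Lemma \ref{lem:derfac} via finite-dimensionality and unimodularity --- merely spell out what the paper leaves implicit.
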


\begin{proof}
Since $X^*=-X$ and $\tr X=0$, there is $x$ in $\Ree$ for which we have equivalence
$X\cong\diag(ix,-ix)$,
hence $\exp(hX)\cong\diag(e^{ihx},e^{-ihx})$ in $\su(2)$.  It is immediate that
\[
d\pi_n(X)\cong\diag(inx,i(n-2)x,\dots,-inx).
\]
The Schur orthogonality relations immediately give, in the notation of (\ref{eq:ev}) with $v=1$, that
\[
E_1(n,n')=\int_{\su(2)}\pi_n(g)\otimes\pi_{n'}(g)^*\,dg=\begin{cases} \frac{1}{n+1}T_n &\text{if }n=n' \\
0 &\text{otherwise}\end{cases}
\]
where $T_n$ is an $(n+1)^2\times(n+1)^2$ permutation matrix with respect to some basis.
It is then obvious that
\[
\norm{E_1(n,n')(d\pi_n(X)\otimes I)}\leq |x|
\]
for each $n,n'$.  We appeal to Lemma \ref{lem:derfac}.
\end{proof}

\subsection{Two unimodular groups}
We let $E=\Cee\rtimes\Tee$ be the 3-dimensional Euclidean motion group
with multiplication and inversion given by
\[
(x,z)(x',z')=(x+zx',zz')\quad\text{and}\quad (x,z)^{-1}=(-\bar{z}x,\bar{z}).
\]
This group is unimodular with Haar integral the same as that on $\Cee\times\Tee$.

The following data are well known; see, for example \cite[IV]{sugiura}.  The additive
group $\Cee=\Ree^2$ admits a real inner product $(x,x')\mapsto x\cdot x'=\re x\re x'+\im x\im x'$.
For each $a$ in $\Cee\setminus\{0\}$ we obtain an irreducible unitary representation $\pi_a$
by inducing from the character $\chi_a$ ($\chi_a(x)=e^{-ix\cdot a}$).  Then
$\pi_a\cong\pi_b$ if and only if $|a|=|b|$.  Hence we parameterize this family by $r$ in $(0,\infty)$.
Each representation is given
\[
\pi_r:E\to\fU(L^2(\Tee)),\quad \pi_r(x,z)\xi(w)=e^{-ix\cdot(rw)}\xi(\bar{z}w).
\]
Then the disintegration formula  (\ref{eq:dislrr}) takes the form
\[
\lam\simeq\int_{(0,\infty)}^\oplus \pi_r\,r\,dr\simeq\int_{(0,\infty)}^\oplus \pi_r\,dr
\]
where the middle formula is with respect to the group's Plancherel measure which
is mutually absolutely equivalent to Lebesgue measure on $(0,\infty)$.

We recall that $E$ has Lie algebra
\[
\fe=\langle T,X_1,X_2 : [T,X_1]=X_2,[T,X_2]=-X_1,[X_1,X_2]=0\rangle
\]
where $\exp(hX_1)=(h,1)$, $\exp( hX_2)=(ih,1)$ and $\exp(hT)=(0,e^{ih})$.

We also consider the Sobolev-type space
\[
H^{2,1}(\Ree^2)=\{v:\Ree^2\to\Cee\;|\;v,\partial_x^2v\in L^1(\Ree^2),
\text{ for all }x\in\Ree^2\}
\]
where $\partial_xv(y)=\left.\frac{d}{dt}v(y+tx)\right|_{t=0}$, and the second order derivatives
may be considered in the distributional sense.

\begin{theorem}\label{theo:euclmot}
Let $\til{v}\in H^{2,1}(\Cee)=H^{2,1}(\Ree^2)$, and $v$ in $L^1(E)$ be given by $v(x,z)=\til{v}(x)$.
Let $X\in\spn\{X_1,X_2\}$.   Then there is an $S=S_{X,v}$ in $VN(E)$
for which
\[
S(u)=\int_E \partial_{(X,0)}u(g,g^{-1})v(g)\,dg
\]
for $u$ in $\fD(E\times E)$.
\end{theorem}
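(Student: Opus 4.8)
The plan is to produce $S$ as an element of $VN(E\times E)$ by exhibiting its operator field and invoking Lemma~\ref{lem:derfac} on the group $E\times E$ with the Lie algebra element $(X,0)$ and with $T=E_v$, the functional of (\ref{eq:ev}). Since $E$ is unimodular we may take every $\del_b=I$, so the subspace hypotheses of Lemma~\ref{lem:derfac} are automatic; indeed $d\pi_r(X)$ will turn out to be bounded for each fixed $r$, so $\fH^{(X,0)}_{\pi_r\times\pi_{r'}}$ is all of $L^2(\Tee\times\Tee)$. Because $E_v(u)=\int_E u(g,g^{-1})v(g)\,dg$ is bounded by $\norm{v}_1$ on $A(E\times E)$, it defines an element of $VN(E\times E)$ with field $(E_v(r,r'))$. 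Thus once we know the field $(E_v(r,r')(d\pi_r(X)\otimes I))_{(r,r')}$ is bounded, Lemma~\ref{lem:derfac} gives an $S$ in $VN(E\times E)$ with $S(u)=E_v(\partial_{(X,0)}u)=\int_E\partial_{(X,0)}u(g,g^{-1})v(g)\,dg$, exactly the required identity (whereupon Lemma~\ref{lem:basic} applies). The whole problem is therefore the uniform boundedness of this field.

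First I would compute the two factors. Writing $X=aX_1+bX_2$ and $\alpha=a+ib\in\Cee$, we have $\exp(hX)=(h\alpha,1)$, so from (\ref{eq:dpi}) and $\pi_r(x,z)\xi(w)=e^{-ix\cdot(rw)}\xi(\bar z w)$ one finds that $d\pi_r(X)$ is multiplication by $-ir(\alpha\cdot w)$ on $L^2(\Tee)$; it is bounded with norm at most $r|\alpha|$, but the field $(d\pi_r(X))_r$ is visibly unbounded in $r$. Evaluating (\ref{eq:ev}) by carrying out the $x$-integration first, using $(\bar z x)\cdot(r'w')=x\cdot(r'zw')$ and the transform $\hat{\til v}(\xi)=\int_\Cee\til v(x)e^{-ix\cdot\xi}\,dx$, gives
\[
E_v(r,r')F(w,w')=\int_\Tee\hat{\til v}(rw-r'zw')\,F(\bar z w,zw')\,dz .
\]
The key structural observation is that this operator preserves the product $ww'$: the integrand only moves $(w,w')$ along the curve $\{(u,u'):uu'=ww'\}$. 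Hence, decomposing $L^2(\Tee\times\Tee)=\int^\oplus_\Tee L^2(\{uu'=p\})\,dp$, the composition $S(r,r')=E_v(r,r')(d\pi_r(X)\otimes I)$ is diagonal, and on the fibre $p$ parametrized by $w\in\Tee$ (with $w'=p\bar w$) it becomes the integral operator $S_p$ on $L^2(\Tee)$ with kernel
\[
K_p(w,u)=-ir\,(\alpha\cdot u)\,\hat{\til v}(rw-r'p\bar u),
\]
so that $\norm{S(r,r')}$ is the essential supremum over $p$ of $\norm{S_p}$.

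The main obstacle, and the only place the hypothesis on $\til v$ enters, is the bound $\sup_{r,r',p}\norm{S_p}<\infty$. Here I would use that $\til v\in H^{2,1}(\Cee)$ forces decay of $\hat{\til v}$: since $\til v$ and all second directional derivatives lie in $L^1$, both $\hat{\til v}$ and $|\xi|^2\hat{\til v}(\xi)$ are bounded, whence $|\hat{\til v}(\xi)|\leq C(1+|\xi|^2)^{-1}$. I would then apply Schur's test to $K_p$. Since $|\alpha\cdot u|\leq|\alpha|$ and $|rw-r'p\bar u|^2=r^2+r'^2-2rr'\cos\beta$ for an angle $\beta$ running over the full circle as $u$ (or $w$) varies, each row and column sum is dominated by
\[
r|\alpha|\,C\int_0^{2\pi}\frac{d\beta}{(1+r^2+r'^2)-2rr'\cos\beta}
=\frac{2\pi r|\alpha|\,C}{\sqrt{(1+(r-r')^2)(1+(r+r')^2)}},
\]
using $\int_0^{2\pi}(A-B\cos\beta)^{-1}\,d\beta=2\pi(A^2-B^2)^{-1/2}$ with $A=1+r^2+r'^2$, $B=2rr'$. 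The elementary inequality $r\leq r+r'\leq\sqrt{1+(r+r')^2}$ bounds the right-hand side by $2\pi|\alpha|C$, independently of $r,r',p$. Thus Schur's test yields $\norm{S_p}\leq 2\pi|\alpha|C$ uniformly, the field is bounded, and the proof concludes as above. The heart of the matter is exactly this cancellation: the linear growth in $r$ of $d\pi_r(X)$ is compensated by the decay that $H^{2,1}$-regularity of $\til v$ forces on $E_v(r,r')$.
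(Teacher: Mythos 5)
Your proof is correct and takes essentially the same route as the paper: the paper likewise applies Lemma~\ref{lem:derfac} to $E\times E$ with $T=E_v$, derives the decay $|\what{\til{v}}(y)|\le C_v/(1+|y|^2)$ from the $H^{2,1}$ hypothesis, computes $d\pi_r(X)$ as multiplication by $-irx\cdot w$, and establishes uniform boundedness of the field $E_v(r,r')(d\pi_r(X)\otimes I)$ by evaluating the very same integral $\int_0^{2\pi}(1+r^2+r'^2-2rr'\cos t)^{-1}\,dt$. Your direct-integral decomposition along the level sets of $ww'$ is exactly what the paper implements by conjugating with the unitary $U\xi(w,w')=\xi(w,ww')$, and your Schur test is their Cauchy--Schwarz/Tonelli estimate, so the two arguments coincide in substance.
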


\begin{proof}
First, for $\xi$ in $L^2(\Tee)$, (\ref{eq:ev}) provides that
\begin{align*}
E_v(r,r')\xi(w,w') &=\int_\Tee\int_\Cee \til{v}(x)e^{-ix\cdot(rw-r'zw')}\xi(\bar{z}w,zw')\,dx\,dz \\
&=  \int_\Tee2\pi \what{\til{v}}(rw-r'zw')\xi(\bar{z}w,zw')\,dz
\end{align*}
where $\what{\til{v}}$ is the Fourier transform of $\til{v}$ on $\Cee=\Ree^2$.  Consider
the unitary $U$ on $L^2(\Tee^2)$ given by $U\xi(w,w')=\xi(w,ww')$, which has
adjoint given by $U^*\xi(w,w')=\xi(w,\bar{w}w')$.  We have
\begin{align*}
U^*&E_v(r,r')U\xi(w,w')=E_v(r,r')U\xi(w,\bar{w}w')  \\
&= 2\pi\int_\Tee \what{\til{v}}(rw-r'z\bar{w}w')\xi(\bar{z}w,w')\,dz
= 2\pi\int_\Tee \what{\til{v}}(rw-r'zw')\xi(\bar{z},w')\,dz
\end{align*}
The fact that we choose $\til{v}$ from $H^{2,1}(\Cee)$ allows that there is a constant $C_v$,
such that for any $y$ in $\Cee$ we have
\[
|\what{\til{v}}(y)|\leq\frac{C_v}{1+|y|^2}.
\]
In fact we could choose $C_v=\|\what{\til{v}}\|_\infty+\|\what{L\til{v}}\|_\infty$,
where $L=\partial_{x_1}^2+\partial_{x_2}^2$ is the Laplacian on $\Ree^2\cong
\Cee$.  Hence we estimate
\begin{align*}
&\norm{U^*E_v(r,r')U\xi(w,w')}_2^2  \leq \int_{\Tee^2}\left[2\pi C_v \int_\Tee \frac{|\xi(\bar{z},w')|}{1+|rw-r'zw'|^2}\,dz\right]^2\,d(w,w') \\
&\quad\leq (2\pi C_v)^2\int_\Tee\int_\Tee \left[\int_\Tee \frac{dz}{1+|r-r'zw'\bar{w}|^2}\right]
\left[\int_\Tee \frac{|\xi(\bar{z},w')|^2\,dz}{1+|r-r'w'z\bar{w}|^2}\right]\,dw\,dw' \\
&\quad\leq (2\pi C_v)^2\left[\int_\Tee \frac{dz}{1+|r-r'z|^2}\right]^2\norm{\xi}_2^2
\end{align*}
where we have used the Cauchy-Schwarz inequality in the second line, and Tonelli's
theorem, in the third. Hence
\begin{equation}\label{eq:nerr}
\norm{E_v(r,r')}=\norm{U^*E_v(r,r')U}\leq \int_\Tee \frac{2\pi C_v\, dz}{1+|r-r'z|^2}
\text{ for }r,r'>0.
\end{equation}

Now we consider for each $r>0$ the operator $d\pi_r(X)$.  If
$\xi\in L^2(\Tee)$ then for $(x,1)=\exp X$ we have
\[
d\pi_r(X)\xi(w)
=\lim_{h\to 0}\frac{1}{h}[e^{-i(hx)\cdot(rw)}-1]\xi(w)=-irx\cdot w\xi(w)
\]
where convergence is uniform in $w$. Hence by (\ref{eq:dpi}) $d\pi_r(X)$ is the multiplication
operator by $w\mapsto -irx\cdot w$.  In particular $d\pi_r(X)$ is bounded
with $\norm{d\pi_r(X)}\leq r|x|$.  Combining with (\ref{eq:nerr}) we see that
\[
\norm{E_v(r,r')(d\pi_r(X)\otimes I)}\leq  \int_\Tee \frac{2\pi C_vr\,dz}{1+|r-r'z|^2}
=\int_0^{2\pi}\frac{C_vr\,dt}{1+r^2+{r'}^2-2rr'\cos t}.
\]
However, using either methods of complex analysis, or
the table of integrals \cite[2.553-3]{gradshteynr}, we
obtain that the latter integral is equal to the first expression in the elementary estimate
\[
\frac{C_vr}{2\sqrt{(1+r^2+{r'}^2)^2-4(rr')^2}}\leq \frac{C_vr}{2\sqrt{1+2(r^2+{r'}^2)}}
\]
which is clearly uniformly bounded in $r$ and $r'$.
Hence, by Lemma \ref{lem:derfac}, we are done.
\end{proof}

Now we consider the reduced Heisenberg group $\Hee^r=(\Ree\times\Tee)\rtimes\Ree$ with
multiplication and inversion given by
\[
(y,z,x)(y',z',x')=(y+y',zz'e^{ixy'},x+x')\quad\text{and}\quad
(y,z,x)^{-1}=(-y,\bar{z}e^{ixy},-x).
\]
We identify the centre of $\Hee^r$ with $\Tee$.
The group is unimodular, and its Haar integral is the same as that on the product group
$\Ree\times\Tee\times \Ree$.
All of the infinite-dimensional irreducible representations are known to be
obtained by inducing from the characters $\chi_{0,n}$ on the normal subgroup $\Ree\times\Tee$,
$\chi_{0,n}(y,z)=\bar{z}^n$, for $n\in\Zee\setminus\{0\}$.
This follows, for example, from \cite[4.38]{kaniutht} and the fact that $\Hee^r$ is a quotient of the usual
Heisenberg group. For each $n$ in $\Zee\setminus\{0\}$, the representation is given by
\[
\pi_n:\Hee^r\to L^2(\Ree),\quad\pi_n(y,z,x)\xi(t)=\bar{z}^ne^{-inty}\xi(t-x).
\]
The left regular representation admits a decomposition
\begin{equation}\label{eq:heislrr}
\lam\simeq\bigoplus_{n\in\Zee}\pi_n
\end{equation}
where $\pi_0:\Hee^r\to L^2(\Ree^2)$ is, effectively, the left regular representation of
$\Hee^r/\Tee$.  Indeed, if we let $\fH_n=\{\xi\in L^2(\Hee^r):\xi(g(0,z,0))=\bar{z}^n\xi(g)\text{ for
a.e.\ }g\text{ in }\Hee^r\}$, then $\lam_n=\lam(\cdot)|_{\fH_n}$ has $\lam_n(0,z,0)=\bar{z}^n I$,
and is hence quasi-equivalent to $\pi_n$ by the Stone-von Neumann theorem (see, for example,
\cite[(6.49)]{folland}).

We note that the Lie algebra of $\Hee^r$ is given by
\[
\fh=\langle X,Y,Z:[X,Y]=Z,[Z,X]=0=[Y,Z]\rangle
\]
where $\exp(hX)=(0,1,h),\;\exp(hY)=(h,1,0)$ and $\exp(hZ)=(0,e^{ih},0)$.

\begin{theorem}\label{theo:heisenberg}
Let $v_1,v_2$ in $L^1(\Ree)$ be so that $v_1$ is (essentially) bounded and $\hat{v}_2\in L^1(\Ree)$,
and set $v(y,z,x)=v_1(x)v_2(y)$.  Then there is $S=S_{Z,v}$ in $VN(\Hee^r)$ for which
\[
S(u)=\int_{\Hee^r}\partial_Zu(g,g^{-1})v(g)\,dg
\]
for $u\in\fD(\Hee^r\times\Hee^r)$.
\end{theorem}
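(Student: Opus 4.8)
The plan is to apply Lemma~\ref{lem:derfac} on the group $\Hee^r\times\Hee^r$, taking $T=E_v$ (the operator field of (\ref{eq:ev})) and the central direction $(Z,0)\in\fh\times\fh$, so that the resulting $S=S_{Z,v}$ automatically satisfies
\[
S(u)=E_v(\partial_{(Z,0)}u)=\int_{\Hee^r}\partial_{(Z,0)}u(g,g^{-1})v(g)\,dg
\]
and thereby meets the hypothesis of Lemma~\ref{lem:basic}, provided we check that the operator field $\big(E_v(b,b')\,(d\pi_b(Z)\otimes I)\big)$ is bounded. The first observation is that $Z$ is central, with $\pi_n(\exp hZ)=\pi_n(0,e^{ih},0)=e^{-inh}I$, so that $d\pi_n(Z)=-inI$ is a bounded scalar operator. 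Since $\Hee^r$ is unimodular we take $\del_b=I$ throughout, and the domain conditions of Lemma~\ref{lem:derfac} hold trivially (each $d\pi_n(Z)\otimes I$ is everywhere defined and bounded); the entire content thus reduces to the uniform estimate $\sup_{n\neq0}\norm{n\,E_v(n,n)}<\infty$.

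Next I would compute $E_v(n,n')$ from (\ref{eq:ev}) using $v(y,z,x)=v_1(x)v_2(y)$. Integrating the central variable $z\in\Tee$ produces the factor $\int_\Tee\bar z^n z^{n'}\,dz$, which forces $n=n'$; moreover the blocks of (\ref{eq:heislrr}) involving the summand $\pi_0$ carry $d\pi_0(Z)=0$ and so contribute the zero operator to the field. For $n=n'\neq0$, integrating the $y$-variable turns $v_2$ into its Fourier transform and yields, on $F\in L^2(\Ree^2)=\fH_{\pi_n}\otimes\fH_{\pi_n}$,
\[
E_v(n,n)F(t,s)=\int_\Ree v_1(x)\,\hat v_2\big(n(t-s+x)\big)\,F(t-x,s+x)\,dx.
\]
The change of variables $(t,s)\mapsto(a,b)=(t+s,t-s)$ is, after normalization, unitary on $L^2(\Ree^2)$ and leaves $a$ untouched while sending the displacement $(t-x,s+x)$ to $b\mapsto b-2x$; thus $E_v(n,n)$ becomes a direct integral over $a$ of a single kernel operator $K_n$ on $L^2(\Ree_b)$, whose kernel is expressible through $v_1$ and $\hat v_2(n(\cdot))$.

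The heart of the argument, and the step I expect to be the main obstacle, is the uniform estimate $\sup_{n\neq0}\norm{n\,K_n}<\infty$. I would obtain this from Schur's test applied to the kernel of $n\,K_n$: bounding the $v_1$-factor by $\norm{v_1}_\infty$ and, in each of the row and column integrals, substituting $\rho=n(\cdots)$ so that the prefactor $|n|$ is cancelled by the Jacobian and the remaining integral becomes $\int_\Ree|\hat v_2(\rho)|\,d\rho=\norm{\hat v_2}_1$. Both integrals are then bounded by a constant multiple of $\norm{v_1}_\infty\norm{\hat v_2}_1$, uniformly in $n$ --- which is precisely where the hypotheses $v_1\in L^\infty$ and $\hat v_2\in L^1$ are used (and, dropping the factor $|n|$, this also gives $\sup_n\norm{K_n}<\infty$, so that $E_v\in VN(\Hee^r\times\Hee^r)$). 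With the field $\big(E_v(n,n')(d\pi_n(Z)\otimes I)\big)=\big(-in\,\delta_{n,n'}E_v(n,n)\big)$ now seen to be bounded, Lemma~\ref{lem:derfac} produces the required $S$, verifying the hypothesis of Lemma~\ref{lem:basic}.
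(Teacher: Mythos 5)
Your proposal is correct and follows essentially the same route as the paper's proof: the same decomposition (\ref{eq:heislrr}), the Kronecker delta arising from integration over the central variable, the identity $d\pi_n(Z)=-inI$, and a uniform bound on $nE_v(n,n)$ fed into Lemma~\ref{lem:derfac}. The only cosmetic difference is that you diagonalize via the change of variables $(t,s)\mapsto(t+s,t-s)$ and invoke Schur's test, whereas the paper conjugates by the shear $U\xi(t,t')=\xi(t,t'+t)$ and runs the Cauchy--Schwarz/Tonelli estimate that is the standard proof of Schur's test; the crucial $1/|n|$ decay comes from the same substitution inside $\hat{v}_2$ in both arguments.
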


\begin{proof}
We allow (\ref{eq:heislrr}) to substitute for (\ref{eq:dislrr}) and we obtain a likewise
decomposition for $\lam\times\lam$.  Thus, using the appropriate analogue of (\ref{eq:ev})
we compute for $n,n'$ in $\Zee\setminus\{0\}$ that for $\xi$ in $L^2(\Ree^2)\cong
L^2(\Ree)\otimes^2 L^2(\Ree)$ that
\begin{align*}
E_v(n,n')\xi(t,t')&=
\int_\Ree\int_\Tee\int_\Ree v_1(x)v_2(y)\bar{z}^ne^{-inty}{\wbar{ze^{ixy}}}^{n'}e^{in't'y}\xi(t-x,t'+x)\,dy\,dz\,dx \\
&=\del_{n,n'}\int_\Ree\int_\Ree v_1(x)v_2(y)e^{-in(x+t-t')y}\xi(t-x,t'+x)\,dy\,dx \\
&=\sqrt{2\pi}\del_{n,n'}\int_\Ree v_1(x)\hat{v}_2(n(x+t-t'))\xi(t-x,t'+x)\,dx
\end{align*}
where $\del_{n,n'}$ is the Kroenecker delta symbol.  Likewise $E_v(n,0)=0$ for $n$ in
$\Zee\setminus\{0\}$.  We consider the unitary on $L^2(\Ree)$ given by
$U\xi(t,t')=\xi(t,t'+t)$.  We have for $n$ in $\Zee\setminus\{0\}$ that
\begin{align*}
U^*E_v(n,n)U\xi(t,t')&=E_v(n,n)U\xi(t,t'-t) \\
&=\sqrt{2\pi}\int_\Ree v_1(x)\hat{v}_2(n(x+2t-t'))\xi(t-x,t')\,dx \\
&=\sqrt{2\pi}\int_\Ree v_1(t-x)\hat{v}_2(n(-x+3t-t'))\xi(x,t')\,dx
\end{align*}
Hence we compute
\begin{align*}
\norm{U^*E_v(n,n)U\xi}_2^2
&\leq \int_\Ree\int_\Ree 2\pi \left[\int_\Ree |v_1(t-x)\hat{v}_2(n(-x+3t-t'))\xi(x,t')|\,dx\right]^2\,dt\,dt' \\
&\leq 2\pi \int_\Ree\int_\Ree\left[\int_\Ree |v_1(t-x)\hat{v}_2(n(-x+3t-t'))|\,dx\right] \\
&\qquad\quad\times  \left[\int_\Ree |v_1(t-x)\hat{v}_2(n(-x+3t-t'))||\xi(x,t')|^2\,dx\right]\,dt\,dt' \\
&\leq 2\pi \norm{v_1}^2_\infty\frac{\norm{\hat{v}_2}^2_1}{3n^2}\norm{\xi}^2_2
\end{align*}
where we have used the Cauchy-Schwarz inequality for the second inequality and Tonelli's theorem,
a H\"{o}lder inequality and a change of variables for the third. In summary
\begin{equation}\label{eq:ennnorm}
\norm{E_v(n,n')}
\leq \del_{n,n'}\frac{\sqrt{2\pi}}{\sqrt{3}n}\norm{v_1}_\infty\norm{\hat{v}_2}_1\text{, for }(n,n')
\text{ in }(\Zee\setminus\{0\})\times\Zee.
\end{equation}
It is trivial to see that
\[
d\pi_n(Z)=-in I \text{ for }n\text{ in }\Zee.
\]
Combining  with (\ref{eq:ennnorm}) we see that
\[
\sup_{(n,n')\in \Zee^2}\norm{E(n,n')(d\pi_n(Z)\otimes I)}<\infty
\]
and we may hence appeal to Lemma \ref{lem:derfac}.
\end{proof}


\subsection{Two non-unimodular groups}
We shall consider a class of groups we call {\it Gr\'{e}laud's groups}.
Fix a parameter $\theta>0$ and for $s$ in $\Ree$ let
\[
\tau(s)=\exp s\begin{bmatrix} 1 & -\theta \\ \theta & 1 \end{bmatrix}
=e^s\varrho(s)\text{, where }\varrho(s)
=\begin{bmatrix} \cos s\theta & -\sin s\theta \\  \sin s\theta& \cos s\theta  \end{bmatrix}.
\]
We now let $G_\theta=\Ree^2\rtimes_\tau\Ree$ with multiplication given by
\[
(x,s)(x',s')=(x+\tau(s)x',s+s')\text{, hence }(x,s)^{-1}=(-\tau(-s)x,-s).
\]
Notice that we have $\det\tau(s)=e^{2s}$ from which we
get left Haar integral,  for $u$ in $\fC_c(G_\theta)$, and modular function
\[
\int_G u(x,s)\,d(x,s)=\int_{\Ree^2}\int_\Ree u(x,s)e^{-2s}\,ds\,dx\quad\text{and}\quad
\Del(x,s)=e^{2s}.
\]
If $y\in\Ree^2$, with associated character $\chi_y(x)=e^{-ix\cdot y}$ on $\Ree^2$, we get
induced representation
\[
\pi_y:G_\theta\to \fU(L^2(\Ree)),\quad
\pi_y(x,s)=e^{-i(\tau(-t)x)\cdot y}\xi(t-s)
=e^{-ie^{-t}x\cdot (\varrho(t)y)}\xi(t-s).
\]
Notice that if $y'=\varrho(t')y$, then $\pi_y\cong\pi_{y'}$ via the intertwiner
$\rho(t')$, where $\rho$ is the left regular representation on $L^2(\Ree)$.
Hence we may parameterize these representations by the unit sphere
$\Ess^1$.  Furthermore, by \cite[7.35]{kaniutht}, the family $\{\pi_y\}_{y\in\Ess^1}$
is a dense, compact subset of $\what{G}_\theta$.

To learn the disintegration of the left regular representation (\ref{eq:dislrr}),
we will have to obtain the Plancherel formula for this group.  Since we do not
know of a reference for this, we compute it ourselves.  We use the notation
of Proposition \ref{prop:plancherel}.

\begin{proposition}
The Plancherel decomposition of $L^2(G_\theta)$ is given by
\[
L^2(G_\theta)\cong\int_{\Ess^1}^\oplus L^2(\Ree)_y\otimes^2 \wbar{L^2(\Ree)}_y\,d\nu(y)
\]
where each $L^2(\Ree)_y$ is a copy of $L^2(\Ree)$, $\nu$ is the unique
rotationally invariant probability measure on $\Ess^1$ and we have
for $\xi$ in $L^2(\Ree)_y$, $\del_y\xi(t)=\Del(t)^{-1}\xi(t)=e^{-2t}\xi(t)$ for a.e.\ $t$.
Hence we obtain
\[
\lam\simeq \int_{\Ess^1}^\oplus \pi_y\;d\nu(y)
\]
\end{proposition}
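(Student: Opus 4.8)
The plan is to realize $G_\theta$ as a regular semidirect product, run the Mackey machine to get the representations and the operators $\del_y$ almost for free, and then pin down the Plancherel measure by a direct Hilbert--Schmidt computation in which the nonunimodular weights conspire to produce the invariant measure.

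First I would record the Mackey-type data. The normal subgroup $N=\Ree^2$ is exactly $\ker\Del$, hence abelian and type I, and on $\what{N}\cong\Ree^2$ the dual action of $s\in\Ree$ sends $\chi_y$ to $\chi_{\tau(-s)^{T}y}$ with $\tau(-s)^{T}y=e^{-s}\varrho(s)y$. The orbits are therefore the logarithmic spirals $\{e^{-s}\varrho(s)y:s\in\Ree\}$ for $y\neq 0$, and each such spiral meets $\Ess^1$ in exactly the point where $e^{-s}=1$; thus the orbit space of $\Ree^2\setminus\{0\}$ is parameterized by $\Ess^1$, the action is regular, and the stabilizer of a generic $y$ is trivial. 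Consequently each $\pi_y=\mathrm{Ind}_N^{G_\theta}\chi_y$ is irreducible, and $\{\pi_y\}_{y\in\Ess^1}$ exhausts $\what{G}_\theta$ off the null orbit $\{0\}$ (which carries only the characters of $G_\theta/N$); this places us in the setting of Proposition \ref{prop:plancherel}, with density and compactness supplied by \cite[7.35]{kaniutht}. Moreover, since $\pi_y$ is induced from the character $\chi_y$ of $\ker\Del$ and realized on $L^2(\Ree)$ along the cross-section $t\mapsto(0,t)$, the prescription following Proposition \ref{prop:plancherel} makes $\del_y$ multiplication by $\Del(0,t)^{-1}=e^{-2t}$, which is the claimed $\del_y\xi(t)=e^{-2t}\xi(t)$.

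The substantive step is fixing the measure. For $u\in\fC_c(G_\theta)$ let $\til u(z,s)=\int_{\Ree^2}u(x,s)e^{-ix\cdot z}\,dx$ be the partial Euclidean Fourier transform in the $\Ree^2$ variable. Using the formula for $\pi_y$ and the substitution $r=t-s$, a short computation presents $\hat u(\pi_y)\del_y^{1/2}$ as the integral operator on $L^2(\Ree)$ with kernel $(t,r)\mapsto\til u(e^{-t}\varrho(t)y,\,t-r)\,e^{-2t+r}$, so that
\[
\norm{\hat u(\pi_y)\del_y^{1/2}}_{HS}^2=\int_\Ree\int_\Ree|\til u(e^{-t}\varrho(t)y,w)|^2\,e^{-2t-2w}\,dw\,dt.
\]
Integrating against the rotation-invariant probability measure $\nu=\tfrac{d\alpha}{2\pi}$ on $\Ess^1$, I would change variables $(y,t)\mapsto z=e^{-t}\varrho(t)y$; writing $y=(\cos\alpha,\sin\alpha)$ and $z$ in polar form $(\rho,\psi)$ one has $\rho=e^{-t}$ and $\psi=\alpha+t\theta$, whose Jacobian yields $e^{-2t}\,dt\,d\alpha=dz$. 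Thus the weight $e^{-2t}$ is precisely what turns $d\nu\,dt$ into (a constant multiple of) Lebesgue measure $dz$ on $\Ree^2$. Applying the Euclidean Plancherel theorem in $z$ and then recognizing the surviving factor $e^{-2w}$ as the modular weight in $d(x,s)=e^{-2s}\,dx\,ds$ collapses everything to a positive scalar multiple of $\int_{G_\theta}|u|^2$. Hence $u\mapsto(\hat u(\pi_y)\del_y^{1/2})_y$ is, up to a harmless constant, an isometry onto $\int^\oplus_{\Ess^1}L^2(\Ree)_y\otimes^2\wbar{L^2(\Ree)}_y\,d\nu(y)$, and the uniqueness clause of Proposition \ref{prop:plancherel} identifies the Plancherel measure with the class of $\nu$. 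The disintegration $\lam\simeq\int^\oplus_{\Ess^1}\pi_y\,d\nu(y)$ is then read off from (\ref{eq:dislrr}).

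I expect the change-of-variables bookkeeping in the last paragraph to be the main obstacle: all of the nonunimodularity is carried by the competing factors $e^{-2t}$ (from $\del_y$ together with the spiral Jacobian) and $e^{-2s}$ (from the modular function), and these must cancel exactly for the \emph{invariant} measure to emerge rather than some weighted variant. The leftover constant $2\pi$ is immaterial, since Proposition \ref{prop:plancherel} determines the triple only up to measure equivalence, which is also all that the quasi-equivalence $\lam\simeq\int^\oplus\pi_y\,d\nu(y)$ and the subsequent use of (\ref{eq:ev}) require.
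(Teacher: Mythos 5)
Your proposal is correct and takes essentially the same route as the paper: the paper's proof likewise "simply verifies" the stated choices satisfy Proposition \ref{prop:plancherel}, by computing the kernel of $\hat{u}(\pi_y)\del_y^{1/2}$ via the partial Fourier transform in the $\Ree^2$-variable, integrating the Hilbert--Schmidt norm against $\nu$, performing the rotation-invariance/polar change of variables $(y,t)\mapsto e^{-t}\varrho(t)y$ to recover Lebesgue measure on $\Ree^2$, and invoking the Euclidean Plancherel theorem. Your opening Mackey-machine paragraph and the explicit handling of the stray normalization constant are details the paper leaves implicit (it sets up the induced representations and cites \cite{kaniutht} before the proposition), but the substantive computation coincides.
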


\begin{proof}
We will simply verify that the choices above satisfy Proposition \ref{prop:plancherel}.
Let for $y$ in $\Ess^1$ and $u$ in $\fC_c(G_\theta)$
and $\xi$ in $L^2(\Ree)$
\begin{align*}
\hat{u}(\pi_y)\del_y^{1/2}\xi(t)&=\int_G u(g)\pi_y(g)(\del_y^{1/2}\xi)(t)\,dg \\
&=\int_\Ree\int_{\Ree^2}u(x,s) e^{-ie^{-t}x\cdot(\varrho(t)y)}e^{s-t}\xi(t-s)\,dx\,ds \\
&=\int_\Ree 2\pi\hat{u}^1(e^{-t}\varrho(t)y,t-s)e^{-s}\xi(s)\,ds
\end{align*}
where $\hat{u}^1$ is the partial Fourier transform in the $\Ree^2$-variable.  We note the well-known fact
that the Hilbert-Schmidt norm $\norm{\hat{u}(\pi_y)}_2$ is given in terms of
the kernel function as $\left(\int_\Ree\int_\Ree|2\pi\hat{u}^1(e^{-t}\varrho(t)y,s+t)|^2\,e^{-2s}ds\,dt\right)^{1/2}$.  We
then see that
\begin{align*}
\int_{\Ess^1} \norm{\hat{u}(\pi_y)\del_y}_2^2&\,d\nu(y)
=\int_{\Ess^1} \int_\Ree\int_\Ree|2\pi\hat{u}^1(e^{-t}\varrho(t)y,t-s)|^2\,e^{-2s}ds\,dt\,d\nu(y) \\
&=\int_{\Ess^1} \int_\Ree\int_\Ree|2\pi\hat{u}^1(e^{-t}y,s)|^2\,e^{2(t-s)}ds\,dt\,d\nu(y) \\
&=\int_{\Ree^2}\int_\Ree|2\pi\hat{u}^1(z,s)|^2\,e^{-2s}ds\,dz
=\int_\Ree\int_{\Ree^2}|u(x,s)|^2\, dx\, e^{-2s}ds
=\norm{u}_2^2.
\end{align*}
where we have used the invariance of the chosen measures on $\Ess^1$ and $\Ree$,
then an obvious change of variables in $\Ree^2$, and finally the Plancherel formula in $\Ree^2$.
\end{proof}

We note that $G_\theta$ admits the Lie algebra
\[
\fg_\theta=\langle T,X_1,X_2 : [T,X_1]=X_1-\theta X_2,
[T,X_2]=\theta X_1+X_2,[X_1,X_2]=0\rangle
\]
where $\exp(hX_j)=(he_j,0)$ for $j=1,2$ ($(e_1,e_2)$
is the standard basis for $\Ree^2$) and $\exp(hT)=(0,h)$.

\begin{theorem}\label{theo:grelaud}
Let $v_1$ in $L^1(\Ree^2)$ be so that $\hat{v}_1\in A_c(\Ree^2)$,
let $v_2\in \fC_c(\Ree)$, and then let $v(x,s)=v_1(x)v_2(s)$.
Let $X\in\spn\{X_1,X_2\}$.  Then there is an $S=S_{X,v}$ in $VN(G_\theta)$
for which
\[
S(u)=\int_{G_\theta} \partial_{(X,0)}u(g,g^{-1})v(g)\,dg
\]
for $u$ in $\fD(G_\theta\times G_\theta)$.
\end{theorem}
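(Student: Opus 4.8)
The plan is to verify the hypotheses of Lemma~\ref{lem:derfac} on the product group $G_\theta\times G_\theta$, applied to the operator field with fibres $d(\pi_y\times\pi_{y'})(X,0)=d\pi_y(X)\otimes I$, exactly as was done for the three unimodular groups above. The target operator is $T=E_v$, whose fibres $E_v(y,y')$ are given by the analogue of~(\ref{eq:ev}), and the whole argument reduces to establishing a uniform-in-$(y,y')$ bound
\[
\sup_{(y,y')\in\Ess^1\times\Ess^1}\norm{E_v(y,y')\,(d\pi_y(X)\otimes I)}<\infty,
\]
at which point Lemma~\ref{lem:derfac} produces $S=S_{X,v}$ and Lemma~\ref{lem:basic} finishes the failure of local synthesis. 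Compared to the unimodular cases, the new feature is that $G_\theta$ is non-unimodular, so the modular operators $\del_y$ (here multiplication by $e^{-2t}$) genuinely enter, and one must check the density/invariance condition $\del_y^{-1}\fF_y\subseteq\fH^X_{\pi_y}$ demanded by Lemma~\ref{lem:derfac}. Since $\del_y$ is a fixed multiplication operator independent of $y$ and $d\pi_y(X)$ will again turn out to be a multiplication operator, taking $\fF_y$ to be a suitable space of rapidly decaying smooth functions (e.g.\ Schwartz functions) handles this point cleanly.

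First I would compute $d\pi_y(X)$ for $X\in\spn\{X_1,X_2\}$. Writing $X$ via $\exp(hX)=(hx,0)$ for the appropriate $x\in\Ree^2$, differentiating $\pi_y(hx,0)\xi(t)=e^{-ihe^{-t}x\cdot(\varrho(t)y)}\xi(t)$ at $h=0$ shows that $d\pi_y(X)$ is multiplication by $t\mapsto -ie^{-t}x\cdot(\varrho(t)y)$. The crucial structural fact is the factor $e^{-t}$: this multiplier is \emph{not} bounded on $L^2(\Ree)$, so unlike the Euclidean motion and Heisenberg cases one cannot simply bound $\norm{d\pi_y(X)}$ and multiply. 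Instead the unbounded growth of $d\pi_y(X)$ as $t\to-\infty$ must be absorbed by decay of the kernel of $E_v(y,y')$, and it is precisely here that the hypotheses on $v$ enter: the assumption $\hat v_1\in A_c(\Ree^2)$ gives integrability and compact-support control of the $\Ree^2$-Fourier data, while $v_2\in\fC_c(\Ree)$ localizes the $s$-variable. I expect $E_v(y,y')$ to be, after a conjugation by an explicit unitary (a shift $U$ as in Theorems~\ref{theo:euclmot} and~\ref{theo:heisenberg}), an integral operator whose kernel involves $\hat v_1$ evaluated at an argument scaled by $e^{-t}$ together with $v_2$; the $e^{-t}$ scaling in the kernel's argument is what pairs off against the $e^{-t}$ growth of the multiplier.

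The main obstacle, and the heart of the proof, is the uniform estimate on $\norm{E_v(y,y')(d\pi_y(X)\otimes I)}$. The strategy I would follow mirrors the two preceding theorems: conjugate $E_v(y,y')$ by an appropriate unitary to turn it into a manageable integral operator, then compose with the multiplier $d\pi_y(X)\otimes I$ and bound the resulting kernel by the Schur test (i.e.\ a Cauchy--Schwarz argument followed by Tonelli, exactly as in the displayed chains in Theorems~\ref{theo:euclmot} and~\ref{theo:heisenberg}). Concretely, one controls
\[
\int_\Ress \bigl|\text{kernel}(t,t')\bigr|\,e^{-t}\,\bigl|x\cdot\varrho(t)y\bigr|\,dt'
\]
uniformly in $t$ and in $(y,y')$; the rotational parameter $y\in\Ess^1$ enters only through $\varrho(t)y$, which has modulus $1$, so $|x\cdot\varrho(t)y|\leq|x|$ is harmless and the $y$-dependence drops out of the final bound, giving the required supremum over the compact parameter space $\Ess^1\times\Ess^1$. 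The delicate part is arranging the change of variables so that the factor $e^{-t}$ from the derivative is exactly cancelled by an $e^{-t}$ appearing in the kernel's argument through the $e^{-t}\varrho(t)y$ scaling in $\pi_y$, leaving a kernel that is integrable uniformly in the remaining variable; granting that cancellation, the Schur/Cauchy--Schwarz bound closes, and the appeal to Lemma~\ref{lem:derfac} completes the argument.
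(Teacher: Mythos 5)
Your scaffolding matches the paper's: compute the fibres $E_v(y,y')$ from (\ref{eq:ev}), conjugate by $U\xi(t,t')=\xi(t,t'+t)$, identify $d\pi_y(X)$ as multiplication by $t\mapsto -ie^{-t}x\cdot(\varrho(t)y)$, and reduce everything to a uniform Schur-test bound before invoking Lemma \ref{lem:derfac}. But the step you defer with ``granting that cancellation'' is the entire content of the theorem, and the cancellation as you describe it does not occur. The argument of $\hat{v}_1$ in the kernel is the \emph{difference} $e^{-t}\varrho(t)y-e^{-t'+s}\varrho(t'+s)y'$; on the region where $e^{-t}=e^{-t'+s}$ and the two rotated unit vectors nearly align (which is compatible with $v_2(t-s)\not=0$: take $t\to-\infty$, $t'\approx 2t$, $s\approx t'-t$), this argument is near $0$ while the multiplier contributes a factor of order $e^{-t}|x|\to\infty$. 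So no pointwise pairing of the $e^{-t}$ in the multiplier against the scaling inside $\hat{v}_1$ can close the estimate. What the paper actually does is: dominate $2\pi|\hat{v}_1|\leq\vphi(|\cdot|)$ with $\vphi\in\fC_c([0,\infty))$ non-increasing; bound the kernel argument from below by the reverse triangle inequality $|e^{-t}\varrho(t)y-e^{-t'+s}\varrho(t'+s)y'|\geq|e^{-t}-e^{-t'+s}|$ (this is also what makes the bound uniform in $(y,y')$, a point your sketch glosses over); and then split $e^{-t}\leq|e^{-t}-e^{-t'+s}|+e^{-t'+s}$. The first piece is controlled pointwise by $\norm{\vphi\iota}_\infty$ --- that is your cancellation, valid only off the bad region. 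The second piece, which is exactly the bad region, is controlled not pointwise but in integral: the substitution $\sig=e^{-t'+s}$ turns $\int_\Ree\vphi(|e^{-t}-e^{-t'+s}|)e^{-t'+s}\,ds$ into at most $\norm{\vphi}_1$. Both pieces also carry the modular weights $e^{s-t}|v_2(t-s)|$ coming from the non-unimodular Haar measure, i.e.\ norms of $\del^{-1/2}v_2$, which your outline ignores.

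A second, smaller error: Schwartz functions do not satisfy the hypothesis $\del_y^{-1}\fF_y\subseteq\fH^X_{\pi_y}$ of Lemma \ref{lem:derfac}. Here $\del_y^{-1}$ is multiplication by $e^{2t}$, and $e^{2t}\xi(t)$ need not even lie in $L^2(\Ree)$ for Schwartz $\xi$ (take $\xi(t)=e^{-\sqrt{1+t^2}}$). Rapid polynomial-type decay is the wrong requirement against an exponential weight; the paper takes $\fF_y=\fC_c(\Ree)$, which $\del_y^{-1}$ manifestly preserves, and that choice is what handles the non-unimodularity cleanly.
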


\begin{proof}
Our assumptions on $v_1$ and $v_2$ ensure that $v$ is integrable.
We use (\ref{eq:ev}) to compute for $\xi$ in $L^2(\Ree^2)\cong L^2(\Ree)\otimes^2 L^2(\Ree)$ that
\begin{align*}
E_v(y,y')&\xi(t,t') \\
&=\int_\Ree \int_{\Ree^2}v_1(x)v_2(s)e^{-i(\tau(-t)x)\cdot y+(-\tau(-t'-s)x)\cdot y' }\xi(t-s,t'+s)
\,dx\,e^{-2s}ds \\
&=\int_\Ree 2\pi\hat{v}_1(e^{-t}\varrho(t)y-e^{-t'-s}\varrho(t'+s)y')v_2(s)e^{-2s}\xi(t-s,t'+s)\,ds.
\end{align*}
Letting, now, $U\xi(t,t')=\xi(t,t'+t)$ we compute
\begin{align}
U^*E_v&(y,y')U\xi(t,t')=E_v(y,y')U\xi(t,t'-t) \notag \\
&=2\pi \int_\Ree \hat{v}_1(e^{-t}\varrho(t)y-e^{-t'+t-s}\varrho(t'-t+s)y')v_2(s)e^{-2s}\xi(t-s,t')\,ds \notag \\
&=2\pi\int_\Ree \hat{v}_1(e^{-t}\varrho(t)y-e^{-t'+s}\varrho(t'+s)y')v_2(t-s)e^{2(s-t)}\xi(s,t')\,ds
\label{eq:Guevu}
\end{align}

Let us now compute $d\pi_y(X)$.  Let $(x,0)=\exp X$, and for $\xi\in\fC_c(\Ree)$ we have
\begin{equation}\label{eq:Gdpi}
d\pi_y(X)\xi(t)=\lim_{h\to 0 }\frac{1}{h}[e^{-ie^{-t}(hx)\cdot(\varrho(t)y)}-1]\xi(t)
=-ie^{-t}x\cdot(\varrho(t)y)\xi(t)
\end{equation}
where convergence is uniform on compact sets.  Hence
$d\pi_y(X)$ admits $\fF_y=\fC_c(\Ree)$ in its domain, and by (\ref{eq:dpi}),
and is given by pointwise multiplication by $t\mapsto -ie^{-t}x\cdot(\varrho(t)y)$.
Notice that $\del_y^{-1}\fF_y\subset\fF_y$.  Hence we may appeal to Lemma \ref{lem:derfac}
and it suffices to see that the field of operators
\begin{equation}\label{eq:Gopf}
(E_v(y,y')(d\pi_y(X)\otimes I))_{y,y'\in\Ess^1}\text{ is bounded}
\end{equation}
to gain our conclusion.  It is clear that $U^*(d\pi_y(X)\otimes I)U=d\pi_y(X)\otimes I$, and
it suffices to work with the field $(U^*E_v(y,y')U(d\pi_y(X)\otimes I))_{y,y'\in\Ess^1}$.
We combine (\ref{eq:Guevu}) and (\ref{eq:Gdpi}) to see that for $\xi$ in $L^2(\Ree^2)$ we have
\begin{align*}
U^*&E_v(y,y')U(d\pi_y(X)\otimes I)\xi(t,t') \\
&=2\pi\int_\Ree \hat{v}_1(e^{-t}\varrho(t)y-e^{-t'+s}\varrho(t'+s)y')v_2(t-s)
e^{2(s-t)}(d\pi_y(X)\otimes I)\xi(s,t')\,ds \\
&=-2\pi i\int_\Ree \hat{v}_1(e^{-t}\varrho(t)y-e^{-t'+s}\varrho(t'+s)y')v_2(t-s)
e^{s-2t}(x\cdot(\varrho(s)y))\xi(s,t')\,ds.
\end{align*}
Our assumptions on $v_1$ allow us to find a non-increasing $\vphi$ in $\fC_c([0,\infty))$
for which
\[
2\pi|\hat{v}_1(y)|\leq\varphi(|y|).
\]
Let us observe that $|e^{-t}\varrho(t)y-e^{-t'+s}\varrho(t'+s)y'|\geq |e^{-t}-e^{-t'+s}|$.  Now with
an application of Cauchy-Schwarz inequality we obtain
\begin{align*}
&\norm{U^*E_v(y,y')U(d\pi_y(X)\otimes I)\xi}_2^2 \\
&\quad\leq |x|^2\int_{\Ree^2}  \left[\int_\Ree\varphi(|e^{-t}-e^{-t'+s}|)|v_2(t-s)|
e^{s-2t}|\xi(s,t')|\,ds\right]^2\,d(t,t') \\
&\quad\leq |x|^2\int_{\Ree^2}\left[\int_\Ree \varphi(|e^{-t}-e^{-t'+s}|)e^{-t}|v_2(t-s)|e^{s-t}\,ds\right] \\
&\quad\qquad\quad
\times\left[\int_\Ree \varphi(|e^{-t}-e^{-t'+s}|)e^{-t}|v_2(t-s)|e^{s-t}|\xi(s,t')|^2\,ds\right]\,d(t,t').
\end{align*}
Since $e^{-t}\leq|e^{-t}-e^{-t'+s}|+e^{-t'+s}$ we have
\begin{align*}
\int_\Ree &\varphi(|e^{-t}-e^{-t'+s}|)e^{-t}|v_2(t-s)|e^{s-t}\,ds  \\
&\leq \int_\Ree \varphi(|e^{-t}-e^{-t'+s}|)|e^{-t}-e^{-t'+s}||v_2(t-s)|e^{s-t}\,ds  \\
&\quad\qquad\quad +
\int_\Ree \varphi(|e^{-t}-e^{-t'+s}|)e^{-t'+s}|v_2(t-s)|e^{s-t}\,ds  \\
&\leq\norm{\vphi\iota}_\infty\|\del^{-1/2} v_2\|_1+\norm{\vphi}_1\|\del^{-1/2} v_2\|_\infty
\end{align*}
where $\iota(r)=r$ for $r\geq 0$ and $\del v_2(s)=e^{2s}v_2(s)$.
It then follows that
\begin{align*}
&\norm{U^*E_v(y,y')U(d\pi_y(X)\otimes I)\xi}_2^2 \\
&\quad\leq |x|^2\left(\norm{\vphi\iota}_\infty\norm{\del v_2}_1+\norm{\vphi}_1\norm{\del v_2}_\infty\right) \\
&\quad\qquad\quad
\times\int_\Ree\int_\Ree\left[\int_\Ree \varphi(|e^{-t}-e^{-t'+s}|)e^{-t}|v_2(t-s)|e^{s-t}\,dt
\right]|\xi(s,t')|^2\,ds\,dt' \\
&\quad\leq|x|^2\left(\norm{\vphi\iota}_\infty\|\del^{-1/2} v_2\|_1+\norm{\vphi}_1\|\del^{-1/2} v_2\|_\infty\right)
\norm{\vphi}_1\|\del^{-1/2} v_2\|_\infty\norm{\xi}_2^2.
\end{align*}
Hence we have verified (\ref{eq:Gopf}).
\end{proof}

We now consider the real affine motion group, also known as the ``$ax+b$ group",
$F=\Ree\rtimes\Ree$ with product and inverse given by
\[
(b,a)(b',a')=(b+e^ab',a+a')\quad\text{and}\quad
(b,a)^{-1}=(-e^{-a}b,-a).
\]
The group has left Haar integral and modular function given by
\[
\int_Fu(b,a)d(b,a)=\int_\Ree\int_\Ree u(b,a)\,e^{-a}da\,db\quad
\text{and}\quad\Del(b,a)=e^a.
\]
It is extremely well known, see, for example \cite{khalil} or \cite[7.6-2]{folland}, that the only
inequivalent infinite-dimensional irreducible representations are given by $\pi_{\pm}$, which are gained
by inducing characters of positive/negative index
from the normal subgroup $B=\{(b,0);B\in\Ree\}\cong\Ree$.
Explicitly, for $\xi$ in $L^2(\Ree)$ we have
\[
\pi_{\pm}(b,a)\xi(t)=e^{\mp ie^{-t}b}\xi(t-a).
\]
Furthermore, we get a quasi-equivalence
\begin{equation}\label{eq:affmotlrr}
\lam\simeq \pi_+\oplus \pi_-.
\end{equation}
We recall that the Lie algebra for $F$ is given by
\[
\ff=\langle X,Y : [X,Y]=Y\rangle
\]
where $\exp(hX)=(0,h)$ and $\exp(hY)=(h,0)$.

\begin{theorem}\label{theo:affmot}
Let $v_1$ in $L^1(\Ree)$ be so $\what{v}_1\in A_c(\Ree)$,  let
$v_2\in\fC_c(\Ree)$, and set $v(b,a)=v_1(b)v_2(a)$.
Then there is  $S=S_{Y,v}$ in $VN(F)$ such that
\[
S(u)=\int_F \partial_{(Y,0)}u(g,g^{-1})v(g)\,dg
\]
for  $u$ in $\fD(F\times F)$.
\end{theorem}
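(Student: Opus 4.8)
The plan is to produce $S$ exactly as in the proof of Theorem~\ref{theo:grelaud}, realizing it through Lemma~\ref{lem:derfac} as the operator field obtained from $E_v$ by post-composition with $d\pi_\bullet(Y)\otimes I$, and then checking that this field is bounded. By the quasi-equivalence (\ref{eq:affmotlrr}) the regular representation of $F$ decomposes as the \emph{finite} sum $\pi_+\oplus\pi_-$, so $\lam\times\lam$ decomposes as $\bigoplus_{\epsilon,\epsilon'\in\{+,-\}}\pi_\epsilon\times\pi_{\epsilon'}$; hence there are only four fibres to control and any fibrewise bound is automatically uniform. The operator $S=S_{Y,v}$ will be the element of $VN(F)$ whose fibre over $(\epsilon,\epsilon')$ is $E_v(\epsilon,\epsilon')(d\pi_\epsilon(Y)\otimes I)$, and Lemma~\ref{lem:derfac} will then yield $S(u)=\int_F\partial_{(Y,0)}u(g,g^{-1})v(g)\,dg$ for $u\in\fD(F\times F)$, once boundedness is in hand.

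First I would assemble the two pointwise ingredients. Since $\exp(hY)=(h,0)$ and $\pi_\pm(h,0)\xi(t)=e^{\mp ie^{-t}h}\xi(t)$, formula (\ref{eq:dpi}) shows that $d\pi_\pm(Y)$ is multiplication by $\mp ie^{-t}$, which is defined on $\fC_c(\Ree)$; moreover $\del_\pm$ is multiplication by an exponential, so with $\fF_\pm=\fC_c(\Ree)$ both $d\pi_\pm(Y)$ and $\del_\pm^{-1}$ preserve $\fC_c(\Ree)$ and the hypotheses of Lemma~\ref{lem:derfac} hold. Next I would compute $E_v(\epsilon,\epsilon')$ from (\ref{eq:ev}): integrating out the $b$-variable turns $v_1$ into its one-dimensional Fourier transform $\what{v_1}$ evaluated at $\epsilon e^{-t}-\epsilon'e^{-t'-a}$. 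Conjugating by the unitary $U\zeta(t,t')=\zeta(t,t'+t)$ (which commutes with $d\pi_\epsilon(Y)\otimes I$, exactly as in the Gr\'{e}laud case) and substituting $s=t-a$ yields, up to a positive constant $c$, the kernel operator
\[
U^*E_v(\epsilon,\epsilon')U\zeta(t,t')=c\int_\Ree v_2(t-s)\,\what{v_1}\!\left(\epsilon e^{-t}-\epsilon'e^{s-t'}\right)e^{s-t}\zeta(s,t')\,ds
\]
parallel to (\ref{eq:Guevu}); post-composing with $d\pi_\epsilon(Y)\otimes I$ replaces the weight $e^{s-t}$ by $e^{-t}$ and inserts the bounded scalar $-i\epsilon$.

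The crux is the uniform $L^2$-boundedness of this last kernel. The hypothesis $\what{v_1}\in A_c(\Ree)$ lets me dominate $c|\what{v_1}|$ by a non-increasing $\vphi\in\fC_c([0,\infty))$, and a one-line sign analysis gives $|\epsilon e^{-t}-\epsilon'e^{s-t'}|\geq|e^{-t}-e^{-t'+s}|$ (the trivial one-dimensional analogue of the rotation-invariance bound used for $G_\theta$), so $\vphi$ may be evaluated at $|e^{-t}-e^{-t'+s}|$. I would then run the Schur test by Cauchy--Schwarz as in Theorem~\ref{theo:grelaud}, the point being that the two single-variable integrals
\[
\int_\Ree\vphi(|e^{-t}-e^{-t'+s}|)\,|v_2(t-s)|\,e^{-t}\,ds\quad\text{and}\quad
\int_\Ree\vphi(|e^{-t}-e^{-t'+s}|)\,|v_2(t-s)|\,e^{-t}\,dt
\]
are both bounded uniformly in the remaining variables. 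The first I would split via $e^{-t}\leq|e^{-t}-e^{-t'+s}|+e^{-t'+s}$, bounding the first piece by $\norm{\vphi\iota}_\infty\norm{v_2}_1$ (with $\iota(r)=r$, as in the Gr\'{e}laud proof) and, after the substitution $u=e^{-t'+s}$, the second by a multiple of $\norm{v_2}_\infty\norm{\vphi}_1$ using compact support of $\vphi$. For the second integral the substitution $u=e^{-t}$ is decisive: its Jacobian absorbs the weight $e^{-t}$ exactly, leaving $\int_0^\infty\vphi(|u-e^{-t'+s}|)\,|v_2(-\ln u-s)|\,du$, again bounded by a multiple of $\norm{v_2}_\infty\norm{\vphi}_1$. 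This change of variables, which linearises the non-unimodular weight coming from $\Del(b,a)=e^a$, is the main technical obstacle; once it is in place, Tonelli's theorem finishes the uniform bound and Lemma~\ref{lem:derfac} completes the proof.
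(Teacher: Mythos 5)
Your proposal is correct and follows essentially the same route as the paper's proof: the decomposition (\ref{eq:affmotlrr}) into the four fibres $\pi_\sig\times\pi_{\sig'}$, conjugation by $U\xi(t,t')=\xi(t,t'+t)$, the computation $d\pi_\pm(Y)\xi(t)=\mp ie^{-t}\xi(t)$ on $\fF_\pm=\fC_c(\Ree)$, and a Cauchy--Schwarz/Schur estimate whose row integral is handled by the splitting $e^{-t}\leq|\sig e^{-t}-\sig' e^{a-t'}|+e^{a-t'}$ and whose column integral is handled by the substitution $u=e^{-t}$, exactly as in the paper. The only cosmetic difference is that you dominate $|\what{v}_1|$ by a non-increasing $\vphi\in\fC_c([0,\infty))$ as in the Gr\'{e}laud case, where the paper works with $\norm{\hat{v}_1\alp}_\infty$ and $\norm{\hat{v}_1}_1$ directly; this changes nothing of substance.
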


\begin{proof}
The details of this proof are similar to those in the proof of Theorem \ref{theo:grelaud}. 
Indeed, we obtain
for $\sig,\sig'$ in $\{\pm\}$ and $\xi$ in $L^2(\Ree^2)$ that
\[
E_v(\sig,\sig')\xi(t,t')
=\sqrt{2\pi}\int_\Ree \hat{v}_1(\sig e^{-t}-\sig' e^{-a-t'})v_2(a)e^{-a}\xi(t-a,t'+a)\,da
\]
and if $U\xi(t,t')=\xi(t,t'+t)$ we have
\[
U^*E(\sig,\sig')U\xi(t,t')=
\sqrt{2\pi}\int_\Ree \hat{v}_1(\sig e^{-t}-\sig' e^{a-t'})v_2(t-a)e^{a-t}\xi(a,t')\,da.
\]
We compute for $\xi$ in $\fF_\pm=\fC_c(\Ree)$ the derivative
\[
d\pi_{\pm}(Y)\xi(t)=\lim_{g\to 0}\frac{1}{h}[e^{\mp ie^{-t}h}-I]\xi(t)=\mp i e^{-t}\xi(t).
\]
The operators from the Plancherel formula (Proposition \ref{prop:plancherel}) are
given by $\del_\pm\xi(t)=e^{-t}\xi(t)$, and we see that $\del_\pm\fF_\pm\subseteq\fF_\pm$.
Hence we may appeal to Lemma \ref{lem:derfac}, and it suffices to see that each of the operators
\begin{equation}\label{eq:evpm}
E_v(\sig,\sig')(d\pi_\sig\otimes I)\text{ is bounded for each }\sig,\sig'\text{ in }\{\pm\}.
\end{equation}
Since $U^*(d\pi_\sig\otimes I)U=d\pi_\sig\otimes I$ it suffices compute the norms of the operators
$U^*E_v(\sig,\sig')U(d\pi_\sig\otimes I)$.  We first observe that
\[
U^*E_v(\sig,\sig')U(d\pi_\sig\otimes I)\xi(t,t')
=\sqrt{2\pi}i\sig\int_\Ree \hat{v}_1(\sig e^{-t}-\sig' e^{a-t'})v_2(t-a) e^{-t}\xi(a,t')\,da.
\]
We then note that $e^{-t}\leq |\sig e^{-t}-\sig' e^{a-t'}|+e^{a-t'}$ and observe that
\begin{align*}
\int_\Ree |\hat{v}_1(\sig e^{-t}-&\sig' e^{a-t'})v_2(t-a)| e^{-t}\,da \\
&\leq \int_\Ree |\hat{v}_1(\sig e^{-t}-\sig' e^{a-t'})||\sig e^{-t}-\sig' e^{a-t'}||v_2(t-a)| \,da \\
&\qquad\quad+\int_\Ree |\hat{v}_1(\sig e^{-t}-\sig' e^{a-t'})|e^{a-t'}|v_2(t-a)| \,da \\
&\leq \norm{\hat{v}_1\alp}_\infty\norm{v_2}_1+\norm{\hat{v}_1}_1\norm{v_2}_\infty.
\end{align*}
where $\alp(t)=|t|$ for $t$ in $\Ree$.
Hence by our usual technique we see that
\begin{align*}
&\norm{U^*E_v(\sig,\sig')U(d\pi_\sig\otimes I)\xi}_2^2 \\
&\quad\leq 2\pi
\left( \norm{\hat{v}_1\alp}_\infty\norm{v_2}_1+\norm{\hat{v}_1}_1\norm{v_2}_\infty\right) \\
&\quad\qquad\quad \times \int_\Ree\int_\Ree\int_\Ree
|\hat{v}_1(\sig e^{-t}-\sig' e^{a-t'})|e^{-t}|v_2(t-a)| |\xi(a,t')|^2\,dt\,da\,dt' \\
&\quad\leq 2\pi\left( \norm{\hat{v}_1\alp}_\infty\norm{v_2}_1+\norm{\hat{v}_1}_1\norm{v_2}_\infty\right)
\norm{\hat{v}_1}_1\norm{v_2}_\infty\norm{\xi}_2^2.
\end{align*}
Hence (\ref{eq:evpm}) is verified.
\end{proof}


\subsection{Remarks}  We note that the failure of weak amenability of $A(G)$, for
$G$ either $F$ or $\Hee^r$, is shown in \cite{choig}; but this does not automatically imply failure of 
local synthesis.  

The one aspect in common in  the strategies employed for each of the five (classes of) 
groups $G$ above is that the Lie derivative is always taken from a direction which is trivial
in any abelian quotient.  We suspect that to do otherwise would entail that for an abelian quotient,
$G/N$, we would find that $\check{\Del}_{G/N}$ would be a set of non-synthesis, which
would contradict Theorem \ref{theo:wassad} as $A(G/N)\cong L^1(\widehat{G/N})$
is amenable.

Our choice of Lie derivatives $X$, in forming operators $S_{X,v}$, 
shares a property in common with the choices made in \cite{johnson}
for $\mathrm{SO}(3)$, and \cite{choig,choig1} for $F$, $\Hee^r$ and the Heisenberg group $\Hee$.
Of course, there is an enormous gulf between exploiting these for a failure of spectral synthesis calculation
and showing that these derivatives may be used to build non-trivial elements of $H^1(A(G),VN(G))$.
It is plausible that for each of our four semi-direct product groups $G=N\rtimes A$, above, with the associated
Lie algebra $\fn\rtimes\fa$, that the space of bounded derivations from $A(G)$ to $VN(G)$ is isomorphic to 
the Lie algebra $\fn$.

\section{Weak amenability of Fourier algebras}\label{sec:final}

The following is well-known.  We include a proof for convenience of non-specialists.

\begin{proposition}\label{prop:basiclie}
{\bf (i)}  Let $\fg$ be a non-abelian real Lie algebra.  Then $\fg$ contains either
$\fsu(2)$ or a non-abelian solvable algebra $\fm$.

{\bf (ii)}  Let $\fm$ be a non-abelian solvable real Lie algebra.  Then $\fm$ contains one of
the following  Lie algebras:
\begin{align*}
\ff&=\langle X,Y : [X,Y]=Y\rangle\text{ (affine motion)} \\
\fe&=\langle T,X_1,X_2 : [T,X_1]=X_2,[T,X_2]=-X_1,[X_1,X_2]=0\rangle
\text{ (Euclidean motion)} \\
\fg_\theta&=\langle T,X_1,X_2 : [T,X_1]=X_1-\theta X_2,[T,X_2]=\theta X_1+X_2,[X_1,X_2]=0\rangle, \;
\theta>0, \\
&\phantom{mm}\text{ (Gr\'{e}laud), or} \\
\fh&=\langle X,Y,Z:[X,Y]=Z,[X,Z]=0=[Y,Z]\rangle\text{ (Heisenberg).}
\end{align*}
\end{proposition}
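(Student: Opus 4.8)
The plan is to prove the two parts separately, in both cases reducing the problem to the Jordan/eigenvalue type of the adjoint action of a single well-chosen element.

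For part (i) I would first dispose of the solvable case trivially: if $\fg$ is solvable it is itself the required non-abelian solvable subalgebra. If $\fg$ is not solvable, I would invoke the Levi decomposition $\fg=\fr\rtimes\fs$ with $\fs\neq 0$ semisimple, and the standard fact that every nonzero real semisimple Lie algebra contains a three-dimensional simple subalgebra, necessarily isomorphic to $\fsu(2)$ or to $\mathfrak{sl}_2(\Ree)$ (the only real forms in dimension three). In the first case we are finished; in the second, the trace-zero upper-triangular matrices inside $\mathfrak{sl}_2(\Ree)$ form a copy of $\ff$, which is non-abelian solvable. This part is routine.

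The content lies in part (ii). The organizing idea is that the four target algebras are precisely the models for the action of one element $X$ on an abelian subspace, sorted by the eigenvalues of $\ad X$: a nonzero real eigenvalue gives $\ff$, a purely imaginary pair gives $\fe$, a genuinely complex eigenvalue gives $\fg_\theta$, and a nontrivial nilpotent action gives $\fh$. I would first record two standard facts: that $\fn:=[\fm,\fm]$ is a nilpotent ideal, and that every non-abelian nilpotent Lie algebra contains a copy of $\fh$ --- the latter by descending the lower central series to its last nonzero (hence central) term, choosing $X,Y$ with $Z:=[X,Y]\neq 0$ in that term, and checking $\langle X,Y,Z\rangle\cong\fh$. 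Consequently, if $\fm$ contains any non-abelian nilpotent subalgebra we obtain $\fh$ and are done; this covers the cases where $\fm$ is itself nilpotent or where $\fn$ is non-abelian.

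In the remaining case $\fn$ is abelian and $\fm$ is not nilpotent. Here the crucial observation is that $\ad X$ maps $\fm$ into $\fn$ and $\fn$ into $\fn$, so nilpotence of $\ad X|_\fn$ would force nilpotence of $\ad X$ on all of $\fm$; since $\fm$ is non-nilpotent, Engel's theorem yields an $X$ with $\ad X|_\fn$ non-nilpotent, hence with a nonzero eigenvalue $\lambda=\alpha+i\beta$ on the complexification of $\fn$. Because $\fn$ is abelian, any $\ad X$-invariant subspace of $\fn$ is automatically an abelian subalgebra, and adjoining $X$ closes it into a subalgebra; one then reads off the model from $\lambda$. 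A real $\lambda$ gives a real eigenvector $Y$ and, after rescaling $X$, the relation $[X,Y]=Y$, i.e. $\ff$; a nonzero $\beta$ gives a real invariant plane $\langle X_1,X_2\rangle\subseteq\fn$ yielding $\fe$ when $\alpha=0$ (rescaling $X$ by $1/\beta$) and $\fg_\theta$ with $\theta=|\beta/\alpha|$ when $\alpha\neq 0$ (rescaling by $1/\alpha$). I expect the only real obstacle to be the bookkeeping here: checking that the rescalings and sign/orientation choices land exactly on the presentations as written --- in particular arranging $\theta>0$ --- and confirming linear independence of the chosen vectors so the subalgebras have the stated dimensions.
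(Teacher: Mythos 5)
Your argument is correct, and it differs from the paper's proof in ways worth recording. In part (i) the paper splits on whether the solvable radical $\fr$ is abelian: if not, it takes $\fm=\fr$; if so, it takes a Levi factor $\fs$ with Iwasawa decomposition $\fs=\fk+\fa+\fn$, obtaining $\fsu(2)\subseteq\fk$ when $\fk$ is non-abelian, and the non-abelian solvable algebra $\fa+\fn$ otherwise. You split instead on solvability of $\fg$ and quote the fact that a nonzero real semisimple algebra contains $\fsu(2)$ or $\mathfrak{sl}_2(\Ree)$; this is indeed standard (restricted-root $\mathfrak{sl}_2$-triples in the noncompact case, root copies of $\fsu(2)$ in the compact case), and it buys the sharper conclusion --- which the paper only notes in a bracketed remark citing Choi--Ghandehari --- that the noncompact semisimple case already yields $\ff$. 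In part (ii) both proofs run on the same engine: either some $\ad X$ has a nonzero eigenvalue, and the real Jordan type of that eigenvalue (real, purely imaginary, properly complex) delivers $\ff$, $\fe$ or $\fg_\theta$ after rescaling, or else Engel's theorem forces nilpotency and the last nonzero term of the central series yields $\fh$. The difference is the device ensuring the eigenvectors commute: the paper first replaces $\fm$ by the second-to-last term $\fm^{(d-2)}$ of its derived series, so that the derived algebra is abelian and the eigenvectors, lying in the image of $\ad S$ and hence in that derived algebra, commute automatically; you instead use that $\fn=[\fm,\fm]$ is a nilpotent ideal, harvest $\fh$ whenever $\fn$ is non-abelian (or $\fm$ is nilpotent), and in the remaining case localize Engel's theorem via the observation that nilpotence of $\ad X|_\fn$ forces nilpotence of $\ad X$, so non-nilpotence of $\fm$ produces an eigenvalue of $\ad X|_\fn$ acting on the abelian ideal $\fn$. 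Both reductions are sound; yours additionally invokes the standard fact that the derived algebra of a finite-dimensional solvable Lie algebra is nilpotent, while the paper's needs nothing beyond the derived series itself. The bookkeeping you flag at the end (rescalings, swapping $X_1,X_2$ to arrange $\theta>0$, linear independence, which follows since $X\notin\fn$ while the eigenvectors lie in $\fn$) does close exactly as you expect, just as it does in the paper.
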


\begin{proof}  Every concept and fact from Lie theory which is used in this proof
is well-known and may be found in \cite{hilgertnB}, for example.

(i) Let $\fr$ denote the solvable radical ideal of $\fg$.  If $\fr$ is non-abelian, we set $\fm=\fr$.
If $\fr$ is abelian, we use the Levi-decomposition $\fg=\fs+\fr$, where $\fs$ is semisimple.
For $\fs$ we have the Iwasawa decomposition $\fs=\fk+\fa+\fn$.
If $\fk$ is non-abelian, then $\fk$ contains $\fsu(2)$ and we are done.  Otherwise $\fm=\fa+\fn$
is non-abelian and solvable; indeed, see the construction in \cite[\S 13.3]{hilgertnB}.
[In \cite[Proposition 5.3]{choig} a further refinement shows that $\fm\supseteq \ff$, in this case.]

(ii)  We let $\fm\supsetneq\fm'\supsetneq\dots\supsetneq\fm^{(d-1)}\supsetneq\{0\}$ be the derived series,
so $\fm^{(d-2)}$ is non abelian.  For simplicity we assume $\fm=\fm^{(d-2)}$; thus we
now have derived series $\fm\supsetneq\fm'\supsetneq\fm''=\{0\}$.

Suppose, first
there is an element $S$ of $\fm$ for which $\ad S:\fm\to\fm$ admits a non-zero eigenvalue
$a+ib$, $a,b\in\Ree$.  If $b=0$, then there is an eigenvector $Y$ for $\ad S$, i.e.\
$[S,Y]=aY$.  Put $X=\frac{1}{a}S$ and we get $\ff=\spn\{X,Y\}$.  Otherwise, we appeal to the
real Jordan decomposition of $\ad S$, which allows us to find elements $Y_1,Y_2$ of $\fm$ for which
\[
[S,Y_1]=aY_1-bY_2\quad\text{and}\quad [S,Y_2]=bY_1+aY_2.
\]
If $a=0$, we let $T=\frac{1}{b}S$ and we notice immediately that $Y_1=X_1,Y_2=X_2\in \fm'$, so
$[Y_1,Y_2]=0$.  Thus we get $\fe=\spn\{T,Y_1,Y_2\}$.  Finally, if $ab\not=0$,
we let $\theta=b/a$.  Let
$T=\frac{1}{a}S$ and $X_j=Y_j$ ($j=1,2$) if $ab>0$; and let $T=\frac{1}{b}S$,
$X_1=Y_2$, $X_2=Y_1$ if $ab<0$.  It is straightforward to check that
$X_1,X_2\in\fm'$ so $[X_1,X_2]=0$.  Hence, $\fg_\theta=\spn\{T,X_1,X_2\}$, in this case.

If no $S$ in $\fm$ has that $\ad S$ admits a non-zero eigenvalue, then each $\ad S$ is nilpotent,
and hence by Engel's theorem $\fm$ is nilpotent.  We consider the central series
$\fm\supsetneq \fC(\fm)\supsetneq\dots\supsetneq \fC^n(\fm)\supsetneq\{0\}$.
Any $Z$ in $\fC^n(\fm)$ is hence central and we further consider such $Z$ for which
there are $X,Y$ in $\fm$ (one of which is in $\fC^{n-1}(\fm)$) for which $Z=[X,Y]$.
Then $\fh=\spn\{X,Y,Z\}$.
\end{proof}

\begin{theorem}\label{theo:main}
Let $G$ be a connected Lie group.  Then the following are equivalent:

{\bf (i)} $A(G)$ is weakly amenable,

{\bf (ii)} the anti-diagonal $\check{\Del}_G$ is of local synthesis for $G\times G$, and

{\bf (iii)} $G$ is abelian.
\end{theorem}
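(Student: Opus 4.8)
The plan is to prove the equivalence by the cycle (i)~$\Rightarrow$~(ii)~$\Rightarrow$~(iii)~$\Rightarrow$~(i). The implication (i)~$\Rightarrow$~(ii) is precisely Theorem~\ref{theo:wassad}. For (iii)~$\Rightarrow$~(i), if $G$ is abelian then, being connected, $G=G_e$, so $A(G)$ is isometrically isomorphic to $L^1(\what{G})$ for the Pontryagin dual $\what{G}$; since $L^1$ of any locally compact group is weakly amenable (Johnson, \cite{johnson0}), $A(G)$ is weakly amenable. This is also the Forrest--Runde statement quoted in the introduction. Consequently all of the content lies in (ii)~$\Rightarrow$~(iii), which I would establish in contrapositive form: if $G$ is a \emph{non-abelian} connected Lie group, then $\check{\Del}_G$ is not a set of local synthesis for $G\times G$.

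For the contrapositive I would combine the five model computations with the two functorial theorems. Since $\fg$ is non-abelian, Proposition~\ref{prop:basiclie} furnishes a subalgebra $\fk\subseteq\fg$ isomorphic to one of $\fsu(2),\ff,\fe,\fg_\theta,\fh$. The aim is to realize a connected group $H$ with $\operatorname{Lie}(H)=\fk$ as a \emph{closed} subgroup of $G$, so that the contrapositive of Theorem~\ref{theo:restriction} applies: failure of local synthesis for $\check{\Del}_H$ forces failure for $\check{\Del}_G$. For the concrete models $\su(2),F,E,G_\theta,\Hee^r$ the failure is already in hand, since Lemma~\ref{lem:basic} reduces it to the existence of the operator $S_{X,v}$, which is exactly what Proposition~\ref{prop:sutwo} and Theorems~\ref{theo:euclmot}, \ref{theo:heisenberg}, \ref{theo:grelaud}, \ref{theo:affmot} provide. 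The one model-level wrinkle is that the connected group carrying $\fk$ may instead be a covering group -- $\wtil{E}=\Cee\rtimes\Ree$ for $\fe$, the full Heisenberg group $\Hee$ for $\fh$, or $\mathrm{SO}(3)$ versus $\su(2)$ for $\fsu(2)$. Each such group is related to a model by a discrete central subgroup, so Theorem~\ref{theo:discnorm} transfers the failure of local synthesis between a cover and its model; hence $\check{\Del}_H$ fails local synthesis for \emph{every} connected $H$ with one of the five Lie algebras.

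The step I expect to be the main obstacle is promoting the abstract subalgebra $\fk$ to a \emph{closed} subgroup, since a subalgebra integrates in general only to an immersed, possibly non-closed, connected subgroup, whereas Theorem~\ref{theo:restriction} demands closedness. For $\fk=\fsu(2)$ this is painless: the inclusion integrates to a homomorphism from the compact simply connected group $\su(2)$ into $G$, whose image is automatically closed and is $\su(2)$ or $\mathrm{SO}(3)$. For the solvable models the cleanest route is first to reduce to the case where $G$ is simply connected, by replacing $G$ with its universal cover $\wtil{G}$ and invoking Theorem~\ref{theo:discnorm} for the discrete central kernel $D$ with $G=\wtil{G}/D$, so that local synthesis for $G\times G$ and for $\wtil{G}\times\wtil{G}$ are equivalent. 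In a simply connected $G$ the Heisenberg case is then clean, because the relevant subalgebra lies in the (closed, simply connected) nilradical and connected subgroups of a simply connected nilpotent group are closed. The genuinely delicate point is the remaining models $\ff,\fe,\fg_\theta$, where $\fk=\Ree S\oplus\fv$ with $\fv$ nilpotent and $S$ an $\ad$-eigenvector direction: one must control the closure of the one-parameter subgroup $\exp(\Ree S)$ so that no dense winding occurs. I would handle this by passing to the closure $H=\overline{\exp\fk}$ and arguing, via a minimal-dimension choice among closed connected non-abelian Lie subgroups together with the observation that each of the five model algebras admits \emph{no} proper non-abelian connected subgroup, that $H$ is forced to have Lie algebra $\fk$ itself.

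Finally I would assemble the pieces. Given non-abelian connected $G$, reduce to simply connected $G$ by Theorem~\ref{theo:discnorm}; extract, as above, a closed connected Lie subgroup $H$ whose Lie algebra is one of $\fsu(2),\ff,\fe,\fg_\theta,\fh$; deduce from the model computations (and Theorem~\ref{theo:discnorm} for any covering) that $\check{\Del}_H$ is not of local synthesis; and transport this failure upward through the contrapositive of Theorem~\ref{theo:restriction} to conclude that $\check{\Del}_G$ is not of local synthesis. This yields (ii)~$\Rightarrow$~(iii) and closes the cycle, giving the equivalence.
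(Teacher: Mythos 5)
Your skeleton is the same as the paper's: (i)$\Rightarrow$(ii) is Theorem \ref{theo:wassad}; (iii)$\Rightarrow$(i) is the classical fact $A(G)\cong L^1(\what{G})$; and (ii)$\Rightarrow$(iii) is argued contrapositively by passing to the universal cover via Theorem \ref{theo:discnorm}, extracting a model subalgebra via Proposition \ref{prop:basiclie}, quoting the five computations together with Lemma \ref{lem:basic}, moving between covers and quotients by Theorem \ref{theo:discnorm}, and restricting from $G$ by Theorem \ref{theo:restriction}. The divergence --- and a genuine gap --- occurs at exactly the step you flag as delicate: producing a \emph{closed} connected subgroup of the simply connected group $G$ whose Lie algebra is one of $\ff,\fe,\fg_\theta$.

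Your assertion that $H=\wbar{\exp\fk}$ ``is forced to have Lie algebra $\fk$ itself'' is false in general. Take $G=\Cee^2\rtimes(\su(2)\times\su(2))$, the first factor acting standardly on $\Cee^2$ and the second trivially; $G$ is simply connected. With $T_1,T_2$ generating maximal tori of the two factors and $\alpha$ irrational, the subalgebra $\fk=(\Cee\oplus 0)+\Ree\,(T_1,\alpha T_2)$ is isomorphic to $\fe$, yet its integral subgroup winds densely into the closed subgroup $(\Cee\oplus 0)\rtimes(\Tee\times\Tee)$, so $\wbar{\exp\fk}$ has Lie algebra $\fe\oplus\Ree\supsetneq\fk$. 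Nor does the minimal-dimension device close the hole as stated: if $H$ is a closed connected non-abelian subgroup of least dimension and $\fk'$ is a model subalgebra of $\mathrm{Lie}(H)$ (note you must re-apply Proposition \ref{prop:basiclie} to $\mathrm{Lie}(H)$, not reuse the original $\fk$), minimality only gives that the integral subgroup of $\fk'$ is \emph{dense} in $H$; one must still exclude $\fk'\subsetneq\mathrm{Lie}(H)$, and your observation that the five models admit no proper non-abelian subalgebras constrains subalgebras of $\fk'$ --- the wrong side of the inclusion. Repairing this along your lines would require the theorem that the closure of an analytic subgroup adds only central directions, $\mathrm{Lie}(H)=\fk'+\mathfrak{z}(\mathrm{Lie}(H))$, followed by a covering-group analysis manufacturing a smaller closed non-abelian subgroup; none of this is in your sketch. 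The paper avoids the issue entirely: in simply connected $G$ the Levi decomposition yields closed, simply connected integral groups $S$ and $R$; the solvable models produced by Proposition \ref{prop:basiclie} lie inside $\fr$ or inside $\fa+\fn$ (Iwasawa decomposition of $\fs$), and every integral subgroup of a simply connected solvable group (such as $R$ or $AN$) is automatically closed and simply connected, while $\fsu(2)$ always integrates to a compact, hence closed, subgroup. Finally, your nilradical shortcut for the Heisenberg case also needs justification: a nilpotent subalgebra need not lie in the nilradical (the strictly upper-triangular Heisenberg algebra inside $\fsl(3,\Ree)$, whose nilradical is zero, is an example), so one must check that the Heisenberg case of Proposition \ref{prop:basiclie} arises only from a nilpotent term of the derived series of $\fr$, which is a nilpotent \emph{ideal} of $\fg$.
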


\begin{proof}
The result (i) $\Rightarrow$ (ii) is  Theorem \ref{theo:wassad}, while (iii) $\Rightarrow$ (i)
is well-known, i.e.\ $A(G)\cong L^1(\what{G})$, in this case.  Hence it remains
to prove that (ii) $\Rightarrow$ (iii).  We will assume that $G$ is non-abelian, and show that
$\check{\Del}_G$ is not of local synthesis for $G\times G$.

All Lie theoretic concepts and nomenclature, used in this proof, may be found in \cite{hilgertnB}.
We will show that when $G$ is non-abelian, then $\check{\Del}_G$ fails to be of local synthesis
for $G\times G$.  Thanks to Theorem \ref{theo:discnorm},
and the fact that any $G\cong\widetilde{G}/\Gam$ for a simply connected connected Lie
group $\widetilde{G}$ and a discrete central subgroup  $\Gam$ of $\widetilde{G}$, we may
restrict ourselves to the case of simply connected groups.

Let $G$ be simply connected with Lie algebra $\fg$.    The Levi decomposition $\fg=\fs+\fr$ begets
respective integral subgroups $S$ and $R$ of $G$, with $S$ semisimple and $R$ normal, and
hence, essentially by the smooth splitting theorem, each is closed and simply connected.  
Given a Lie subalgebra $\fm$ of $\fg$, in particular one of the algebras arising from 
Proposition \ref{prop:basiclie}, above, we let $M$ be the integral subgroup generated by $\fm$.
If $\fm=\fsu(2)$, so $\fm\subset\fs$, then 
$M\cong\su(2)/C$, where $C$ is a subgroup of the centre $\{\pm 1\}$, 
and hence $M$ is closed.  If $\fm\subset\fs$ and is non-abelian 
and solvable, then $M\subseteq AN$ (Iwasawa decomposition of $S$); or if $\fm\subseteq\fr$, then 
$M\subseteq R$.   Any integral subgroup of a simply connected solvable group is closed and simply 
connected.  Thus, corresponding to each of the solvable 
Lie algebras $\ff$, $\fe$, $\fg_\theta$ or $\fh$, as 
obtained in Proposition \ref{prop:basiclie}, above, we see that $M$ is closed and isomorphic to one of
the affine motion group $F$, the simply connected cover $\widetilde{E}$ of the Euclidean 
motion group $E$, G\'{e}laud's group $G_\theta$, or the Heisenberg group $\Hee$.   

Proposition \ref{prop:sutwo}, Theorem \ref{theo:euclmot}, Theorem \ref{theo:heisenberg},
Theorem \ref{theo:grelaud}, and Theorem \ref{theo:affmot}, 
and then Lemma \ref{lem:basic}, tells us that $\check{\Del}_H$ fails to be a set of (local) synthesis
for $H\times H$, for each of $H=\su(2),\,E,\,\Hee^r,\,G_\theta$ and $F$.
Then, Theorem \ref{theo:discnorm} tells us the same for $H=\su(2)/C$, $\widetilde{E}$ and $\Hee$.
Hence applying Theorem \ref{theo:restriction} to the subgroup $M$ of $G$, above, we obtain (ii).  
\end{proof}

Let $G$ be a locally compact group.
The following is an immediate consequence of the restriction theorem for closed
subgroups $H$, i.e.\ $R_H(A(G))=A(H)$ (see the comments before Theorem \ref{theo:restriction});
and the fact that any quotient of a commutative weakly amenable
Banach algebra is again weakly amenable.

\begin{corollary}\label{cor:main}
Let $G$ be a locally compact group. If $A(G)$ is weakly amenable,
then any connected Lie subgroup of $G$ is abelian.
\end{corollary}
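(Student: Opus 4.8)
The plan is to deduce this directly from the Lie-group case, Theorem \ref{theo:main}, via the restriction theorem. First I would take a closed connected Lie subgroup $H$ of $G$ and recall, from the comments preceding Theorem \ref{theo:restriction}, that the restriction map $R_H:A(G)\to A(H)$ is a surjective homomorphism of commutative Banach algebras, since $R_H(A(G))=A(H)$. Thus $A(H)$ is a quotient of $A(G)$. Because $A(G)$ is commutative and weakly amenable, and weak amenability of commutative Banach algebras passes to continuous homomorphic images (a bounded point derivation on the quotient pulls back to one on $A(G)$, forcing it to vanish), the algebra $A(H)$ is weakly amenable as well. Since $H$ is a connected Lie group, the implication (i) $\Rightarrow$ (iii) of Theorem \ref{theo:main} then forces $H$ to be abelian. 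This is the whole argument for closed connected Lie subgroups, and it is where essentially all the work has already been carried out by the earlier sections.

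For a general connected Lie subgroup $H$ the restriction theorem does not apply verbatim, so some care is needed to reduce to the closed case; this is the one genuine obstacle. I would argue contrapositively: if some connected Lie subgroup $H$ is non-abelian, then its Lie algebra $\fh$ is non-abelian, so Proposition \ref{prop:basiclie} places one of $\fsu(2),\ff,\fe,\fg_\theta,\fh$ inside $\fh$, and the corresponding integral subgroup $M\subseteq H\subseteq G$ is an immersed copy of one of $\su(2)/C,\,\widetilde{E},\,G_\theta,\,\Hee,$ or $F$, exactly as in the proof of Theorem \ref{theo:main}. The task is then to extract from $M$ an honest closed connected non-abelian Lie subgroup of $G$, after which the first paragraph applies and contradicts weak amenability of $A(G)$.

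The hard part will be precisely this closedness question: in a general locally compact $G$ an immersed Lie subgroup need not be closed, and its closure need not be a Lie group, so Theorem \ref{theo:main} cannot be invoked for the closure directly. If one adopts the convention (consistent with the abstract) that ``connected Lie subgroup'' means a closed one, the issue disappears and the corollary is immediate from the two cited facts. Otherwise I would pass to the closure $\overline{H}$, use surjectivity of $R_{\overline{H}}:A(G)\to A(\overline{H})$ to retain weak amenability of $A(\overline{H})$, and then appeal to the structure theory of $\overline{H}$ to locate a closed connected non-abelian Lie subgroup whenever $H$ is non-abelian; either route reduces matters to the closed case settled above.
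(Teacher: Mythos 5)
Your first paragraph is precisely the paper's argument: the corollary is intended for \emph{closed} connected Lie subgroups (as the abstract makes explicit), and the paper deduces it in one line from the restriction theorem $R_H(A(G))=A(H)$ together with the fact that a quotient of a commutative weakly amenable Banach algebra is weakly amenable, then applies Theorem \ref{theo:main}. One correction, though: your parenthetical justification of the hereditary property is wrong. Weak amenability is not detected by point derivations (absence of bounded point derivations is a strictly weaker property). The correct argument is that a bounded derivation $D:A(H)\to A(H)^*$ makes $A(H)^*$ a symmetric Banach $A(G)$-module via $R_H$, so $D\circ R_H$ is a bounded derivation of $A(G)$ into a symmetric module, hence zero by the Bade--Curtis--Dales characterization of weak amenability for commutative algebras; surjectivity of $R_H$ then forces $D=0$.

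More seriously, your proposed treatment of non-closed (immersed) connected Lie subgroups cannot be completed by the methods of this paper. You suggest passing to the closure $\overline{H}$ and then ``locating a closed connected non-abelian Lie subgroup whenever $H$ is non-abelian.'' The paper's final Example shows this strategy fails in general: the group $\wbar{E}=\Cee\rtimes\Ree^{ap}$ contains a dense immersed copy of the non-abelian Lie group $\widetilde{E}=\Cee\rtimes\Ree$ (since $\Ree$ embeds densely in $\Ree^{ap}$), and yet $\wbar{E}$ has \emph{no} non-abelian closed connected Lie subgroups; moreover the paper explicitly states that it does not know whether $A(\wbar{E})$ is weakly amenable. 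So the version of the corollary for immersed subgroups is not a consequence of the paper's results (it is effectively open), and any reduction of it to closed subgroups of the closure is doomed. The statement should simply be read with ``connected Lie subgroup'' meaning closed; on that reading your first paragraph, with the corrected justification above, is the whole proof and coincides with the paper's.
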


\begin{example}
Let us consider an example of a non-abelian solvable connected group which contains no
non-abelian closed connected Lie subgroups.
We consider a group related to the simply connected covering group, 
$\widetilde{E}=\Cee\rtimes\Ree$, of the Euclidean motion group.

We let $\Ree^{ap}$ denote the almost periodic compactification
of the real line, and for each $r$ in $\Ree$ we let 
$\chi_r:\Ree^{ap}\to\Tee$ be the unique continuous character extending
$t\mapsto e^{itr}:\Ree\to\Tee$. We observe that the dual group is given by 
$\widehat{\Ree^{ap}}=\{\chi_r:r\in\Ree\}$ and is isomorphic to the discretized reals, $\Ree_d$.
If $K$ is a closed connected Lie subgroup of $\Ree^{ap}$, then $K\cong\Tee^n$, where
$n=0,1,2,\dots$, so $\widehat{K}\cong\Zee^n$.  On the other hand
$\what{K}$ is a quotient of $\Ree_d$, and hence is either trivial or divisible.  Thus $K=\{1\}$, and
we conclude that $\Ree^{ap}$ contains no non-trivial closed Lie subgroups. 

Now let $\wbar{E}=\Cee\rtimes\Ree^{ap}$ with multiplication given by
\[
(x,\zeta)(x',\zeta')=(x+\chi_1(\zeta)x',\zeta\zeta').
\]
We may think of this group as a partial compactification of $\widetilde{E}$ along its quotient subgroup.
We note that $q(x,\zeta)=(x,\chi_1(\zeta))$ defines a quotient homomomorphism from $\wbar{E}$
onto $E$.
For notation convenience we identify $\Ree^{ap}\cong\{0\}\rtimes\Ree^{ap}$ in $\wbar{E}$, 
and $\Tee\cong\{0\}\rtimes\Tee$ in $E$, and we identify $\Cee$ with $\Cee\rtimes\{1\}$ in
either $\wbar{E}$, or in $E$, as should be clear by context.  Notice that $q|_\Cee$ is 
a homeomorphism.

We now show that $\wbar{E}$ admits no non-abelian closed connected Lie subgroups.
We first note that the proper Lie subalgebras of $\fe$ are either one-dimensional, the two-dimensional
ideal $\fn=\Ree X_1+\Ree X_2$, or $\{0\}$.  Since every closed connected subgroup
of $E$ corresponds to a Lie subalgebra of $\fe$, we find that the one-dimensional
closed connected subgroups are of the form
\begin{align*}
M_{S+X}&=\exp(\Ree(S+X))=\{((z-1)x,z):z\in\Tee\} \\
\text{and }L_X&=\exp(\Ree X)=\{(tx,1):t\in\Ree\}
\end{align*}
where $X\in\fn$ with $\exp X=(x,1)$, while the only two-dimensional closed connected subgroup is
$\Cee$.  Taking $q^{-1}(M)$ for each subgroup $M$ of $E$, listed above, we get
\[
\wbar{M}_{S+X}=\{((\chi_1(\zeta)-1)x,\zeta):\zeta\in\Ree^{ap}\},\;
L_X\rtimes\ker\chi_1\text{ and }\Cee\rtimes\ker\chi_1.
\]
Let $H$ be a proper closed connected subgroup of $\wbar{E}$ and $H_0=H\cap\Cee$.  
Notice that in $E$, $q(H_0)=q(H)\cap\Cee\cong H_0$.  If $H_0=\Cee$ then
for $(x,\zeta)$ in $H$ we have $(0,\zeta)=(-x,1)(x,\zeta)\in H\cap\Ree^{ap}$,
so the image of $H$ under the quotient map $(x,\zeta)\mapsto\zeta$ is closed, and hence either
the trivial group $\{1\}$ or is non-Lie.  If $H_0\subsetneq\Cee$, then 
$\wbar{q(H)}\cap\Cee\cong H_0\subsetneq\Cee$, and it follows that $H$ is a subgroup
of one of the abelian groups $\wbar{M}_{S+X}$, $L_X\rtimes\ker\chi_1$ or $\Ree^{ap}$, 
and hence is abelian.

At present, we are aware of no method for determining if $A(\wbar{E})$ is weakly amenable.
\end{example}

{\bf Acknowledgement.}  We are grateful to S{\o}ren Knudby for pointing out a flaw in an earlier version of the proof of Theorem \ref{theo:main}, and for guidance in correcting it. We also thank Yemon Choi for pointing out an error in a proposition which purported to reduce the problem to the case of pro-solvable $G_e$. We realized there was a serious gap in the proof and have deleted the proposition from the final version of the paper.

Addresses:
\linebreak
 {\sc
Department of Mathematical Sciences  and Research Institute of Mathematics, Seoul National University,
Gwanak-ro 1, Gwanak-gu, Seoul 08826, Republic of Korea

\medskip\noindent
Institut \'{E}lie Cartan de Lorraine,
Universit\'{e} de Lorraine -- Metz,
B\^{a}timent A, Ile du Saulcy, F-57045 Metz, France

\medskip\noindent
Department of Mathematics and Statistics, University of Saskatchewan,
Room 142 McLean Hall, 106 Wiggins Road
Saskatoon, SK, S7N 5E6, Canada

\medskip\noindent
Department of Pure Mathematics, University of Waterloo,
Waterloo, ON, N2L 3G1, Canada}

\medskip
Email-adresses:
\linebreak
{\tt hunheelee@snu.ac.kr}
\linebreak {\tt  ludwig@univ-metz.fr}
\linebreak {\tt samei@math.usask.ca}
\linebreak {\tt nspronk@uwaterloo.ca}

\end{document}